\newtheorem{theorem}{Theorem}[section]
\newtheorem{lemma}[theorem]{Lemma}
\newtheorem{proposition}[theorem]{Proposition}
\newtheorem{corollary}[theorem]{Corollary}
\theoremstyle{definition} 
\newtheorem{definition}[theorem]{Definition}
\newtheorem{definition-proposition}[theorem]{Definition-Proposition} 
\newtheorem{remark}[theorem]{Remark}
\newtheorem{example}[theorem]{Example}
\newtheorem{notation}[theorem]{Notation}
\newcommand{\LC}{\scriptscriptstyle \text{LC}} 
\newcommand{\CR}{\scriptscriptstyle \text{CR}}
\newcommand{\pss}{\big[\!\big[s^0,\ldots,s^N\big]\!\big]}
\newcommand{\abs}{\mathrm{abs}}
\newcommand{\cA}{\mathcal{A}}
\newcommand{\tcA}{\widetilde{\cA}} 
\newcommand{\cAabs}{\mathcal{A}^{\text{\rm \tiny abs}}}
\newcommand{\cAabsformal}
{\mathcal{A}^{\text{\rm \tiny abs}}_{\text{\rm \tiny formal}}}
\newcommand{\cAabsan}
{\mathcal{A}^{\text{\rm \tiny abs}}_{\text{\rm \tiny an}}}
\newcommand{\A}{\mathbb A} 
\newcommand{\ba}{\mathbf{a}} 
\newcommand{\Aut}{\operatorname{Aut}}
\newcommand{\bb}{\mathbf{b}}
\newcommand{\bc}{\mathbf{c}} 
\newcommand{\C}{\mathbb C}
\newcommand{\Cstar}{\C^{\times}}
\newcommand{\diag}{\operatorname{diag}} 
\newcommand{\ev}{\operatorname{ev}}
\newcommand{\End}{\operatorname{End}}
\newcommand{\cE}{\mathcal{E}} 
\newcommand{\cF}{\mathcal{F}} 
\newcommand{\tcF}{\widetilde{\cF}}
\DeclareMathOperator{\FractionField}{Frac}
\newcommand{\bbf}{\mathbf{f}} 
\newcommand{\Fock}{\mathfrak{Fock}}
\newcommand{\FJRW}{\operatorname{FJRW}} 
\newcommand{\sdet}{\operatorname{sdet}} 
\newcommand{\bg}{\mathbf{g}} 
\newcommand{\Hom}{\operatorname{Hom}}
\newcommand{\cH}{\mathcal{H}} 
\DeclareMathOperator{\Id}{Id}
\newcommand{\iu}{\mathtt{i}}
\newcommand{\cM}{\mathcal{M}} 
\newcommand{\cMbar}{\ov{\cM}}
\newcommand{\bm}{\mathbf{m}}
\DeclareMathOperator{\NE}{NE}
\newcommand{\cO}{\mathcal{O}}
\newcommand{\Proj}{\mathbb P}
\newcommand{\bp}{\mathbf{p}}
\newcommand{\tbp}{\tilde{\bp}}
\newcommand{\bq}{\mathbf{q}}
\newcommand{\tbq}{\tilde{\bq}} 
\newcommand{\Q}{\mathbb Q}
\newcommand{\Res}{\operatorname{Res}}
\newcommand{\Spf}{\operatorname{Spf}}   
\newcommand{\bt}{\mathbf{t}}
\newcommand{\cT}{\mathcal{T}}
\newcommand{\bv}{\mathbf{v}}
\newcommand{\bx}{\mathbf{x}} 
\newcommand{\by}{\mathbf{y}}
\newcommand{\Z}{\mathbb Z}
\newcommand{\cZ}{\mathcal{Z}} 
\newcommand{\unit}{\boldsymbol{1}} 
\newcommand{\cS}{\Upsilon} 
\newcommand{\ov}{\overline}
\def\pair#1#2{\langle #1,#2\rangle}
\def\Pair#1#2{\left\langle #1,#2\right\rangle}
\def\parfrac#1#2{\frac{\partial{#1}}{\partial #2}}
\def\corr#1{\left\langle #1 \right\rangle} 
\def\leftsub#1{{}_{#1}}
\title[On the Convergence of Gromov--Witten Potentials]{On the
  Convergence of Gromov--Witten Potentials and Givental's Formula}
\author{Tom Coates}
\email{t.coates@imperial.ac.uk}
\author{Hiroshi Iritani}
\email{iritani@math.kyoto-u.ac.jp} 
\begin{document} 
\begin{abstract} 
  Let $X$ be a smooth projective variety.  The Gromov--Witten
  potentials of $X$ are generating functions for the Gromov--Witten
  invariants of $X$: they are formal power series, sometimes in
  infinitely many variables, with Taylor coefficients given by
  Gromov--Witten invariants of $X$.  It is natural to ask whether
  these formal power series converge.  In this paper we describe and
  analyze various notions of convergence for Gromov--Witten
  potentials.  Using results of Givental and Teleman, we show that if
  the quantum cohomology of $X$ is analytic and generically semisimple
  then the genus-$g$ Gromov--Witten potential of $X$ converges for all
  $g$.  We deduce convergence results for the all-genus Gromov--Witten
  potentials of compact toric varieties, complete flag varieties, and
  certain non-compact toric varieties.
\end{abstract} 

\maketitle 

\section{Introduction} 

Let $X$ be a smooth projective variety.  The total descendant
potential of $X$ is a generating function for the Gromov--Witten
invariants of $X$.  It is a formal power series $\cZ_X$ in $\hbar$,
$\hbar^{-1}$, and infinitely-many variables $t^\alpha_k$, $0 \leq
\alpha \leq N$, $0 \leq k < \infty$, with Taylor coefficients given by
Gromov--Witten invariants of $X$.  Here $t_0, t_1, t_2, \ldots$ is an
infinite sequence of cohomology classes on $X$, $t_k = t^0_k \phi_0 +
\cdots + t^N_k \phi_N$ is the expansion of $t_k$ in terms of a basis
$\{\phi_\alpha\}$ for $H^\bullet(X)$, and:
\[
\cZ_X = \exp \left( \sum_{g \geq 0} \hbar^{g-1} \cF^g_X \right)
\]
where $\cF^g_X$ is a generating function for genus-$g$ Gromov--Witten
invariants.  It is known that $\cZ_X$ does not converge\footnote{$\cZ_X$
  should be regarded as an asymptotic expansion in $\hbar$.} as a
series in $\hbar$ and $\hbar^{-1}$, but it is natural to ask whether
the formal power series $\cF^g_X$ converge.  This question is
particularly relevant in light of work by Ruan and his
collaborators on Gromov--Witten theory and birational geometry.  If $X
\dashrightarrow Y$ is a crepant birational map between smooth
projective varieties (or orbifolds) then, very roughly speaking, the
total descendant potentials $\cZ_X$ and $\cZ_Y$ are conjectured to be
related by analytic continuation in the parameters $t_i^\alpha$.
Implicit here, then, is the conjecture that the power series defining
$\cF^g_X$ and $\cF^g_Y$ converge.

There are several different notions of convergence for a power series
in infinitely-many variables.  We say that the total descendant
potential $\cZ_X$ is NF-convergent (see
Definition~\ref{def:descendant-converge} below) if each genus-$g$
descendant potential $\cF^g_X$ converges on an infinite-dimensional
polydisc of the form shown in equation~\ref{eq:convergence-domain-cFg}
below.  This implies that each $\cF^g$ defines a holomorphic function
on a neighbourhood of zero in an appropriate nuclear Fr\'echet space:
see Remark~\ref{rem:holomorphy}.  The main result of this paper
(Theorem~\ref{thm:maintheorem} below) is that if $X$ is a projective
variety such that the quantum cohomology of $X$ is analytic and
generically semisimple, then the total descendant potential $\cZ_X$ is
NF-convergent.

The quantum cohomology of $X$ is a family of algebra structures on
$H^\bullet(X)$ parametrized by a point $t \in H^\bullet(X)$.  The
structure constants of the quantum cohomology algebra are formal power
series in $t^\alpha$, $0 \leq \alpha \leq N$, where $t= t^0 \phi_0 +
\cdots + t^N \phi_N$ is the expansion of $t$ with respect to a basis
$\{\phi_\alpha\}$ for $H^\bullet(X)$, with Taylor coefficients given
by genus-zero Gromov--Witten invariants of $X$: see
\S\ref{sec:convergence}.  We consider three conditions on the
Gromov--Witten invariants of $X$:
\begin{description}
\item[Formal Semisimplicity] (see equation
  \ref{eq:formalsemisimplicity}), which roughly speaking states that
  the quantum cohomology algebra of $X$ is semisimple at the generic
  point of a formal neighbourhood of the large-radius limit point;
\item[Genus-Zero Convergence] (see equation
  \ref{eq:genuszeroconvergence}), which roughly speaking states that
  the power series defining the quantum cohomology algebra converge to
  give analytic functions of $t^0,\ldots,t^N$; and
\item[Analytic Semisimplicity] (see equation
  \ref{eq:analyticsemisimplicity}) which asserts that the resulting
  analytic family of algebras is semisimple for generic $t \in
  H^\bullet(X)$.
\end{description}
Formal Semisimplicity and Genus-Zero Convergence together imply
Analytic Semisimplicity, and Genus-Zero Convergence and Analytic
Semisimplicity together imply Formal Semisimplicity.

\begin{theorem}
 \label{thm:maintheorem}
 Let $X$ be a smooth projective variety that satisfies Formal
 Semisimplicity, Genus-Zero Convergence, and Analytic Semisimplicity.
 The total descendant potential $\cZ_X$ is NF-convergent in the sense
 of Definition~\ref{def:descendant-converge}..
\end{theorem}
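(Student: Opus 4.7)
The plan is to combine Givental's formula for the higher-genus potentials of a semisimple Gromov--Witten theory, in the form established by Teleman, with careful tracking of convergence through the Givental quantization. The key observation is that at each fixed genus the quantization is a finite sum over stable graphs, so NF-convergence of $\cZ_X$ reduces to the analyticity of the structural ingredients (canonical coordinates, $R$-matrix, $S$-matrix) together with the convergence of the Kontsevich--Witten tau function.

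First, by Analytic Semisimplicity I fix a point $\tau^\ast \in H^\bullet(X)$ at which the analytic quantum product is semisimple. Then by Genus-Zero Convergence and standard results on analytic semisimple Frobenius manifolds, on a complex neighborhood $U$ of $\tau^\ast$ we obtain analytic canonical coordinates $u_i(\tau)$, an analytic normalized idempotent frame $\Psi(\tau)$, an analytic asymptotic $R$-matrix $R(z,\tau) = \Id + R_1(\tau) z + R_2(\tau) z^2 + \cdots$, and the analytic fundamental solution $S(z,\tau)$ of the quantum connection. At the same time, Formal Semisimplicity guarantees that the corresponding Givental construction can be carried out on formal power series around the large-radius limit $\tau = 0$, so the analytic identity at $\tau^\ast$ may be compared to $\cZ_X$ coefficient-by-coefficient in the formal variables.

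Next, by Givental--Teleman the ancestor potential at $\tau \in U$ is
\[
\cA_\tau \;=\; \hat\Psi_\tau\,\hat R_\tau\,\prod_{i}\cT(u_i(\tau)),
\]
where each $\cT$ is a translated Kontsevich--Witten tau function. In each fixed genus $g$ the quantization $\hat R_\tau$ expands as a \emph{finite} sum over stable graphs of genus at most $g$, with vertices weighted by Kontsevich--Witten correlators and edges/legs weighted by Taylor coefficients of $R(z,\tau)$ and entries of $\Psi(\tau)$; moreover the dimensional bound $\psi^k|_{\overline{M}_{g,n}} = 0$ for $k > 3g-3+n$ forces polynomial dependence on the higher ancestor variables at each genus. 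Combining the NF-convergence of Kontsevich--Witten with the analyticity of these finitely many ingredients yields NF-convergence of $\cA_\tau$ on a polydisc around $\tau^\ast$.

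Finally I pass from ancestor to descendant via the quantized inverse $S$-matrix together with a Fock-space translation encoded in $S(z,\tau)\unit$. Both operations, built from the analytic matrix entries of $S$, act at each genus as a polynomial manipulation on the Taylor coefficients in the descendant variables $t^\alpha_k$, and so preserve NF-convergence in principle. The hardest step will be this last one: verifying carefully that the quantization $\hat S^{-1}$ and the Fock-space translation respect the precise polydisc domain demanded by Definition~\ref{def:descendant-converge}, in particular that the polydisc radii decay in $k$ at the rate required for NF-convergence, and that the resulting analytic-in-$\tau$ identity near $\tau^\ast$ can be transported back to a neighborhood of the large-radius limit (via analytic continuation plus Formal Semisimplicity) without losing the polydisc structure. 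The remaining work is essentially bookkeeping to reconcile the Givental quantization in infinitely many variables with the genus-by-genus truncation.
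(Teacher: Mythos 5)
Your proposal has the right overall architecture, which matches the paper's: use Givental--Teleman to control the ancestor potential, then pass to descendants. But you treat the two decisive steps as ``bookkeeping,'' and in both places there is a genuine missing idea.

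\textbf{Formal-to-analytic comparison.} You fix an analytic semisimple point $\tau^\ast$ and say the analytic identity ``may be compared to $\cZ_X$ coefficient-by-coefficient.'' This glosses over the real issue. The Givental--Teleman formula gives a \emph{formal} identity over the field $k=\overline{\operatorname{Frac}\,\Lambda[\![t]\!]}$ (where Formal Semisimplicity lives), while the analytic semisimple locus lives over convergent power series. To conclude that the formal ancestor potential is actually convergent near the \emph{non-semisimple} large-radius limit point, the paper sets up a chain of ground fields $k_1,\dots,k_5$ and proves that an element simultaneously formal and algebraic over the convergent power series ring is convergent (this is Lemma~\ref{lem:k3k4k5}, whose proof uses Weierstrass preparation plus Aroca's Puiseux-series theorem). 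It then extends the resulting analytic functions from the semisimple locus $B_\epsilon^{\mathrm{ss}}$ across the discriminant to a full polydisc $B_{\epsilon'}$ by shrinking $\epsilon'$ so that no component of the non-semisimple locus away from the origin meets $B_{\epsilon'}$. None of this is automatic and your sketch does not supply it.

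\textbf{Ancestor-to-descendant.} You describe this as ``a polynomial manipulation on the Taylor coefficients'' and say the rest is bookkeeping. This is the step that actually fails as stated. The Kontsevich--Manin ancestor--descendant relation is $\cF^g(\bq)=\bar\cF^g_t(\bx)$ under $\bq(z)=[M(t,z)\bx(z)]_+$, where $t=t(\bq)$ is determined \emph{implicitly} by the condition that the $z^0$-coefficient of $L(t,z)\bq(z)$ vanishes. Inverting this coordinate change in infinitely many variables is an inverse-function-theorem problem on a nuclear Fr\'echet space, not a polynomial substitution; the derivative loses derivatives (one factor of $z$), so the Banach-space inverse function theorem does not apply. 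The paper resolves this with the Nash--Moser inverse function theorem, establishing smooth tameness of both the map $(\bx,t,Q)\mapsto(\bq,Q)$ and its inverse, and then uses Lemma~\ref{lem:Frechet-Banach} to convert holomorphy on the Fr\'echet space $\cH_+$ into absolute convergence on a weighted $l_\infty$-Banach polydisc of the required shape~\eqref{eq:convergence-domain-cFg}. Without Nash--Moser (or an equivalent tame estimate), the decay in $k$ of the polydisc radii --- which you correctly identify as the crux --- cannot be established. You should also note that the uniformity of the constants $C,\epsilon$ over all $g$ comes from the tame estimates being genus-independent; this is not addressed in your sketch.

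In short: the skeleton is right, but the two load-bearing pieces of machinery --- the algebraicity-implies-convergence lemma (Aroca) for the formal/analytic transfer, and the Nash--Moser theorem plus tame Fr\'echet estimates for the ancestor-to-descendant change of variables --- are exactly what is missing, and they are not bookkeeping.
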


Theorem~\ref{thm:maintheorem} is proved in
Section~\ref{sec:statements} below.  It has the following immediate
consequences.

\begin{corollary}
  \label{cor:toricflag}
  Let $X$ be a compact toric variety or a complete flag variety.  The
  total descendant potential $\cZ_X$ is NF-convergent in the sense of
  Definition~\ref{def:descendant-converge}.
\end{corollary}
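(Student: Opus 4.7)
The plan is to apply Theorem~\ref{thm:maintheorem}, which reduces the corollary to verifying the three hypotheses---Formal Semisimplicity, Genus-Zero Convergence, and Analytic Semisimplicity---for $X$ in each of the two cases. Since any two of these conditions imply the third, it is enough to establish two of them per case.

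For compact toric varieties $X$, I would verify Genus-Zero Convergence and Analytic Semisimplicity using Givental's toric mirror theorem. The mirror theorem expresses the $J$-function in terms of an explicit hypergeometric $I$-function that converges on a polydisc near the large-radius limit; combined with the divisor equation and Frobenius reconstruction from the small $J$-function, this extends to analyticity of the big quantum product on a neighbourhood of $0 \in H^\bullet(X)$, giving Genus-Zero Convergence. The same mirror theorem identifies the big quantum cohomology at a parameter $t$ with the Jacobian ring of a Landau--Ginzburg Laurent polynomial $W_t$; for generic $t$, $W_t$ has only nondegenerate critical points, so its Jacobian ring is \'etale over $\C$ and hence semisimple, yielding Analytic Semisimplicity. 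The third condition, Formal Semisimplicity, is then free.

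For complete flag varieties $X = G/B$, I would verify Formal Semisimplicity and Genus-Zero Convergence. Formal Semisimplicity would follow from generic semisimplicity of the small quantum cohomology of $G/B$, via Kostant's presentation of the small quantum cohomology ring as the coordinate ring of a regular conjugacy class in the Langlands dual group; since semisimplicity is an open condition and the small quantum product is the restriction of the big quantum product to the large-radius locus, this propagates to the full formal neighbourhood. For Genus-Zero Convergence, positivity of $c_1(G/B)$ combined with the dimension axiom ensures that only finitely many effective curve classes contribute to each Taylor coefficient of the big quantum product, giving polynomial dependence on the Novikov variables; standard asymptotic bounds on the resulting genus-zero Gromov--Witten numbers then yield convergence in the primary variables. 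Analytic Semisimplicity again comes for free.

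The main obstacle in both settings is Genus-Zero Convergence in the directions corresponding to non-divisor cohomology classes, where the divisor equation does not reduce the problem directly to analyticity of the small $J$-function. It is precisely here that the mirror theorem (for toric varieties) and the effectivity and dimension estimates together with finite generation of $QH^\bullet(G/B)$ over the small quantum parameters (for flag varieties) do the essential work. Once both sets of hypotheses are in hand Theorem~\ref{thm:maintheorem} delivers NF-convergence of $\cZ_X$ with no further input.
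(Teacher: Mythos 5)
Your reduction to Theorem~\ref{thm:maintheorem} and the toric case are essentially the paper's argument: mirror symmetry for toric complete intersections (Givental, Hori--Vafa, Iritani) gives both Genus-Zero Convergence and Analytic Semisimplicity, with the Jacobian-ring-of-$W_t$ picture supplying semisimplicity at generic $t$. That part is fine and matches the paper.

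The flag case has a genuine gap. Your route to Genus-Zero Convergence is: Fano-ness forces polynomiality of each coefficient in the Novikov variables, and then ``standard asymptotic bounds on the resulting genus-zero Gromov--Witten numbers yield convergence in the primary variables.'' No such standard bounds exist. Fano-ness controls the Novikov direction coefficient-by-coefficient, but it does not control the growth in $n$ of the $n$-point primary invariants $\corr{t,\ldots,t}_{0,n,d}$ that feed into the non-divisor directions $t^{r+1},\ldots,t^N$, and there is no general estimate that makes the double series in $n$ and $d$ converge. This is exactly the obstacle you flag yourself in your final paragraph, and appealing to unspecified asymptotics does not cross it. The paper's actual argument is different: mirror symmetry (Givental, Kim) gives convergence of the \emph{small} quantum cohomology only, and one then invokes reconstruction theorems for logarithmic Frobenius manifolds (Reichelt; \cite{Iritani:convergence}*{Proposition~5.8}). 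Because $H^\bullet(G/B)$ is generated in degree two, those theorems uniquely determine the big Frobenius structure from the small one and, crucially, propagate analyticity from the divisor directions to all of $H^\bullet(G/B)$. Your phrase ``finite generation of $QH^\bullet(G/B)$ over the small quantum parameters'' gestures at this, but the mechanism you actually offer (asymptotic bounds) is not a substitute for the reconstruction theorem, which is where the real work happens. The Kostant citation for (formal or analytic) semisimplicity is correct and matches the paper.
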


\begin{proof}
  By Theorem~\ref{thm:maintheorem}, it suffices to show that $X$
  satisfies Genus-Zero Convergence and Analytic Semisimplicity.  If
  $X$ is a compact toric variety then this follows from mirror
  symmetry \citelist{\cite{Givental:toric} \cite{Hori--Vafa}
    \cite{Iritani:convergence}}.  If $X$ is a complete flag variety
  then this follows from mirror symmetry
  \citelist{\cite{Givental:flag}\cite{Kim}}, reconstruction theorems
  for logarithmic Frobenius manifolds
  \citelist{\cite{Reichelt}\cite{Iritani:convergence}*{Proposition~5.8}
  }, and the work of Kostant \cite{Kostant}.
\end{proof}

Theorem~\ref{thm:maintheorem} also implies the NF-convergence of the
total descendant potential $\cZ_X$ when $X$ is the total space of a
direct sum of negative line bundles over a compact toric variety.
This includes the case where $X = K_Y$ is the total space of the
canonical line bundle over a compact Fano toric variety $Y$.

\begin{corollary}
  \label{cor:local}
  Let $Y$ be a compact toric variety and let $X$ be the total space of a direct
  sum $E = \bigoplus_{j=1}^{j=r} E_j$ of line bundles $E_j$ over $Y$
  such that $c_1(E_j) \cdot d < 0$ whenever $d$ is the degree of a
  holomorphic curve in $Y$.  The total descendant potential
  $\cZ_X$ is NF-convergent in the sense of
  Definition~\ref{def:descendant-converge}.
\end{corollary}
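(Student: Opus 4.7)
The plan is to apply Theorem~\ref{thm:maintheorem} to $X$, following the same reduction used in Corollary~\ref{cor:toricflag}: since Genus-Zero Convergence and Analytic Semisimplicity together imply Formal Semisimplicity, it suffices to verify those two conditions for the total space $X$.

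For Genus-Zero Convergence, I would realise the Gromov--Witten theory of the non-compact $X$ as the non-equivariant limit of the $E$-twisted Gromov--Witten theory of the compact toric variety $Y$; the negativity hypothesis is precisely what makes any stable map to $X$ factor through the zero section, so this identification is well defined. Givental's mirror theorem for concave bundles over toric targets then expresses the $I$-function of $X$ as a hypergeometric modification of the $I$-function of $Y$: each line bundle $E_j$ contributes, in the coefficient of Novikov variable $Q^d$, a finite product of factors $(\rho_j + k z)^{\pm 1}$ with $\rho_j = c_1(E_j)$ and $k$ ranging over a bounded set of integers determined by $c_1(E_j) \cdot d$. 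Because the range of $k$ is finite on each Novikov coefficient, the $I$-function of $X$ converges wherever the $I$-function of $Y$ does; combined with the convergence of the $I$-function for compact toric varieties established in \cite{Iritani:convergence} and the analyticity of the mirror map, this yields Genus-Zero Convergence for $X$.

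For Analytic Semisimplicity, the strategy is to compare the small quantum product of $X$ with that of $Y$. Under the pullback identification $H^\bullet(X) \cong H^\bullet(Y)$, the $E$-twisted quantum product specialises, in the non-equivariant limit, to a deformation of the small quantum product of $Y$, with the deformation direction at the classical limit governed by the Euler class $e(E)$. Since $Y$ is a smooth compact toric variety its small quantum cohomology is generically semisimple, and semisimplicity is a Zariski-open condition on the structure constants of a finite-dimensional commutative algebra; it therefore suffices to exhibit a single semisimple point of the analytic family of algebras on $H^\bullet(X)$, which can be produced by perturbing from the large-radius limit along a generic direction in $H^2(X)$, or by invoking the local-toric semisimplicity results in \cite{Iritani:convergence}.

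The principal technical obstacle is less a single sharp inequality than a piece of careful bookkeeping: one must set up the definition of the potentials $\cF^g_X$ for the non-compact $X$ through twisted invariants, verify that the non-equivariant limit is well defined (a role played throughout by the negativity of each $E_j$), and transport the analyticity obtained on the $Y$-side through this limit and through the mirror map so that it produces convergence on the polydiscs required by Definition~\ref{def:descendant-converge}. Once this framework is in place, both hypotheses of Theorem~\ref{thm:maintheorem} for $X$ reduce to the corresponding statements for $Y$ already invoked in the proof of Corollary~\ref{cor:toricflag}.
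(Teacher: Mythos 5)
Your route diverges from the paper's, and the divergence matters. The paper does not attempt to verify the hypotheses of Theorem~\ref{thm:maintheorem} for the non-compact $X$ at all. Instead it passes to the projectivization $\overline{X} = \mathbb{P}(E \oplus \C)$, which is a \emph{compact} toric variety, and observes that by the negativity assumption any non-constant stable map to $\overline{X}$ in a class pulled back from $X$ lies in the zero section. This identifies the (nonzero-degree) Gromov--Witten invariants of $X$ with a subset of those of $\overline{X}$, so $\cZ_X$ occurs literally as a subseries of $\cZ_{\overline{X}}$; NF-convergence of $\cZ_{\overline{X}}$ (Corollary~\ref{cor:toricflag}) then immediately gives NF-convergence of $\cZ_X$. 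No quantum cohomology, mirror map, $I$-function, or semisimplicity analysis for $X$ itself is needed.

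Your plan, by contrast, applies Theorem~\ref{thm:maintheorem} directly to $X$, and this is where the gap lies: Theorem~\ref{thm:maintheorem} is stated (and proved, via Teleman's classification of CohFTs and the passage through the rings $\Lambda[\![t]\!]$, $\cS$, etc.) for smooth \emph{projective} varieties. The total space $X$ is not projective, and as the paper itself remarks, the degree-zero Gromov--Witten invariants of $X$ are not even defined because the relevant moduli spaces are non-compact. To run your argument you would first have to re-establish the entire chain Theorem~\ref{thm:Ageom=Aabs} $\Rightarrow$ Theorem~\ref{thm:convergence-ancestor} $\Rightarrow$ Theorems~\ref{thm:convergenceimpliesFrechetconvergenceancestor}, \ref{thm:descendant-convergence} in the twisted / local-GW setting — verifying that the twisted theory still gives a CohFT satisfying Teleman's homogeneity and flat-vacuum conditions, that the Dubrovin connection and ancestor-descendant relation behave as required, and that the Fock-space bookkeeping survives the absence of the degree-zero sector. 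You flag this ("set up the definition of the potentials $\cF^g_X$ for the non-compact $X$ through twisted invariants...") but it is not bookkeeping; it is the substantive obstacle, and it is exactly what the paper's compactification-plus-subseries trick is designed to avoid. So as written the proposal does not close: it invokes a theorem outside its stated hypotheses without supplying the extension that would make the invocation legitimate.
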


Corollary~\ref{cor:local} is proved in Section~\ref{sec:local} below.  

\medskip

We deduce Theorem~\ref{thm:maintheorem} from a more fundamental
result, Theorem~\ref{thm:maintheoremancestors} below, concerning the
convergence of the total ancestor potential $\cA_X$.  The total
ancestor potential is a generating function for ancestor
Gromov--Witten invariants (see equations
\ref{eq:mixedcorrelator}--\ref{eq:ancestorpotential}).  We say that
the total ancestor potential $\cA_X$ is NF-convergent if it is
convergent on an infinite-dimensional polydisc as before (see
equation~\ref{eq:formofpolydisc}).  We consider also a stronger notion
of convergence for $\cA_X$ (see
Definition~\ref{def:ancestor-converge}), requiring that in terms of
the dilaton-shifted co-ordinates introduced in \S\ref{sec:dilaton}, we
have:
\begin{align*}
  & \cA_X = \exp \left( \sum_{g=0}^\infty \hbar^{g-1} \bar{\cF}^{g}_t
  \right) \\
\intertext{where:}
  & \bar{\cF}^{g}_t = 
\sum_{n: 3g-3+n \ge 0}  
  \frac{1}{n!}
  \sum_{\substack{ 
      I : I = (i_1,\ldots,i_n) \\
      \text{$i_j \neq 1$ for all $j$} \\ 
      i_1 + \cdots + i_n \leq 3 g -3 + n}}
  \sum_{A=(\alpha_1,\dots,\alpha_n)}  
 C^{(g)}_{I,A}(t,q_1) \,
  q_{i_1}^{\alpha_1}\cdots q_{i_n}^{\alpha_n}
\end{align*}
for some analytic functions $C^{(g)}_{I,A}(t,q_1)$ of $(t,q_1)$
that are rational in $q_1$ unless $(g,n)=(1,0)$ 
(see \eqref{eq:coeff-rationality}). 
Convergence in this sense implies that
the genus-$g$ ancestor potential $\bar{\cF}^{g}_t$ is a formal power
series in $q_0^\alpha$ with coefficients that depend polynomially on
$q_i^\alpha$, $i > 1$, and holomorphically on $t$ and $q_1^\alpha$;
furthermore Givental's tameness condition \cite{Givental:An} holds.

\begin{theorem}
  \label{thm:maintheoremancestors}
  Let $X$ be a smooth projective variety that satisfies Formal
  Semisimplicity, Genus-Zero Convergence, and Analytic Semisimplicity.
  The total ancestor potential $\cA_X$ is NF-convergent in the sense
  of Definition~\ref{def:Frechet-convergence-ancestor}, and is
  convergent in the sense of Definition~\ref{def:ancestor-converge}.
\end{theorem}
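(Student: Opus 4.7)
The plan is to reduce the theorem to the Givental--Teleman reconstruction theorem. Under Genus-Zero Convergence and Analytic Semisimplicity, the Frobenius structure on $H^\bullet(X)$ is analytic on a nonempty open subset of $H^\bullet(X)$ which is, by semisimplicity, dense. Teleman's reconstruction theorem then expresses the total ancestor potential at an analytically semisimple base point $t$ as
\[
\cA_X\big|_t = \hat{\Psi}(t)\, \hat{R}(z,t)\, \prod_{i=1}^{N+1} \cT\bigl(\hbar \Delta_i(t);\bq^{(i)}\bigr),
\]
where $\Psi(t)$ is the transition matrix from the flat frame to the normalized canonical frame, $R(z,t) = \Id + R_1(t) z + R_2(t) z^2 + \cdots$ is Givental's R-matrix, $\Delta_i(t)$ are the normalization scalars attached to the canonical coordinates, and $\cT$ is the Witten--Kontsevich tau function. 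The matrix $\Psi(t)$ and every coefficient $R_m(t)$ are holomorphic on the semisimple locus.

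The first step is to inspect the building blocks. After dilaton shift, the Witten--Kontsevich tau function has the form
\[
\cT(\hbar;\bq) = \exp\Bigl(\sum_{g\ge 0}\hbar^{g-1} F^g_{KW}(\bq)\Bigr),
\]
and the dimension constraint $i_1+\cdots+i_n = 3g-3+n$ on $\psi$-class intersections forces $F^g_{KW}$ to be polynomial in $q_i^\alpha$ for $i\ge 2$, rational in $q_1$ coming from the dilaton shift, and a formal power series in $q_0$; thus $\cT$ itself already has the structure demanded by Definition~\ref{def:ancestor-converge}. For the R-matrix, the standard Borel-summability of $R(z,t)$ in $z$ at each semisimple $t$, together with the uniqueness argument of Givental--Teleman for the ODE it satisfies in $t$, gives factorial-type bounds $\|R_m(t)\| \le C(K) M(K)^m\, m!$ uniformly on compact subsets $K$ of the semisimple locus.

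The main step, and the principal technical obstacle, is to show that the quantized symplectic operator $\hat{R}(z,t)$ acts on the product $\prod_i \cT$ so as to preserve both NF-convergence and the stronger rational structure; Teleman's theorem is a formal identity, and the new content here is analytic. The operator $\hat{R}$ is an exponentiated second-order differential operator in the $q_i^\alpha$ whose coefficients are polynomials in the $R_m(t)$ and $\hbar$. When extracting the coefficient $C^{(g)}_{I,\alpha}(t,q_1)$ of a fixed monomial $q_{i_1}^{\alpha_1}\cdots q_{i_n}^{\alpha_n}$ at genus $g$, only finitely many terms from the exponential of the quadratic operator contribute because of the dimension bound $i_1+\cdots+i_n \le 3g-3+n$ enforced by the Witten--Kontsevich input. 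Combining this with the factorial bounds on $R_m$ and a bookkeeping induction on $2g-2+n$, one shows that each $C^{(g)}_{I,\alpha}(t,q_1)$ is given by a finite sum of analytic functions of $t$ times rational functions of $q_1$, and that the resulting series converges on the requisite infinite-dimensional polydisc. Left-multiplication by $\hat{\Psi}(t)$ is a linear change of variables in the $q_i^\alpha$ with analytic coefficients, so it preserves these properties.

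To conclude, this establishes convergence of $\cA_X|_t$ in the sense of Definition~\ref{def:ancestor-converge} for every semisimple base point $t$, and in particular NF-convergence. To reach non-semisimple base points (such as the large-radius neighborhood where $\cA_X$ is a priori defined as a formal series), I would propagate the convergence using the genus-zero $S$-operator, which is analytic on the full genus-zero convergence domain and implements the change of base point for $\cA_X$ via an analytic symplectic transformation; the same Frechet-space arguments then transport NF-convergence and the stronger rational structure to a neighborhood of the origin.
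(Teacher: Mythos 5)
Your high-level outline---use Teleman's reconstruction to write $\cA_X$ via Givental's formula $\widehat\Psi\widehat R(\cT)$, observe that the tau function and the $R$-matrix enforce the rational/tame structure of Definition~\ref{def:ancestor-converge}---is the right idea and agrees with the paper's first half. However, there are two genuine gaps.

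First, you apply Teleman's theorem ``at an analytically semisimple base point $t$''. But the geometric ancestor potential is \emph{a priori} a formal power series at the large-radius limit point, where quantum cohomology is not semisimple; its value at a genuinely semisimple $t$ is exactly what one needs convergence to make sense of. The paper resolves this by applying Teleman's theorem over the algebraically closed field $k = \ov{\FractionField\Lambda[\![t]\!]}$ (where Formal Semisimplicity holds), obtaining $\cA_X = \cAabsformal$ as a formal identity; it then compares the ground fields $k_1,\dots,k_5$ in the proof of Theorem~\ref{thm:convergence-ancestor}, using Lemma~\ref{lem:k3k4k5} (which rests on Aroca's Puiseux-parametrization theorem) to show that a power series which is simultaneously algebraic over the convergent series and formal must itself be convergent. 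Your plan to ``propagate the convergence using the genus-zero $S$-operator'' presupposes the analyticity of the fundamental solution acting on infinitely many variables, which is part of what is being proved, so as stated it is circular.

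Second, and more seriously, the step ``the resulting series converges on the requisite infinite-dimensional polydisc'' is asserted but not proved. Rationality (Definition~\ref{def:ancestor-converge}) gives that each fixed partial derivative of $\bar\cF^g$ at $\by(z)=y_1 z$ is a finite Feynman sum and hence a rational function of $q_1$ with analytic $t$-dependence, but it does not directly bound the Taylor coefficients in $y_0^\alpha$ as the number of legs $n$ grows, since the number of decorated Feynman graphs grows rapidly with $n$. The paper does \emph{not} attempt a direct estimate of this kind. Instead it deduces NF-convergence (Theorem~\ref{thm:convergenceimpliesFrechetconvergenceancestor}) from convergence by a genuinely separate analytic argument: the Kontsevich--Manin ancestor--descendant relation (Theorem~\ref{thm:ancestor-descendant}) and the Dubrovin--Dijkgraaf--Witten reconstruction define a change of coordinates between $\bq$ and $(\bx,t)$; the paper shows this change of variables is a local isomorphism of nuclear Fr\'echet spaces via the \emph{Nash--Moser inverse function theorem} (with the tame estimates of Lemmas~\ref{lem:product-estimate} and~\ref{lem:ML-estimate}); and finally Lemma~\ref{lem:Frechet-Banach} converts holomorphy on a neighbourhood in the Fr\'echet space $\cH_+$ into absolute convergence of the monomial Taylor expansion on a Banach ball of the form \eqref{eq:formofpolydisc}. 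None of the Nash--Moser/Fr\'echet-space machinery appears in your proposal, and the ``bookkeeping induction on $2g-2+n$'' would have to replace it with explicit, uniform-in-$n$ estimates on Feynman sums, which you have not supplied and which is not how the paper proceeds.
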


The rationality condition on $\cA_X$ and the definition of the
ancestor Fock space in which $\cA_X$ lies were developed as part of a
joint project with Hsian-Hua Tseng. 
We would like to thank him for allowing us to present 
the Fock space formulation in this paper. 

We now discuss the work of Givental
\citelist{\cite{Givental:semisimple}\cite{Givental:quantization}} and
Teleman \cite{Teleman} on higher-genus potentials for target spaces
with semisimple quantum cohomology.  This is an essential ingredient
in the proof of Theorem~\ref{thm:maintheoremancestors}.  Motivated by
an ingenious localization computation in torus-equivariant
Gromov--Witten theory, Givental conjectured a formula which determines
higher-genus Gromov--Witten potentials in terms of genus-zero data
alone.  His formula makes sense for any semisimple Frobenius manifold.
In order to distinguish it from the geometric Gromov--Witten
potential, we call the potential associated to a Frobenius manifold
via Givental's formula the \emph{abstract potential}.

Teleman has shown that for any semisimple Cohomological Field Theory
(CohFT) satisfying a homogeneity condition and a flat vacuum
condition, the potential associated to the CohFT coincides with
Givental's abstract potential \cite{Teleman}.  Since Gromov--Witten
theory defines a CohFT satisfying the homogeneity and flat vacuum
conditions, Teleman's theorem applies to Gromov--Witten theory
whenever the genus-zero part (quantum cohomology) is semisimple.
There is a subtlety here.  Quantum cohomology is a formal family of
algebras parametrized by Novikov variables $Q_i$ and cohomology
parameters $t^0, \ldots, t^N$ as above, and its convergence is not
known in general.  At the origin $Q_i=t^j=0$, the quantum cohomology
coincides with the classical cohomology ring, and so is semisimple
only when the target $X$ is a point.  At first sight, then, it appears
that to apply Teleman's theorem we need to find a semisimple point in
the parameter space where all higher-genus Gromov--Witten potentials
converge.  (To prove this directly is beyond the reach of current
methods in all but the very simplest examples.)\phantom{.}  In fact
this is not the case: as Teleman points out in
\cite{Teleman}*{Example~1.6}, his theorem applies whenever the quantum
cohomology ``at the generic point" in the formal neighbourhood of the
origin is semisimple.  Thus Givental's abstract potential can be
defined and coincides with the geometric Gromov--Witten potential
under our assumption of Formal Semisimplicity
\eqref{eq:formalsemisimplicity}.  If in addition Genus-Zero
Convergence holds then \emph{it follows} that the higher-genus
Gromov--Witten potentials, which \emph{a priori} are only formal power
series, in fact converge to give analytic functions.

We expand upon these points in the rest of the paper.  In
\S\ref{sec:notation} we fix notation for Gromov--Witten invariants,
generating functions, and quantum cohomology.  In
\S\ref{sec:quantization} we describe Givental's quantization
formalism.  We then discuss Givental's formula in the analytic setting
(\S\ref{sec:Giventalformulaanalytic}) and in the formal setting
(\S\ref{sec:Giventalformulaformal}), and explain how Givental's
formula follows from Teleman's classification theorem
(\S\ref{sec:TelemanimpliesGivental}).  Results about the NF
convergence of ancestor and descendant potentials are stated in
\S\ref{sec:statements} and proved in \S\ref{sec:proofs}.  We conclude
with the proof of Corollary~\ref{cor:local} in \S\ref{sec:local}.

\subsection*{Acknowledgements}  
We are grateful to Hsian-Hua Tseng for very useful discussions on
Givental quantization.  The definition of Fock spaces for ancestor
potentials was originally worked out in another joint project with
him, and we thank him for allowing us to present this formulation
here.  We thank Yongbin Ruan and Yefeng Shen for giving us a strong
motivation for writing up this paper. TC thanks Konstanze Rietsch for
a useful conversation about flag varieties.  This research is
supported by TC's Royal Society University Research Fellowship, ERC
Starting Investigator Grant number~240123, the Leverhulme Trust, and
Grant-in-Aid for Scientific Research (S)~23224002 and 
Grant-in-Aid for Young Scientists (B)~22740042.  

\section{Preliminaries} 
\label{sec:notation}
Let $X$ be a smooth projective variety 
and let $H_X$ be the even part of $H^\bullet(X;\Q)$.

\subsection{Gromov--Witten Invariants}
\label{sec:GW}
Let $X_{g,n,d}$ denote the moduli space of $n$-pointed
genus-$g$ stable maps to $X$ of degree $d \in H_2(X;\Z)$.
Write:
\begin{align}
  \label{eq:correlator}
  \corr{a_1 \psi_1^{i_1},\ldots,a_n
    \psi_n^{i_n}}^{X}_{g,n,d}
  =
  \int_{[X_{g,n,d}]^{\text{vir}}}
  \prod_{k=1}^{k=n} \ev_k^\star(a_k) \cup \psi_k^{i_k}
\end{align}
where $a_1,\ldots,a_n \in H_X$; $\ev_k \colon X_{g,n,d} \to X$ is the
evaluation map at the $k$th marked point; $\psi_1,\ldots,\psi_n \in
H^2\big(X_{g,n,d};\Q\big)$ are the universal cotangent line classes;
$i_1, \ldots, i_n$ are non-negative integers; and the integral denotes
cap product with the virtual fundamental class
\citelist{\cite{Behrend--Fantechi} \cite{Li--Tian}}.  The right-hand
side of \eqref{eq:correlator} is a rational number, called a
\emph{Gromov--Witten invariant} of $X$ (if $i_k = 0$ for all $k$) or a
\emph{gravitational descendant} (if any of the $i_k$ are non-zero).

\subsection{Bases for Cohomology and Novikov Rings}
\label{sec:bases}

Fix bases $\phi_0,\ldots,\phi_N$ and $\phi^0,\ldots,\phi^N$ for $H_X$
such that:
\begin{equation}
  \label{eq:basisproperties}
  \begin{minipage}{0.9\linewidth}
    \begin{itemize}
    \item $\phi_0$ is the identity element of $H_X$
    \item $\phi_1,\ldots,\phi_r$ is a nef $\Z$-basis for 
 $H^2(X;\Z) \subset H_X$ 
    \item each $\phi_i$ is homogeneous
    \item $(\phi_i)_{i=0}^{i=N}$ and $(\phi^j)_{j=0}^{j=N}$ are dual with
      respect to the Poincar\'e pairing
    \end{itemize}
  \end{minipage}
\end{equation}
Note that $r$ is the rank of $H_2(X)$. 
Define the \emph{Novikov ring} 
$\Lambda = \Q[\![Q_1,\ldots,Q_r]\!]$ and, for $d \in H_2(X;\Z)$, write:
\[
Q^d = Q_1^{d_1} \cdots Q_r^{d_r}
\]
where $d_i = d\cdot \phi_i$. 

\subsection{Quantum Cohomology}
\label{sec:convergence}
Let $t^0,\ldots,t^N$ be the co-ordinates on $H_X$ defined by the basis
$\phi_0,\ldots,\phi_N$, so that $t \in H_X$ satisfies $t = t^0 \phi_0
+ \ldots + t^N \phi_N$.  
Define the \emph{genus-zero Gromov--Witten potential} 
$F^0_X \in \Lambda[\![t^0,\dots,t^N]\!]$ by: 
\[
F^0_X = \sum_{d \in \NE(X)} \sum_{n = 0}^\infty \frac{Q^d}{n!}
\corr{\vphantom{\big\vert}t, \ldots,t}^X_{0,n,d}
\]
where the first sum is over the set $\NE(X)$ of degrees of effective
curves in $X$.  This is a generating function for genus-zero
Gromov--Witten invariants.  The \emph{quantum product} $\ast$ is
defined in terms of the third partial derivatives of $F^0_X$:
\begin{equation}
  \label{eq:bigQC}
  \phi_\alpha \ast \phi_\beta = \sum_{\gamma=0}^{\gamma=N} 
  {\partial^3 F^0_X \over \partial t^\alpha \partial t^\beta \partial t^\gamma}
  \phi^\gamma 
\end{equation}
The product $\ast$ is bilinear over $\Lambda$, and defines a formal
family of algebras on $H_X \otimes \Lambda$ parameterized by
$t^0,\ldots,t^N$.  This is the \emph{quantum cohomology} or
\emph{big quantum cohomology} of $X$.

We have defined big quantum cohomology as a formal family of algebras,
i.e.~in terms of the ring of formal power series
$\Q[\![Q_1,\ldots,Q_r]\!][\![t^0,\ldots,t^N]\!]$.  In many cases
however, the genus-zero Gromov--Witten potential $F^0_X$ converges to
an analytic function.  By this we mean the following.  The Divisor
Equation \cite{Kontsevich--Manin}*{\S2.2.4} implies that:
\[
F^0_X \in \Q[\![t^0,Q_1 e^{t^1}, \ldots, Q_r e^{t^r}, t^{r+1},t^{r+2},\ldots,t^N]\!]
\]
and one can often show, for example by using mirror symmetry, that
$F^0_X$ is the power series expansion of an analytic function:
\[
F^0_X \in \Q\Big\{t^0,Q_1 e^{t^1}, \ldots, Q_r e^{t^r},
t^{r+1}, t^{r+2}, \ldots, t^N\Big\}
\]
We can then set $Q_1 = \cdots = Q_r = 1$, obtaining an analytic
function:
\[
F^0_X \in \Q\Big\{t^0, e^{t^1}, \ldots, e^{t^r},
t^{r+1}, t^{r+2},\ldots,t^N\Big\}
\]
of the variables $t^0,\ldots, t^N$ defined in a region:
\begin{align}
  \label{eq:LRLnbhd}
  \begin{cases}
    |t^i| < \epsilon_i & \text{$i=0$ or $r<i\leq N$} \\
    \Re t^i \ll 0 & 1 \leq i \leq r
  \end{cases}
\end{align}
We refer to the limit point
\begin{align*}
  \begin{cases}
    t^i =0  & \text{$i=0$ or $r<i\leq N$} \\
    \Re t^i \to -\infty & 1 \leq i \leq r
  \end{cases}
\end{align*}
as the \emph{large-radius limit point}.  When $F^0_X$ converges to an
analytic function in the sense just described, the quantum product
$\ast$ then defines a family of algebra structures on $H_X$ that
depends analytically on parameters $t^0,\ldots,t^N$ in the
neighbourhood \eqref{eq:LRLnbhd} of the large-radius limit point.

\begin{remark} 
  In this paper we only consider the even part of the cohomology
  group, but this is not really a restriction.
  Hertling--Manin--Teleman \cite{HMT} proved that if the quantum
  cohomology of a smooth projective variety $X$ is semisimple, then
  $X$ has no odd cohomology and is of Hodge--Tate type: $H^{p,q}(X)=0$
  for $p\neq q$.
\end{remark} 

\subsection{The Dubrovin Connection}
\label{sec:Dubrovinconnection}

Consider $H_X \otimes \Lambda$ as a scheme over $\Lambda$ and let
$\cM$ be a formal neighbourhood of the origin in $\cM$.  The
\emph{Euler vector field} $E$ on $\cM$ is:
\begin{equation}
  \label{eq:Eulerfield}
  E = {t^0 \parfrac{}{t^0}} + \sum_{i=1}^r \rho^i \parfrac{}{t^i} +
  \sum_{i=r+1}^N \big(1 - \textstyle\frac{1}{2}{\deg \phi_i} \big) 
t^i \parfrac{}{t^i}
\end{equation}
where $c_1(X) = \rho^1 \phi_1 + \cdots + \rho^r \phi_r$.  The
\emph{grading operator} $\mu \colon H_X \to H_X$ is defined by:
\[
\mu(\phi_i) = \left(\textstyle\frac{1}{2} \deg \phi_i -
  \textstyle\frac{1}{2} \dim_{\C} X\right) \phi_i
\]
Let $\pi \colon \cM \times \A^1 \to \cM$ 
denote projection to the first factor.  
The \emph{extended Dubrovin connection} 
is a meromorphic flat connection $\nabla$ 
on $\pi^\star T\cM \cong H_X \times (\cM\times \A^1)$, defined by:
\begin{align*}
& \nabla_{\parfrac{}{t_i}} = \parfrac{}{t_i} 
  - \frac{1}{z}\big(\phi_i {\ast}\big)
&& 0 \leq i \leq N \\
& \nabla_{z \parfrac{}{z}} = z \parfrac{}{z} + \frac{1}{z} 
\big({E\ast}\big) + \mu 
&& \text{where $z$ is the co-ordinate on $\A^1$.} 
\end{align*}
Together with the pairing on $T\cM$ induced by the Poincar\'e pairing,
the Dubrovin connection equips $\cM$ with the structure of a formal
Frobenius manifold with extended structure
connection \cite{Manin}.

If the genus-zero Gromov--Witten potential $F^0_X$ converges to an
analytic function, as discussed in Section~\ref{sec:convergence} above, 
then the extended Dubrovin connection with $Q_1= \cdots = Q_r =1$ 
depends analytically on $t$ in a neighbourhood \eqref{eq:LRLnbhd} 
of the large-radius limit point 
and defines an analytic Frobenius manifold 
with extended structure connection. 

\subsection{Gromov--Witten Potentials}
\label{sec:GW-potentials} 

We begin by defining the formal power series ring to which the
Gromov--Witten potentials belong.  The Novikov ring $\Lambda$ is
topologized by regarding it as the completion of the polynomial ring
$\Q[Q_1,\ldots,Q_r]$ with respect to the valuation $v$ such that
$v(Q^d) = d \cdot \omega$, where $\omega$ is a K\"ahler class on $X$.
We will need also certain related formal power series rings, shown in
Table~\ref{tab:rings}.  These are defined as the completions of
polynomial rings, shown in the second column of Table~\ref{tab:rings},
with respect to a valuation $v$ such that:
\begin{alignat*}{4} 
  & v(Q^d) = d \cdot \omega, & \quad 
  & v(t^\alpha) = 1, & \quad 
  & v(t^\alpha_i) = i+1, & \quad 
  & v(y^\beta_j) = j+1.  
\end{alignat*} 
For a ring $R$ equipped with non-negative valuation $v$, we define:
\[
R\{\hbar^{-1},\hbar]\!] = 
\Bigg\{ \sum_{n = -\infty}^{n=\infty} a_n \hbar^{n} : \text{$a_n \in
  R$, $\lim_{n \to -\infty} v(a_n) = \infty$}\Bigg\}. 
\]

\begin{table}
  \centering
  \begin{tabular}{l@{\hspace{6ex}}ll}
    \toprule
    \multicolumn{1}{c}{Completed Ring} & 
    \multicolumn{1}{c}{Underlying Polynomial Ring} \\ \midrule
    $\Lambda$ & $\Q[Q_1,\ldots,Q_r]$ \\
   $\Lambda[\![t]\!]$ & $\Q[Q_1,\ldots,Q_r]
[t^\alpha : 0 \leq \alpha \leq N]$ \\
    $\Lambda[\![\bt]\!]$ & $\Q[Q_1,\ldots,Q_r][t_i^\alpha :
    0 \leq i < \infty, 0 \leq \alpha \leq N]$ \\
    $\Lambda[\![\by]\!][\![t]\!]$ & $\Q[Q_1,\ldots,Q_r][y_j^\beta:
    0 \leq j < \infty, 0 \leq \beta \leq N][t^\alpha : 0 \leq \alpha \leq N]$\\
   \bottomrule \\
  \end{tabular}
  \caption{Formal Power Series Rings}
  \label{tab:rings}
\end{table}

Let $\bt =(t_0, t_1, t_2, \ldots)$ be an infinite sequence of elements
of $H_X$ and write $t_i = t_i^0 \phi_0 + \cdots + t_i^N \phi_N$.
Define the \emph{genus-$g$ descendant potential} $\cF^g_X \in
\Lambda[\![\bt]\!]$ by:
\begin{equation} 
\label{eq:genus_g_descendantpot}
\cF^g_X = \sum_{d \in \NE(X)} \sum_{n = 0}^{\infty} 
\sum_{i_1=0}^\infty
\cdots
\sum_{i_n=0}^\infty 
{Q^d \over n!}
\corr{t_{i_1} \psi_1^{i_1},\ldots,t_{i_n}\psi_n^{i_n}}^X_{g,n,d}. 
\end{equation} 
This is a generating function for genus-$g$ gravitational descendants.
The \emph{total descendant potential} $\cZ_X \in
\Lambda[\![\bt]\!]\{\hbar^{-1},\hbar]\!]$ is:
\begin{equation} 
\label{eq:totaldescendantpot}
\cZ_X = \exp \Bigg(\sum_{g=0}^\infty \hbar^{g-1} \cF^g_X\Bigg). 
\end{equation}
This is a generating function 
for all gravitational descendants of $X$. 

Consider now the map $p_m \colon X_{g,m+n,d} \to \cMbar_{g,m}$ that
forgets the map and the last $n$ marked points, and then stabilises
the resulting prestable curve.  Write $\psi_{m|i} \in
H^2(X_{g,n+m,d};\Q)$ for the pullback along $p_m$ of the $i$th
universal cotangent line class on $\cMbar_{g,m}$, and:
\begin{multline}
  \label{eq:mixedcorrelator}
  \corr{a_1 \bar{\psi}_1^{i_1},\ldots,a_m
    \bar{\psi}^{i_m}:b_1,\ldots,b_n}^{X}_{g,m+n,d} \\
  =
  \int_{[X_{g,m+n,d}]^{\text{vir}}}
  \prod_{k=1}^{k=m} 
  \Big(\ev_k^\star(a_k) \cup \psi_{m|k}^{i_k}\Big)
  \cdot
  \prod_{l=m+1}^{l=m+n} \ev_{l}^\star (b_{l-m})  
\end{multline}
where $a_1,\ldots,a_m \in H_X$; $b_1,\ldots,b_n \in
H_X$; and $i_1,\ldots,i_m$ are non-negative
integers.

As above, consider $t \in H_X$ with $t = t^0\phi_0 + \cdots + t^N
\phi_N$ and an infinite sequence 
$\by =(y_0, y_1, y_2, \ldots)$ of elements in $H_X$ 
with $y_i = y_i^0 \phi_0 + \cdots + y_i^N \phi_N$.  
The \emph{genus-$g$ ancestor potential} 
$\bar{\cF}^g_X \in \Lambda[\![\by]\!][\![t]\!]$ 
is defined by:
\begin{equation}
  \label{eq:ancestor}
  \bar{\cF}^g_{X} = 
  \sum_{d \in \NE(X)}
  \sum_{n=0}^\infty
  \sum_{m=0}^\infty
  \sum_{j_1=0}^\infty
  \cdots
  \sum_{j_m=0}^\infty
  {Q^d \over n!m!}
 \corr{y_{j_1} \bar{\psi}_{1}^{j_1},\ldots,y_{j_m} \bar{\psi}_{m}^{j_m} : 
    \overbrace{t,\ldots, t}^n}^X_{g,m+n,d}
\end{equation}
and the \emph{total ancestor potential} $\cA_X \in
\Lambda[\![\by]\!][\![t]\!]\{\hbar^{-1},\hbar]\!]$ is:
\begin{equation}
  \label{eq:ancestorpotential}
  \cA_{X} = \exp \Bigg(\sum_{g=0}^\infty \hbar^{g-1} \bar{\cF}^g_{X}\Bigg)
\end{equation}
We will often want to emphasize the dependence of the ancestor
potentials on the variable $t$, writing $\bar{\cF}^g_t$ for
$\bar{\cF}^g_X$ and $\cA_t$ for $\cA_X$.  Note that the ancestor
potentials \eqref{eq:ancestor} do not contain terms with $g=0$ and
$m<3$, or with $g=1$ and $m=0$, as in these cases the space
$\cMbar_{g,m}$ is empty and so the map $p_m\colon X_{g,m+n,d} \to
\cMbar_{g,m}$ is not defined.

\subsection{Dilaton Shift} 
\label{sec:dilaton} 

Consider now another sequence $\bq =(q_0,q_1,q_2,\dots)$ with $q_i \in
H_X$, and write $q_i = q_i^0 \phi_0 + \cdots + q_i^N \phi_N$.  We
regard $\{q_i^\alpha : 0 \leq i < \infty, 0 \leq \alpha \leq N \}$ as a
co-ordinate system on $H_X[\![z]\!]$, by writing a general point in
$H_X[\![z]\!]$ as $\bq(z) = \sum_{i=0}^\infty q_i z^i$.  The
\emph{dilaton shift} is an identification between $\bq
=(q_0,q_1,q_2,\dots)$ and the arguments $\bt =(t_0,t_1,t_2,\dots)$,
$\by =(y_0,y_1,y_2,\dots)$ of the descendant and ancestor potentials:
\begin{align} 
 & q_i^\alpha = 
  \begin{cases}
    t_1^0 - 1 & \text{if $(i,\alpha) = (1,0)$} \\
    t_i^\alpha & \text{otherwise}
  \end{cases}
  && q_i^\alpha = 
  \begin{cases}
    y_1^0 - 1 & \text{if $(i,\alpha) = (1,0)$} \\
    y_i^\alpha & \text{otherwise}
  \end{cases}
  \notag \\
  \intertext{Setting $\bt(z) = \sum_{i=0}^\infty t_i z^i$ and $\by(z) =
    \sum_{i=0}^\infty y_i z^i$, the dilaton shift becomes the equalities:}
  \label{eq:dilatonshift} 
  & \bq(z) = \bt(z) - \phi_0 z
  && \bq(z) = \by(z) - \phi_0 z
\end{align}
In this way we regard the descendant potential $\cF^g_X$ as a function
on the formal neighbourhood of the point ${-\phi_0} z \in H_X$.  The
dilaton shift for the ancestor potential is discussed in
Example~\ref{exa:AX}.

\subsection{The Orbifold Case} 
The results in this paper are all valid in the more general setting
where $X$ is a smooth orbifold (or Deligne--Mumford stack) rather than
a smooth algebraic variety.  The discussion above goes through in this
situation with minimal changes, as follows:
\begin{itemize}
\item We take $H_X$ to be the even part\footnote{Here we mean the even
    part of the rational cohomology of the inertia stack $IX$ with
    respect to the usual grading on $H^\bullet(IX)$, not the age-shifted
    grading.} of the Chen--Ruan orbifold cohomology
  $H^\bullet_{\CR}(X;\Q)$ rather than the even part of the ordinary
  cohomology $H^\bullet(X;\Q)$.
\item We replace:
  \begin{itemize}
  \item the usual grading on $H^\bullet(X)$
    by the age-shifted grading on $H^\bullet_{\CR}(X)$
 \item the Poincar\'e pairing on $H^\bullet(X)$ by the orbifold
    Poincar\'e pairing on $H^\bullet_{\CR}(X)$.
  \end{itemize}
  Note that $H^2(X) \subset H^2_{\CR}(X)$, and so definition
  \eqref{eq:basisproperties} makes sense in the orbifold context.
\item We define correlators \eqref{eq:correlator} and
  \eqref{eq:mixedcorrelator} using orbifold Gromov--Witten invariants
  \cite{AGV} rather than usual Gromov--Witten invariants.  
  There are two small differences:
  \begin{itemize}
  \item a subtlety in the definition of $\ev_k^\star$, 
  discussed in \cite{AGV}, \cite{CCLT}*{\S2.2.2}
  \item the degree $d$ of an orbifold stable map $f:\Sigma \to X$ 
    lies in $H_2(|X|;\Z)$, where $|X|$ is the coarse moduli 
    space of $X$. 
  \end{itemize}
\end{itemize}
Having made these changes, the discussion in
\S\S\ref{sec:GW}--\ref{sec:dilaton} 
applies to orbifolds as well.  
In this context, the family of algebras 
$\big(H_X \otimes\Lambda,\ast\big)$ 
is called \emph{quantum orbifold cohomology}. 

\subsection{FJRW Theory}
The discussion in this paper applies also to the so-called
FJRW~theory, which has been developed recently by Fan--Jarvis--Ruan
based on an old idea of Witten
\citelist{\cite{FJR}\cite{Witten:algebraic}}.  FJRW theory is a
Gromov--Witten-type theory with target a Landau--Ginzburg orbifold: it
defines a Cohomological Field Theory (CohFT) on a certain state space
$H_{\FJRW}$ which satisfies Teleman's homogeneity and flat vacuum
conditions. Thus Teleman's classification result applies to FJRW
theory.  FJRW theory differs from Gromov--Witten theory in that it
lacks Novikov variables $Q_1,\ldots,Q_r$; most of the discussion in this paper,
however, goes through just by setting $r=0$:
\begin{itemize} 
\item The genus-zero part of FJRW theory defines a Frobenius manifold
  structure on the formal neighbourhood of the origin of $H_{\FJRW}$;
\item Formal Semisimplicity \eqref{eq:formalsemisimplicity},
  Genus-Zero Convegence \eqref{eq:genuszeroconvergence} and Analytic
  Semisimplicity \eqref{eq:analyticsemisimplicity} make sense for this
  Frobenius manifold;
\item The descendant potential $\cZ_{\FJRW}$ is a formal power series
  in $\bt(z) \in H_{\FJRW}[\![z]\!]$;
\item the ancestor potential $\cA_{\FJRW,t}$ is a formal power series
  in $\by(z)\in H_{\FJRW}[\![z]\!]$ and $t\in H_{\FJRW}$.
\end{itemize}

\section{Givental's Quantization Formalism} 
\label{sec:quantization}

In this section, we work over an arbitrary commutative ring $R$ which
contains $\Q$.  Let $V$ be a finitely generated free $R$-module
equipped with a symmetric perfect pairing:
\[
\pair{\cdot}{\cdot}_V \colon 
V\otimes_R V \to R.  
\]
Let $\{\phi_\alpha\}_{\alpha=0}^N$ be an $R$-basis of $V$ and let
$\phi^\alpha$ be the dual basis with respect to the pairing
$\pair{\cdot}{\cdot}_V$, so that $\pair{\phi_\alpha}{\phi^\beta}_V =
\delta_\alpha^\beta$.  We denote a general point of $V[\![z]\!]$ by:
\[
\bq(z) = q_0 + q_1 z + q_2 z + q_3 z^3 + \cdots
\]
and write $q_i = q_i^0 \phi_0 + \cdots + q_i^N \phi_N$. Then $\{
q_i^\alpha :  0 \leq i < \infty, 0\le \alpha \le N \}$ gives a co-ordinate system
on $V[\![z]\!]$.

\begin{remark}
  \label{rem:HXexample}
  In the case where $R = \Q$, $V = H_X$, and $\pair{\cdot}{\cdot}_V$
  is the Poincar\'e pairing, we recover the situation described in
  \S\ref{sec:dilaton}.  
\end{remark}

\subsection{Ancestor Fock Space}

\begin{definition}[Ancestor Fock Space; see Givental \cite{Givental:quantization}*{\S8}]
  \label{def:ancestorFock}
  Choose a base point $-\delta = - \sum_{\alpha=0}^N \delta^\alpha
  \phi_\alpha \in V$, and consider the co-ordinate system $\{
  y_i^\alpha : 0 \leq i < \infty, 0\le \alpha \le N \}$ on
  $V[\![z]\!]$ defined by:
  \[
  y_i^\alpha = 
  \begin{cases}
    q_1^\alpha + \delta^\alpha & \text{if $i=1$} \\
    q_i^\alpha & \text{otherwise}
  \end{cases}
  \]
  Let $R[\![\by]\!]$ denote the formal power series ring
  $R[\![y_i^\alpha : 0 \leq i < \infty, 0 \leq \alpha \leq N]\!]$
  equipped with the valuation $v$ defined by $v(y_i^\alpha) = i+1$.
  The \emph{ancestor Fock space} $\Fock(V,\delta)$ is the set of
  elements
  \[
  \cA \in R[\![\by]\!]\{\hbar^{-1},\hbar]\!]
  \]
  that admit an expansion of the form: 
  \begin{equation}
    \label{eq:Fockexpansion}
    \cA = \exp \left( \sum_{g=0}^\infty \hbar^{g-1} \cF^{g}
    \right)
  \end{equation}
  such that $\cF^g \in R[\![\by]\!]$ and: 
\begin{align}
\label{eq:tameness}
\begin{split} 
& \left. \cF^0 \right|_{\by(z)=0} = 
\left.
\parfrac{\cF^0}{y_i^\alpha} \right |_{\by(z)=0}= 
\left. \parfrac{^2 \cF^0}{y_{i_1}^{\alpha_1} \partial y_{i_2}^{\alpha_2}} 
\right|_{\by(z)=0}=0,  \quad 
\left. \cF^1 \right|_{\by(z)=0} = 0 
\\
& {\partial^n \cF^g \over \partial
      y_{i_1}^{\alpha_1} \cdots \partial y_{i_n}^{\alpha_n}}
    \bigg|_{\by(z) = 0} = 0 \quad \text{ whenever}
\quad i_1 + \cdots +
    i_n > 3g-3+n.
\end{split} 
\end{align}
\end{definition}  
Write $y_i = y_i^0 \phi_0 + \cdots + y_i^N \phi_N$ and $\by(z) =
\sum_{i=0}^\infty y_i z^i$.  The co-ordinate system $\by =
(y_0,y_1,y_2,\ldots)$ from Definition~\ref{def:ancestorFock} is
related to the co-ordinate system $\bq = (q_0,q_1,q_2,\ldots)$ defined
above Remark~\ref{rem:HXexample} by:
\begin{equation}
  \label{eq:generaldilatonshift}
 q_i^\alpha = 
  \begin{cases}
    y_1^\alpha - \delta^\alpha & \text{if $i=1$} \\
    y_i^\alpha & \text{otherwise}
  \end{cases}
\end{equation}
or in other words by $\bq(z) = \by(z) - \delta z$; cf.~the dilaton
shift \eqref{eq:dilatonshift}.  Elements of $\Fock(V,\delta)$ can thus
be regarded as functions on a formal neighbourhood of the point ${-
  \delta} z \in V[\![z]\!]$.  

\begin{remark}
  Any expression of the form \eqref{eq:Fockexpansion} such that $\cF^g
  \in R[\![\by]\!]$ and condition
  \eqref{eq:tameness} holds is automatically an element of
  $R[\![\by]\!]\{\hbar^{-1},\hbar]\!]$.
\end{remark}

\begin{remark}
  Condition \eqref{eq:tameness} implies that any element $\cA$ of
  $\Fock(V,\delta)$ is \emph{tame} in the sense of Givental
  \cite{Givental:An}.  Note in particular that $\cF^g$ is a formal
  power series in the variables $y_0^0,\dots,y_0^N,y_1^0,\dots,y_1^N$
  with coefficients in the polynomial ring $R[y_i^\alpha : 2\le i<
  \infty, \, 0\le \alpha \le N]$.
\end{remark}

\begin{definition}(Rationality) 
\label{def:rationality} 
An element $\cA$ of $\Fock(V,\delta)$
  is called \emph{rational} if there exists a polynomial $P(q_1) \in
  R[V^\vee]$ with $P(-\delta) = 1$ 
such that the potentials $\cF^g$ from \eqref{eq:Fockexpansion} satisfy:
 \begin{equation} 
 \label{eq:rationality}
  \frac{\partial^n \cF^g}
  { \partial y_{i_1}^{\alpha_1} \cdots \partial y_{i_n}^{\alpha_n}}
  \bigg|_{\by(z) = y_1 z} 
  = 
f_{g,I,A}(q_1) P(q_1)^{-(5g-5+2 n- (i_1+\cdots +i_n))}  
  \end{equation} 
for some polynomials $f_{g,I,A}(q_1) \in R[V^\vee]$ 
if $2g-2+n>0$; 
here $I = (i_1,\ldots,i_n)$, $A = (\alpha_1,\ldots,\alpha_n)$. 
We call $P$ the \emph{discriminant} of $\cA$.  
\end{definition} 

\begin{remark} 
\label{rem:rationality} 
Tameness \eqref{eq:tameness} and rationality 
\eqref{eq:rationality} for a potential can be  
summarized in the following expansion: 
\begin{align*}
\cF^{g}= 
\delta_{g,1} C^{(1)}(q_1) + 
\sum_{n:2g-2+n > 0}  
  \frac{1}{n!}
  \sum_{\substack{ 
      I : I = (i_1,\ldots,i_n) \\
      \text{$i_j \neq 1$ for all $j$} \\ 
      i_1 + \cdots + i_n \leq 3 g -3 + n}} 
 \sum_{A = (\alpha_1,\ldots,\alpha_n)}
 C^{(g)}_{I,A}(q_1) \,
  q_{i_1}^{\alpha_1}\cdots q_{i_n}^{\alpha_n}
\end{align*}
with 
\begin{align} 
\label{eq:coeff-rationality}
\begin{split} 
C^{(g)}_{I,A} (q_1) 
& = 
f_{g,I,A}(q_1) P(q_1)^{-(5g-5+2n
-(i_1+\cdots+i_n))}  \\ 
\parfrac{C^{(1)}(q_1)}{q_1^\alpha} 
& = 
f_{1,1,\alpha}(q_1) P(q_1)^{-1}
\end{split}  
\end{align} 
for some polynomials $f_{g,I,A}(q_1) \in R[V^\vee]$. 
Note that $5g-5+2n-(i_1+\cdots+i_n) 
= 3g-3+n - (i_1+ \cdots +i_n) +2g-2+n$ is always 
positive unless $(g,n)=(1,0)$.  
The genus-one term $C^{(1)}(q_1)$ is in general not 
a rational function. 
See Remark \ref{rem:genus-one-log} below 
in the case of Gromov--Witten theory. 
\end{remark} 

\begin{example} 
  \label{exa:AX}
  The total ancestor potential $\cA_X$ of $X$ defines an element of
  the Fock space $\Fock(H_X\otimes \Lambda[\![t]\!], \phi_0)$.  Here
  the ground ring $R$ is $\Lambda[\![t]\!]$; the $R$-module $V$ is
  $H_X\otimes \Lambda[\![t]\!]$; and the pairing
  $\pair{\cdot}{\cdot}_V$ is the Poincar\'e pairing, extended by
  $R$-linearity to take values in $R$.  The dilaton shift discussed in
  \S\ref{sec:dilaton} coincides with the identification
  \eqref{eq:generaldilatonshift}.  Tameness \eqref{eq:tameness}
  follows from the dimension formula $\dim \cMbar_{g,m} = 3g-3 +m$.
\end{example}

\begin{remark} 
\label{rem:genus-one-log} 
The genus-one ancestor potential of a smooth projective 
variety $X$ satisfies \cite{Dijkgraaf-Witten:meanfield}: 
\[
\bar\cF^{1}_t \Bigr|_{\by(z)=y_1 z} = 
-\frac{1}{24} \log  \sdet(-q_1*_t)  
\]
where $\sdet(-q_1*_t)$ denotes the superdeterminant 
of the quantum product on the 
total cohomology group $H^\bullet(X) = 
H^{\rm even}(X) \oplus H^{\rm odd}(X)$ 
(including the odd part). 
This follows from the localization of the integral 
to the locus of cycles of rational curves and 
$\int_{\cMbar_{1,1}} \psi = 1/24$. 
Therefore the genus-one potential itself is not rational 
in $q_1$, but its derivatives are rational. 
\end{remark} 

\begin{example}
  \label{exa:Apt}
  The ancestor potential $\cA_{\rm pt} = \cA_t$ 
  of a point does not depend on $t\in
  H_{\rm pt}$ and coincides with the descendant potential $\cZ_{\rm
    pt}$.  This is called the \emph{Witten--Kontsevich tau-function} and
  denoted by $\tau(\bq)$.  It defines a rational element of the Fock
  space with $V=R=\C$ and $\delta =1$.  
In fact, applying the Dilaton Equation, we find that:
  \[
  \frac{\partial^n \cF^g_{\text{\rm pt}}}
  { \partial y_{i_1} \cdots \partial y_{i_n}}
  \bigg|_{\by(z) = y_1 z} 
  =
  \begin{dcases}
    {- \frac{1}{24}} \log (-q_1) & \text{if $g=1$ and $n=0$} \\
    (-q_1)^{-(2g-2+n)} \corr{\psi_1^{i_1},\dots,\psi_n^{i_n}}_{g,n,0}^{\rm
      pt} & \text{otherwise}
  \end{dcases}
  \]
Hence we can take $P(q_1) = -q_1$. 
Note that $i_1+ \cdots +i_n =3g-3+n$ implies 
$2g-2+n \le 5g-5+2n- (i_1+\cdots+i_n)$. 
\end{example} 

\begin{remark}
  In view of Givental's formula (see
  \S\S\ref{sec:Giventalformulaanalytic}--\ref{sec:Giventalformulaformal})
  one may speculate that in general the total ancestor potential of
  $X$ is rational with discriminant
  $\det(-q_1*)$ (determinant on the even part $H_X$ 
with even parameter $t\in H_X$ and even $q_1\in H_X$). 
We will prove that this is the case whenever the
quantum cohomology of $X$ is semisimple: see Theorem
\ref{thm:Ageom=Aabs}.
\end{remark}

\begin{remark}
  Givental's Lagrangian cone $\mathcal{L}_X$ (see \cite{Givental:symplectic})
  has a singularity along a ``divisor'' which contains the vertex of the
  cone. Thus it is natural to conjecture that the higher genus
  descendant potentials of $X$ has poles only along that
  divisor. This is the rationality condition. 
\end{remark}

\begin{remark}
  \label{rem:ancestordivisor}
  Recall the definition of the genus-$g$ ancestor potential
  $\bar{\cF}^g_{X}$ in \eqref{eq:ancestor}.  Consider the completion
  $\cS$ of the polynomial ring $\Q\big[t^0,Q_1 e^{t^1}, \ldots, Q_r
  e^{t^r}, t^{r+1},t^{r+2},\ldots,t^N\big]$ with respect to the
  valuation $v$ defined by:
  \begin{align*}
    & v(t_i) = 1 && \text{$i=0$ or $r<i\leq N$}\\
    & v(Q_i e^{t^i}) = 1 && 1 \leq i \leq r 
  \end{align*}
  The Divisor Equation implies that $\bar{\cF}^g_{X}$, which \emph{a
    priori} is a formal power series in the variables $y_j^\beta$ with
  coefficients in:
  \[
  \Q\big[\!\big[Q_1,\ldots,Q_r\big]\!\big]\big[\!\big[t^0,\ldots,t^N\big]\!\big]
  \]
  is in fact a formal power series in the variables $y_j^\beta$ with
  coefficients in $\cS$.  Thus the total ancestor potential $\cA_X$
  defines an element of the Fock space $\Fock(H_X\otimes \cS,\phi_0)$.
\end{remark}

\begin{definition} 
\label{def:ancestor-converge} 
For $\epsilon >0$, define $\cS_\epsilon$ to be the subring of $\cS$
consisting of elements in $\cS$ which converge on the region:
\begin{equation} 
\label{eq:e-ball} 
\left\{|t^0|<\epsilon,\ |Q_1 e^{t^1}|<\epsilon, 
\cdots, |Q_r e^{t^r}|<\epsilon, \ 
|t^{r+1}|<\epsilon,\cdots, 
|t^{N}|<\epsilon \right\}.  
\end{equation} 
The ancestor Gromov--Witten potential $\cA_X$ is said to be
\emph{convergent} if it is a rational element of $\Fock(H_X\otimes
\cS_\epsilon, \phi_0)$ for some $\epsilon>0$.
\end{definition} 

\begin{remark}
  When $\cA_X$ is convergent in the sense of
  Definition~\ref{def:ancestor-converge}, each genus-$g$ ancestor
  potential $\bar{\cF}^g_X$ (see equation \ref{eq:ancestor}) is a
  power series in the variables $y_j^\beta$ with coefficients in
  $\cS_\epsilon$.  Furthermore in this case
  $\bar{\cF}^g_X|_{Q_1=\cdots=Q_r=1}$ is a formal power series in
  $y_j^\beta$ with coefficients in analytic functions on $\cM$, where
  $\cM$ is a neighbourhood \eqref{eq:LRLnbhd} of the large-radius
  limit point.
\end{remark}

\subsection{Propagator} 
Let $(V,\pair{\cdot}{\cdot}_V)$, 
$(W,\pair{\cdot}{\cdot}_W)$ be free $R$-modules 
with symmetric perfect pairings. 

\begin{definition}
  \emph{The Givental symplectic form} $\Omega_V$ is an antisymmetric
  bilinear form on $V(\!(z)\!)$ defined by:
  \begin{equation} 
    \label{eq:symplecticform} 
    \Omega_V(f_1, f_2) = \Res_{z=0} \pair{f_1(-z)}{f_2(z)}_V dz. 
  \end{equation} 
\end{definition}

\begin{notation}
  \label{not:A0}
  An $R[\![z]\!]$-linear isomorphism $A:V[\![z]\!] \to W[\![z]\!]$ can
  be expressed uniquely in the form $A = A_0 + A_1 z + A_2 z^2 +
  \cdots$ where $A_k \in \Hom_R(V,W)$.  We write the coefficients of
  this expansion as $A_k$, $k \geq 0$, and write $A$ as $A(z)$ when we
  wish to emphasize the dependence on $z$.
\end{notation}

\begin{definition} 
\label{def:unitarity} 
An isomorphism $A \colon V[\![z]\!]  \to W[\![z]\!]$ is said to be
\emph{unitary} if it is $R[\![z]\!]$-linear and satisfies:
\[
\pair{A(-z) v_1}{A(z) v_2}_W = \pair{v_1}{v_2}_V. 
\]
for all $v_1, v_2 \in V$.
\end{definition}

\begin{remark}
  An $R[\![z]\!]$-linear isomorphism $A \colon V[\![z]\!]  \to
  W[\![z]\!]$ is unitary if and only if the map $V(\!(z)\!) \to
  W(\!(z)\!)$ induced by $A$ intertwines the Givental symplectic
  forms.
\end{remark}

\begin{definition}[Propagator; cf.~Givental \cite{Givental:quantization}]
  Let $A:V[\![z]\!] \to W[\![z]\!]$ be a unitary isomorphism.  The
  \emph{propagator} for $A$ is a bivector field $\Delta$ on
  $V[\![z]\!]$ defined by
  \[
  \Delta = \sum_{i, j=0}^\infty 
  \sum_{\alpha,\beta = 0}^N
  \Delta^{(i,\alpha), (j,\beta)} \parfrac{}{q_i^\alpha} 
  \parfrac{}{q_j^\beta}
  \]
  where:
  \[
  \sum_{i, j=0}^\infty
  \Delta^{(i,\alpha),(j,\beta)} (-1)^{i+j}
  w^i z^j = \Pair{\phi^\alpha}{ 
    \frac{A(w)^\dagger A(z) - \Id}{z+w} \phi^\beta}_V
  \]
  Here the co-ordinates $q_i^\alpha$ and the basis $\{\phi^\alpha\}$
  are defined above Remark~\ref{rem:HXexample}; $\Delta$ is in fact
  independent of choice of basis.
\end{definition}

\subsection{Quantized Operator}
Let $A \colon V[\![z]\!] \to W[\![z]\!]$ be a unitary isomorphism.
Recall the definition of $A_0$ in Notation~\ref{not:A0} above.  We
define the quantized operator
\[
\widehat{A}: \Fock(V, \delta) \to \Fock(W, A_0(\delta))
\]
as follows.  For a given element $\cA \in \Fock(V,\delta)$, we set:
\[
\tcA = \exp\left(\textstyle\frac{\hbar}{2} \Delta \right) \cA 
\in \Fock(V,\delta) 
\]
and then push $\tcA$ forward along the identification $A(z) \colon
V[\![z]\!] \cong W[\![z]\!]$
\[
(\widehat{A} \cA) ( \bq ) := \tcA(A(z)^{-1} \bq(z)).
\]

\begin{proposition}
  \label{pro:Feynman}
  The quantized operator $\widehat{A}$ is well-defined.  Moreover, if
  $\cA$ is a rational element of $\Fock(V, \delta)$ 
  with discriminant $P(q_1) \in R[V^\vee]$ then $\widehat{A}\cA$ is a
  rational element of $\Fock(W, A_0(\delta))$ with 
  discriminant $P(A_0^{-1}q_1) \in R[W^\vee]$
\end{proposition}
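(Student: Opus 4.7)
My plan is to verify the Fock-space axioms in three stages: well-definedness of the propagator operator $\exp(\tfrac{\hbar}{2}\Delta)$; the $R[\![z]\!]$-linear substitution $\bq \mapsto A(z)^{-1}\bq$; and the preservation of rationality data. To begin, unitarity translates to the matrix identity $A(-z)^\dagger A(z) = \Id_V$, so $A(w)^\dagger A(z) - \Id$ vanishes on the locus $z+w = 0$ and $\tfrac{A(w)^\dagger A(z)-\Id}{z+w}$ lies in $\End(V)[\![z,w]\!]$. Thus the coefficients $\Delta^{(i,\alpha),(j,\beta)}$ belong to $R$ and $\Delta = \sum \Delta^{(i,\alpha),(j,\beta)} \partial_{q_i^\alpha} \partial_{q_j^\beta}$ is a constant-coefficient second-order differential operator on $R[\![\by]\!]$.

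Next I would apply Wick's theorem to compute $\tcA = \exp(\tfrac{\hbar}{2}\Delta)\cA$. Writing $\cA = \exp(\sum_g \hbar^{g-1}\cF^g)$, one obtains $\tcA = \exp(\sum_g \hbar^{g-1}\tilde\cF^g)$ where $\tilde\cF^g$ is a sum over connected graphs with vertices $v$ labeled by genera $h_v$ (contributing $\cF^{h_v}$), edges weighted by propagator components, and total genus $g = \sum_v h_v + b_1(\text{graph})$. The vanishing of the $2$-jet of $\cF^0$ and of the constant term of $\cF^1$ at $\by = 0$ ensures that only finitely many graphs contribute modulo any fixed $\by$-valuation, so the sum converges in $R[\![\by]\!]\{\hbar^{-1},\hbar]\!]$. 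Tameness \eqref{eq:tameness} descends to each $\tilde\cF^g$ by a standard Feynman-graph accounting: edges contract pairs of derivatives that redistribute descendant indices while forming loops accounting for genus growth, and the inequality $\sum i_l \leq 3g-3+n$ is preserved.

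The substitution step is handled as follows. Expanding $A(z)^{-1} = M_0 + M_1 z + \cdots$ with $M_0 = A_0^{-1}$, one finds $A(z)^{-1}(-A_0\delta z) = -\delta z - M_1 A_0 \delta z^2 - \cdots$, so in the dilaton-shifted coordinates the $W$-origin $\by^W = 0$ maps to the $V$-point $\by^V$ with $y^V_0 = y^V_1 = 0$ and $y^V_i = - M_{i-1} A_0 \delta$ for $i \geq 2$. Since $A(z)^{-1}$ is triangular in the $z$-filtration, the change of variables $\bq^W \mapsto A(z)^{-1}\bq^W$ yields a well-defined element of $R[\![\by^W]\!]\{\hbar^{-1},\hbar]\!]$ when one evaluates $\tcA$. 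The $2$-jet of the new $\cF^0$ at $\by^W = 0$ vanishes because all Taylor derivatives of $\cF^0_{\tcA}$ involving only the variables $y^V_i$ with $i \geq 2$ vanish by the tameness bound ($\sum i_l > n-3$ whenever every $i_l \geq 2$), so the shift to $y^V_i = -M_{i-1}A_0\delta$ while keeping $y^V_0 = y^V_1 = 0$ preserves the vanishing of the $2$-jet; the constant term of the new $\cF^1$ vanishes by the analogous argument.

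For rationality, restricting $\by^W(z) = y_1^W z$ pulls back to $\bq^V(z) = (A_0^{-1} y_1^W - \delta)z + O(z^2)$, so $q_1^V = A_0^{-1} q_1^W$ where $q_1^W = y_1^W - A_0 \delta$, and the higher-order $z$-contributions specialize $y^V_i$ ($i \geq 2$) to constants. The Wick expansion preserves rationality because $\Delta$ has $R$-coefficients and its exponential only contracts derivatives without introducing new $q_1$-dependence, so $\tcA$ is rational on the $V$-side with the same weight $c$ and discriminant $P$. After substitution the discriminant becomes $P(A_0^{-1} q_1^W)$, with normalization $P(A_0^{-1}(-A_0\delta)) = P(-\delta) = 1$ inherited from $\cA$. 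The main obstacle I expect is the tameness preservation under the Wick expansion in step two: the descendant-budget inequality $\sum i_l \leq 3g-3+n$ must propagate through graph contractions and loop formation, and this requires careful combinatorial accounting comparing the budget lost at each edge to the genus gained by its cycle or by the vertex it connects to.
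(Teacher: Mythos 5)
Your proposal follows essentially the same route as the paper: the well-definedness claim is handled by the Feynman-graph expansion of $\exp(\tfrac{\hbar}{2}\Delta)$ exactly as in Givental \cite{Givental:An}*{Proposition~5} (which the paper simply cites), the rationality claim follows from the fact that each vertex term is a restricted derivative of some $\cF^{g_v}$ of the form $f(q_1)/P(q_1)^K$ while the propagator entries are constants, and the change of variables $\bq \mapsto A(z)^{-1}\bq$ sends $q_1$ to $A_0^{-1}q_1$. Your combinatorial check that the stability condition at each vertex implies $\sum_l i_l \le 3g - 3 + n$ for the external legs is precisely the Euler-characteristic accounting that the paper delegates to Givental's cited proposition, so flagging it as the main obstacle is apt rather than evidence of a missing idea.

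One small imprecision in your substitution step: after restricting $\by^W(z)=y_1^W z$, the point $\bq^V(z)=A(z)^{-1}q_1^W z$ has $y_i^V = M_{i-1}q_1^W$ for $i\ge 2$, which is \emph{linear} in $q_1^W$, not constant. The conclusion survives because tameness forces each $\tcF^g$ to be polynomial in the variables $y_i^V$ with $i\ge 2$, so substituting these linear expressions still produces a function of the form $f(q_1^W)/P(A_0^{-1}q_1^W)^K$; this extra observation should be made explicit, since the paper's ``evidently preserves rationality'' also elides it.
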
 

\begin{proof}
  The first claim was proved by Givental using a Feynman diagram
  argument \cite{Givental:An}*{Proposition~5}.  It remains to show
  that the quantized operator $\widehat{A}$ preserves rationality, and
  to calculate its effect on the discriminant.  Recall that
  $\tcA = \exp\left(\textstyle\frac{\hbar}{2} \Delta \right) \cA$, and
  define $\tcF^g$ by:
  \[
  \tcA = \exp \left( \sum_{g=0}^\infty \hbar^{g-1} \tcF^{g} \right)
  \]
  Following Givental's proof, we express:
  \begin{equation}
    \label{eq:partialtcFg}
    \frac{\partial^n \tcF^g}
    { \partial y_{i_1}^{\alpha_1} \cdots \partial y_{i_n}^{\alpha_n}}
    \bigg|_{\by(z) = y_1 z} 
  \end{equation}
  as a sum over decorated Feynman graphs.  These decorated Feynman
  graphs are connected multigraphs, in which loops are allowed, such
  that:
  \begin{itemize} 
  \item each vertex $v$ is labelled by an integer $g_v\ge 0$;
  \item a label $(j,\beta)\in \Z_{\ge 0}\times \{0,\dots,N\}$ is
    assigned to each pair of a vertex and an edge incident to it (for
    an edge-loop, we distinguish the two ends of the edge);
  \item the graph has $n$ external edges, called \emph{legs}, labelled
    by $(i_1,\alpha_1),\dots,(i_n,\alpha_n)$;
  \item the Euler number $\chi$ of the graph satisfies $g = 1- \chi +
    \sum_{v:\text{vertex}} g_v $.
  \end{itemize} 
  and such that the following stability condition holds: for each
  vertex $v$, if $(j_1,\beta_1),\dots,(j_m,\beta_m)$ are all the
  labels attached to the edges or legs incident to $v$, then:
  \[
  j_1 + \dots + j_m \le 3 g_v -3 + m
  \]
  Givental's original argument shows that the number of such decorated
  Feynman graphs is finite.  Let $\Gamma$ be a decorated Feynman graph
  as above, and let $V(\Gamma)$, $E(\Gamma)$ be respectively the set
  of vertices and the set of edges of $\Gamma$.  The contribution of
  $\Gamma$ to \eqref{eq:partialtcFg} is:
  \begin{equation}
    \label{eq:Gammacontribution}
    \frac{1}{|\Aut(\Gamma)|} \prod_{e \in E(\Gamma)} (\text{edge term
      for $e$}) \prod_{v \in V(\Gamma)} (\text{vertex term for $v$})
  \end{equation}
  where the edge term for an edge with labels $(i,\alpha)$, $(j,\beta)$
  is $\Delta^{(i,\alpha),(j,\beta)}$, and the vertex term for a vertex
  $v$ with labels $(j_1,\beta_1)$,\dots,$(j_m,\beta_m)$ is:
  \begin{equation} 
    \label{eq:derivativeofFg} 
    \frac{\partial^n \cF^{g_v}}{\partial y_{j_{1}}^{\beta_{1}}
      \cdots \partial y_{j_m}^{\beta_m}}\Bigg|_{\by(z) =y_1 z}
  \end{equation} 
  We write $n_v= m$ and $d_v = j_1+ \cdots + j_m$. 
  Suppose that $\cA$ is rational with discriminant
  $P(q_1)$.  
The partial derivative \eqref{eq:partialtcFg}
  is a finite sum of terms \eqref{eq:Gammacontribution}, and each
  vertex term \eqref{eq:derivativeofFg} takes the form:
  \begin{equation}
    \label{eq:theformwewant}
    \frac{f_v(q_1)}{P(q_1)^{5g_v-5+2n_v- d_v}}
  \end{equation}
  where $f_v$ is a polynomial. 
Using the (in)equalities: 
\[
\sum_{v\in V(\Gamma)} (g_v- 1)  = g-1 - |E(\Gamma)|,  
\quad \sum_{v\in V(\Gamma)} n_v  =2 |E(\Gamma)| + n, 
\quad 
\sum_{v\in V(\Gamma)} d_v \ge i_1+\cdots +i_n 
\]
we have 
\[
\sum_{v\in V(\Gamma)} (5g_v -5 + 2n_v - d_v)
\le 5g-5 + 2n - (i_1+ \cdots + i_n)
\]
Hence each term \eqref{eq:Gammacontribution} is a rational function with 
denominator $P(q_1)^{5g-5+2n - (i_1+\cdots +i_n)}$. 
It follows that $\tcA$ is rational with
discriminant $P(q_1) \in R[V^\vee]$.  
The change of variables $\bq(z) \to A(z)^{-1} \bq(z)$ 
preserves tameness and rationality: one can easily 
check that the expansion in Remark \ref{rem:rationality} 
is preserved. 
Thus $\widehat{A}\cA$ is rational, with 
discriminant $P(A_0^{-1}q_1) \in R[W^\vee]$.
\end{proof} 

\begin{example} Figure~1 below shows an example of a decorated Feyman
  diagram $\Gamma$.

  \begin{figure}[htbp]
    \begin{center} 
      \begin{picture}(300,50)(-30,0) 
        \put(0,25){\circle*{3}} 
        \put(100,25){\circle*{3}}
        \put(200,25){\circle*{3}}
        {\thicklines
          \put(225,25){\circle{50}} 
          \path(0,25)(200,25) 
          \put(100,25){\vector(0,-1){20}}
        }
        \put(0,32){\makebox(0,0){$g_1$}}
        \put(100,32){\makebox(0,0){$g_2$}} 
        \put(193,32){\makebox(0,0){$g_3$}} 
        \put(15,18){\makebox(0,0){\small $(i,\alpha)$}} 
        \put(82,18){\makebox(0,0){\small $(j,\beta)$}}
        \put(118,18){\makebox(0,0){\small $(k,\gamma)$}}
        \put(183,18){\makebox(0,0){\small $(l,\epsilon)$}}
       \put(218,36){\makebox(0,0){\small $(m,\rho)$}}
        \put(218,14){\makebox(0,0){\small $(n,\mu)$}}
        \put(100,0){\makebox(0,0){\small $(p,\xi)$}}
      \end{picture} 
    \end{center} 
    \caption{A decorated graph with one leg.} 
    \label{fig:decoratedgraph} 
  \end{figure}
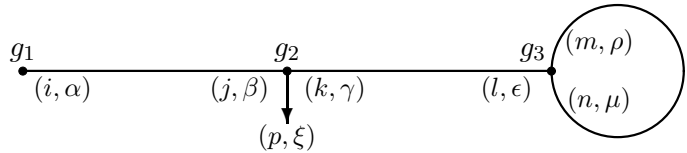

  \noindent This graph $\Gamma$ has one leg, labelled by $(p,\xi)$; it
  occurs in the Feynman sum for:
  \begin{align*}
    \frac{\partial \tcF^{g}}{\partial y_{p}^{\xi}} \Bigg|_{\by(z) =y_1
      z} && \text{where $g = g_1+g_2+g_3+1$.}  
  \end{align*}
  The stability condition asserts that $i \le 3g_1-2$, $j + k + p \le
  3 g_2$, and $l+m+n\le 3g_3$.  The automorphism group of $\Gamma$ is
  trivial if $(m,\rho) \ne (n,\mu)$, and is equal to $\Z/2\Z$ if
  $(m,\rho) = (n,\mu)$.  Thus the contribution of $\Gamma$ to
  the Feynman sum is equal to:
  \[
  \Delta^{(i,\alpha),(j,\beta)} 
  \Delta^{(k,\gamma),(l,\epsilon)} 
  \Delta^{(m,\rho),(n,\mu)}
  \Bigg(
  \frac{\partial \cF^{g_1}}{\partial y_i^\alpha} 
  \frac{\partial^3 \cF^{g_2}}
  {\partial y_j^\beta \partial y_k^\gamma \partial y_p^\xi}  
  \frac{\partial^3 \cF^{g_3}}
  {\partial y_l^\epsilon \partial y_m^\rho \partial y_n^\mu} 
  \Bigg)\Bigg|_{\by(z)=y_1 z}.
  \]
  if $(m,\rho) \ne (n,\mu)$, and is equal to half of this if $(m,\rho)
  = (n,\mu)$.
\end{example}

\begin{remark} 
  Let $(U,\pair{\cdot}{\cdot}_U)$ be another free $R$-module with a
  perfect pairing.  Let $A \colon V[\![z]\!] \to W[\![z]\!]$ and $B
  \colon W[\![z]\!] \to U[\![z]\!]$ be unitary isomorphisms.  Then one
  can define three propagators $\Delta^A$, $\Delta^{B}$, $\Delta^{BA}$
  corresponding to the maps $A$, $B$, $BA$ respectively.
  The bivector fields $\Delta^A$ on $V[\![z]\!]$, $\Delta^{B}$ on $W[\![z]\!]$, and
  $\Delta^{BA}$ on $V[\![z]\!]$ satisfy:
  \[
  \Delta^{BA} = \Delta^A + A(z)^* \Delta^B. 
  \]
  Therefore 
  \[
  (BA)\sphat = \widehat{B} \widehat{A}
  \]
  as a map from $\Fock(V,\delta)$ to $\Fock(U,B_0A_0(\delta))$.  
\end{remark}

\section{Givental's formula in the Analytic Setting}
\label{sec:Giventalformulaanalytic}
Let $\cM$ be an analytic Frobenius manifold over $\C$. 
This comprises the following
data: a smooth complex analytic space $\cM$; 
a flat metric\footnote
{Metric here means $\C$-bilinear quadratic form on each
tangent space $T_t M$, varying analytically with $t$.} $g$ on $\cM$;
a product $\ast_t$ on each tangent space $T_t \cM$, varying
analytically with $t$; 
a flat identity vector field $\unit$; 
a vector field $E$ on $\cM$ called the Euler
vector field; and an integer $D$ called the conformal dimension.
These structures are required to satisfy a number of conditions: see
\cite{Dubrovin:2DTFT}*{Definition~1.2}.  In particular $(T_t \cM,
\ast_t, g)$ forms a family of commutative associative Frobenius
algebras, varying analytically with $t$, and $\nabla^{\LC}
\big(\nabla^{\LC} E\big) = 0$ where $\nabla^{\LC}$ is the
Levi--Civita connection defined by $g$.  The operator $\mu:T\cM \to
T\cM$ defined by $\mu = \big(1-\frac{D}{2}\big)\Id - \nabla^{\LC} E$
is called the grading operator.  One example of an analytic Frobenius
manifold over $\C$ is given by the quantum cohomology of a smooth
variety $X$ such that the genus-zero Gromov--Witten potential
converges in the sense of \S\ref{sec:convergence}; in this case $\cM$
is the neighbourhood \eqref{eq:LRLnbhd} of the large-radius limit
point.

Suppose further that $\cM$ is \emph{generically semisimple}, 
i.e.~that $\big(T_t \cM,\ast_t\big)$ is a semisimple algebra for generic
$t \in \cM$, and fix a semisimple point $t$. 
The eigenvalues of multiplication $\big({E\ast}\big)$ 
by the Euler vector field form \emph{canonical co-ordinates} 
$u^0, \ldots, u^N$ on a neighbourhood of $t$. 
The vector fields $\parfrac{}{u^i} \in T\cM$ are then the idempotents
in the semisimple algebra $\big(T\cM,\ast \big)$ 
in a neighbourhood of $t$. 
Let:
\[
\Delta^i(t) = {1 \over
  g\Big(\left.\parfrac{}{u^i}\right|_t,\left.\parfrac{}{u^i}\right|_t\Big)}
\]

\begin{proposition}[Dubrovin \protect{\cite[Lecture~4]{Dubrovin:Painleve}},
  Teleman \protect{\cite[Theorem~8.15]{Teleman}}]
  \label{pro:R}
  At the semisimple point $t \in \cM$, the
  equation:
  \[
  \Bigg( z \parfrac{}{z} + \frac{1}{z} \big({E
  \ast_t}\big) + \mu \Bigg) S = 0
  \]
  has a unique solution of the form $S = \Psi_t R_t \exp(U/z)$ such that:
  \begin{enumerate}
  \item $\Psi_t \in \Hom\big(\C^{N+1},T_t \cM\big)$ is the isomorphism
    $\C^{N+1} \cong T_t \cM$ that sends the $i$th standard basis vector
    in $\C^{N+1}$ to the $i$th normalized idempotent 
    $\sqrt{\Delta^i(t)} \parfrac{}{u^i} \in T_t \cM$
  \item $R_t \in \End\big(\C^{N+1}\big) \otimes \C[\![z]\!]$ with $R_t
    \equiv \Id \mod z$
  \item $U = \operatorname{diag}\big(u^0,\ldots,u^N\big)$ where 
  $u^0,\dots,u^N$ are the eigenvalues of $E*_t$. 
  \end{enumerate}
  The transformation $R_t$ satisfies:
  \[
  R_t(-z)^{\mathrm{T}} R_t(z) = \Id
  \]
\end{proposition}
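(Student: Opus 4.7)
The plan is to substitute the ansatz $S = \Psi_t R_t \exp(U/z)$ into the ODE and derive a recursion for the coefficients of $R_t = \sum_{k\ge 0} R_k z^k$. Setting $\tilde\mu := \Psi_t^{-1}\mu\Psi_t$ and using $(E\ast_t)\Psi_t = \Psi_t U$, a direct computation converts the equation for $S$ into
\[
z^2 R_t' + [U, R_t] + z\, \tilde\mu R_t = 0,
\]
so that equating coefficients of $z^n$ (with $R_0 = \Id$) yields $[U, R_n] = -(n-1) R_{n-1} - \tilde\mu R_{n-1}$ for $n \ge 1$. Because $t$ is semisimple, the canonical coordinates $u^0,\dots,u^N$ are pairwise distinct, so $[U,\cdot]$ acts invertibly on off-diagonal matrices and the recursion determines the off-diagonal part of $R_n$ from $R_{n-1}$.

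The crucial ingredient is the antisymmetry of $\tilde\mu$, which follows from the standard $g$-antisymmetry of $\mu$ together with the fact that $\Psi_t$ identifies the orthonormal basis of $\C^{N+1}$ with the $g$-orthonormal frame $\sqrt{\Delta^i(t)}\,\partial/\partial u^i$. Taking the diagonal of the level-$(n{+}1)$ equation, and using that $\tilde\mu$ has zero diagonal, produces the compatibility
\[
n\,\diag(R_n) = -\diag(\tilde\mu R_n),
\]
whose right-hand side depends only on the off-diagonal entries of $R_n$ already determined. This forces the diagonal of $R_n$ for each $n \ge 1$ and so gives both existence and uniqueness of $R_t$; note that the $n=0$ case of the compatibility reduces to $\diag(\tilde\mu) = 0$, which is automatic and is precisely what lets the recursion start.

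For the unitarity identity, I set $\tilde R(z) := \bigl(R_t(-z)^{\mathrm{T}}\bigr)^{-1}$. Taking the transpose of the ODE at $-z$, using $U^{\mathrm{T}}=U$ and $\tilde\mu^{\mathrm{T}} = -\tilde\mu$, and then differentiating the relation $\tilde R \cdot R_t(-z)^{\mathrm{T}} = \Id$, a short calculation shows that $\tilde R$ satisfies the same ODE as $R_t$ and that $\tilde R(0) = \Id$. The uniqueness proved above then forces $\tilde R = R_t$, which is the claim $R_t(-z)^{\mathrm{T}} R_t(z) = \Id$. The main conceptual point of the whole proof is thus the antisymmetry of $\tilde\mu$; once this is in place, the remaining steps are formal recursion and bookkeeping.
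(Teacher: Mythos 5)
The paper itself does not prove this proposition — it simply cites Dubrovin (Painlev\'e lectures, Lecture~4) and Teleman (Theorem~8.15) — so there is no in-paper proof to compare against. Your argument is correct and is essentially the standard one from those references: substituting $S=\Psi_t R_t e^{U/z}$ and conjugating by $\Psi_t$ indeed gives $z^2 R_t' + [U,R_t] + z\tilde\mu R_t = 0$; the recursion $[U,R_n] = -(n-1)R_{n-1}-\tilde\mu R_{n-1}$ with the off-diagonal part fixed by invertibility of $[U,\cdot]$ (using distinctness of the $u^i$ at the chosen semisimple point) and the diagonal part fixed by the next-level compatibility $n\,\diag R_n = -\diag(\tilde\mu R_n)$ establishes existence and uniqueness, with the base case working because $\tilde\mu$ is antisymmetric in the $\Psi_t$-frame (since $\mu$ is $g$-antisymmetric and $\{\sqrt{\Delta^i}\,\partial/\partial u^i\}$ is $g$-orthonormal); and the unitarity relation follows cleanly by checking that $\tilde R(z)=(R_t(-z)^{\mathrm{T}})^{-1}$ solves the same ODE with $\tilde R(0)=\Id$ and invoking uniqueness. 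The one hypothesis you use implicitly, that the canonical coordinates $u^0,\dots,u^N$ are pairwise distinct at $t$, is not a formal consequence of semisimplicity of $(T_t\cM,\ast_t)$ alone, but is part of the standard ``semisimple point'' convention in this setting (and is what makes $u^0,\dots,u^N$ a local coordinate system, as the paper assumes); it would be worth stating explicitly. With that caveat, the proof is complete.
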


The transformations $\Psi$ and $R$ in Proposition~\ref{pro:R} coincide
with those defined by Givental \cite{Givental:semisimple}*{\S1.3}, 
although his definitions are 
different as he is working in a setting where there may be no Euler
vector field.  As Dubrovin observed, $\Psi_t R_t \exp(U/z)$ is
automatically flat with respect to the Dubrovin connection 
as $t$ varies and, as $t$ varies, 
$R_t$ is automatically homogeneous with respect to 
the Euler vector field $E=\sum_{i=0}^N u^i \parfrac{}{u^i}$: 
\[
\Bigg(z \parfrac{}{z} + \sum_i u^i \parfrac{}{u^i}\Bigg) R_t = 0
\]

We regard the composite map $\Psi_t R_t$ as giving a 
unitary isomorphism
$\C^{N+1} [\![z]\!] \to T_t\cM[\![z]\!]$ 
where $\C^{N+1}$ is endowed with the standard inner product 
(see Definition \ref{def:unitarity}). 
In view of Example~\ref{exa:Apt}, 
we know that the product of Witten--Kontsevich 
$\tau$-functions:
\[
\cT = \prod_{\alpha=0}^{N} \tau(\bq^\alpha) \qquad 
\text{where} \quad  
(\bq^0,\dots,\bq^N) \in \C^{N+1} [\![z]\!] 
\] 
lies in the Fock space $\Fock(\C^{N+1},(1,\dots,1))$. 
It is rational with with the discriminant 
$P(q_1^0,\dots,q_1^N) = \prod_{\alpha=0}^N (-q_1^\alpha)$. 

\begin{definition}[Givental \protect{\cite[\S6.8]{Givental:quantization}}]
  \label{def:ancestor}
  The \emph{abstract ancestor potential} $\cAabs_t$ is:
  \begin{equation}
    \label{eq:Aabs}
    \cAabs_t = e^{{-\frac{1}{48}} \sum_i \log
      \Delta^i(t)}\,
    \widehat{\Psi_t} \widehat{R_t} (\cT)
  \end{equation}
  When the semisimple point $t \in \cM$ is clear from context, we will
  write $\cAabs$ instead of $\cAabs_t$.  
\end{definition}

\begin{proposition}
  \label{pro:Aabs}
  The abstract ancestor potential $\cAabs_t$ is a well-defined
 rational element of $\Fock(T_t \cM, \unit)$, with 
discriminant $\det(-q_1*_t)$.
\end{proposition}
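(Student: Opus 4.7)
The plan is to apply Proposition~\ref{pro:Feynman} in succession to $\widehat{R_t}$ and $\widehat{\Psi_t}$, starting from the product $\cT$ of Witten--Kontsevich tau-functions, and then to account for the scalar prefactor in Definition~\ref{def:ancestor}. The three technical inputs are: (i) unitarity of both $\Psi_t$ and $R_t$; (ii) an explicit computation of how the discriminant transforms under $\widehat{\Psi_t}$; and (iii) verifying that the prefactor exactly corrects the remaining normalization.

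Unitarity of $R_t$ is immediate from the identity $R_t(-z)^{\mathrm{T}} R_t(z) = \Id$ in Proposition~\ref{pro:R}. For $\Psi_t$ (a $z$-independent, hence $\C[\![z]\!]$-linear, map), I would use the Frobenius property of $g$ together with orthogonality of the idempotents $\parfrac{}{u^i}$ to conclude $g\bigl(\parfrac{}{u^i},\parfrac{}{u^j}\bigr) = \delta^{ij}/\Delta^i(t)$; then the normalized idempotents $\sqrt{\Delta^i(t)}\parfrac{}{u^i}$ form a $g$-orthonormal basis of $T_t\cM$, so $\Psi_t$ intertwines the standard inner product on $\C^{N+1}$ with $g$. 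By the composition property at the end of \S\ref{sec:quantization}, Proposition~\ref{pro:Feynman} then applies to $\widehat{\Psi_t}\widehat{R_t}$, giving well-definedness of $\cAabs_t$ (the scalar prefactor is manifestly harmless) and, since $\cT$ is rational by Example~\ref{exa:Apt}, preservation of rationality.

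To compute the weight and discriminant I would proceed as follows. Example~\ref{exa:Apt} shows $\cT \in \Fock(\C^{N+1},(1,\ldots,1))$ is rational with weight $-\tfrac{1}{24}$ and discriminant $P(q_1) = \prod_{\alpha}(-q_1^\alpha)$. Because $(R_t)_0 = \Id$, Proposition~\ref{pro:Feynman} gives that $\widehat{R_t}\cT$ has the same weight and discriminant. Applying $\widehat{\Psi_t}$ yields discriminant $P(\Psi_t^{-1} q_1)$; expanding a general $q_1 \in T_t \cM$ in the idempotent basis as $q_1 = \sum_i c^i \parfrac{}{u^i}$ one has $\Psi_t^{-1}q_1 = \sum_i (c^i/\sqrt{\Delta^i(t)})\, e_i$, and since $q_1 \ast_t$ is diagonal with eigenvalues $c^i$, this equals $\det(-q_1\ast_t)$ divided by $\prod_i \sqrt{\Delta^i(t)}$. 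Multiplication by $e^{-\frac{1}{48}\sum_i \log\Delta^i(t)}$ shifts the constant term of $\cF^1\big|_{\by(z) = y_1 z}$ by exactly the amount needed to cancel that $\prod_i \sqrt{\Delta^i(t)}$ factor, leaving weight $-\tfrac{1}{24}$ and discriminant $\det(-q_1\ast_t)$.

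The step I expect to require the most care is the base-point bookkeeping. Proposition~\ref{pro:Feynman} naturally delivers $\cAabs_t$ at base point $(\Psi_t R_t)_0(1,\ldots,1) = \Psi_t(1,\ldots,1) = \sum_i \sqrt{\Delta^i(t)}\parfrac{}{u^i}$, whereas the claim places it at $\unit = \sum_i \parfrac{}{u^i}$. The normalizations are at least internally consistent, since $\det(-q_1\ast_t)\big|_{q_1 = -\unit} = \det(\Id) = 1$ is the required value $P(-\unit) = 1$ for a discriminant in $\Fock(T_t\cM,\unit)$; the remaining task is to check, using the dilaton equation for $\tau$ together with the scalar prefactor, that these two descriptions agree as elements of the appropriate Fock space.
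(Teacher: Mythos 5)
Your overall line of attack---apply Proposition~\ref{pro:Feynman} through $\widehat{\Psi_t}\widehat{R_t}$, track weight and discriminant, and show that the scalar prefactor corrects the discriminant to $\det(-q_1*_t)$---is the same as the paper's, and your computation $\prod_i\bigl(-[\Psi_t^{-1}q_1]^i\bigr) = \det(-q_1*_t)/\prod_i\sqrt{\Delta^i(t)}$ matches the paper's. However, there are two genuine gaps.

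First, you assert that well-definedness of $\cAabs_t$ ``follows from Proposition~\ref{pro:Feynman}'', but that proposition only establishes well-definedness of $\widehat{A}$ for a \emph{fixed} unitary $A$. The serious issue, which the paper devotes the first half of its proof to, is that $\Psi_t$ and $R_t$ are not canonically determined: they depend on a choice of ordering of the canonical co-ordinates $u^0,\ldots,u^N$ and a choice of square roots $\sqrt{\Delta^i(t)}$. Two such choices of $\Psi_tR_t$ differ by right multiplication by a signed permutation matrix. One must then check that $\cT$ is invariant under the corresponding signed permutation of the $(\bq^0,\ldots,\bq^N)$ \emph{except} for the genus-one $\log$ term $-\tfrac{1}{24}\sum_\alpha\log(-q_1^\alpha)$, and that the residual constant ambiguity is cancelled by the ambiguity of the prefactor $e^{-\frac{1}{48}\sum_i\log\Delta^i(t)}$ (with the genus-one term pinned down by the normalization $\cF^1_{\abs}|_{\by(z)=0}=0$). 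Your proof has no analogue of this argument.

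Second, you correctly flag the base-point discrepancy---Proposition~\ref{pro:Feynman} hands you an element of $\Fock\bigl(T_t\cM,\sum_i\sqrt{\Delta^i(t)}\parfrac{}{u^i}\bigr)$ whereas the claim wants $\Fock(T_t\cM,\unit)$---but you leave the resolution open and gesture at ``the dilaton equation for $\tau$'', which is not the mechanism. The paper's point is precisely that \emph{rationality} does the work: the partial derivatives of $\cF^g$ restricted to $\by(z)=y_1z$ are rational in $q_1$ with denominators given by powers of the discriminant $P(q_1)$; since $P(-\unit)=1\ne 0$, these functions extend by analytic continuation from a neighbourhood of $q_1=-\sum_i\sqrt{\Delta^i(t)}\parfrac{}{u^i}$ to a neighbourhood of $q_1=-\unit$, which is exactly what is needed to regard $\cAabs_t$ as an element of $\Fock(T_t\cM,\unit)$. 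So the base-point change is a consequence of rationality, not something to be checked independently by a dilaton-equation argument.
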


\begin{proof}
  We first observe that the right-hand side of \eqref{eq:Aabs} is
  unambiguous.  The matrices $\Psi_t$ and $R_t$ depend on:
  \begin{itemize}
  \item a choice of ordering of the canonical co-ordinates
    $u^0,\dots,u^N$ at $t$; and
  \item the choice of square roots $\sqrt{\Delta^i(t)}$.
  \end{itemize}
  Thus any two different choices of $\Psi_tR_t$ are related by right
  multiplication by a signed permutation matrix.  Now $\cT$ is almost
  invariant under a signed permutation $(\bq^0,\dots,\bq^N) \mapsto
  (\pm \bq^{\sigma(0)},\dots,\pm \bq^{\sigma(N)})$: the only
  non-invariant part is the genus-one log-term $-\frac{1}{24}
  \sum_{\alpha} \log (-q_1^\alpha)$.  The constant ambiguity in this
  genus-one term cancels with the ambiguity of $-\frac{1}{48}\sum_i
  \log \Delta^i(t)$; the genus-one term $\cF^1_{\abs}$ in $\log \cAabs_t$ is
  normalized by the condition:
  \[
  \cF^1_{\abs}\big|_{\by(z)=0} = 0
  \]
  Thus $\cAabs_t$ is independent of all choices.

  Proposition~\ref{pro:Feynman} implies that $\widehat{\Psi_t}
  \widehat{R_t} (\cT)$ is a rational element of $\Fock\big(T_t\cM,
  \sum_{i=0}^N \sqrt{\Delta^i(t)} \parfrac{}{u^i})\big)$ with discriminant:
  \[
  \prod_{i=0}^N (-[\Psi_t^{-1} q_1]^i) 
  \]
  where $q_1 \in T_t\cM$. Because $\widehat{\Psi_t} \widehat{R_t}
  (\cT)$ is rational, $\cAabs_t = e^{{-\frac{1}{48}} \sum_i \log
    \Delta^i(t)}\, \widehat{\Psi_t} \widehat{R_t} (\cT)$ can naturally
  be regarded, via analytic continuation, as an element of
  $\Fock(T_t\cM,\unit)$. 
  One can normalize the discriminant by the non-zero factor 
  $e^{\frac{1}{2} \sum_i \log \Delta^i(t)}$ 
  \begin{align*}
    P(q_1) & = e^{\frac{1}{2} \sum_i \log\Delta^i(t)} 
    \prod_{i=0}^N (-[\Psi_t^{-1} q_1]^i) \\
    & = 
    \det( - q_1 *_t) 
  \end{align*}
  so that $P(-\unit) =1$. 
\end{proof}

\begin{remark} 
  \label{rem:absan-analytic} 
  When $t$ varies, $\cAabs_t$ defines a rational element of
  $\Fock(T\cM(U),\unit)$ with $U$ a neighbourhood of $t$. Here $T\cM(U)$
  is regarded as a free $\cO(U)$-module.
\end{remark} 

\begin{remark} \label{rem:Rfield}
  The transformation $R_t = I + R_1(t) z + R_2(t) z^2 + \cdots$ in
  Proposition~\ref{pro:R} can be determined by solving the equations:
  \[
  \Bigg( z \parfrac{}{z} + \frac{1}{z} \big({E
  \ast_t}\big) + \mu \Bigg) \Psi_t R_t \exp(U/z) = 0
  \]
  order by order in $z$.  It follows, and this will be important
  below, that if the canonical co-ordinates $u^i$ and the matrix entries
  of $\Psi_t$ all lie in some field of functions $k$, then the entries of
  each matrix $R_i(t)$ lie in $k$ too.
\end{remark}

\section{Givental's Formula in the Formal Setting}
\label{sec:Giventalformulaformal}
Note that the discussion in \S\ref{sec:Giventalformulaanalytic} makes
sense, and the analog of Proposition~\ref{pro:R} holds, in the setting
where $\cM$ is a formal Frobenius manifold over an algebraically
closed field $k$ of characteristic zero.  In this case $\cM$ is the
formal neighbourhood of zero in a vector space $H$, so $\cM = \Spf
k\pss$ where $\phi_0,\ldots,\phi_N$ is a basis for $H$ and $s = s^0
\phi_0 + \ldots + s^N \phi_N$ is a point of $H$.  The family of
products on the tangent spaces to $\cM$ give (and are given by) a
$k\pss$-bilinear product $\ast$ on $H\pss$.  We choose $\phi_0$ to be
the identity of the product $\ast$.  A formal Frobenius manifold is
said to be \emph{semisimple at the origin} if the algebra
$(H,{\ast|_{s=0}})$ is semisimple.  (The origin is in any case the
only $k$-valued point of $\cM$.)  Then, since $k$ is algebraically
closed, distinct eigenvalues $u^0,\ldots,u^N$ for $(E \ast)$ exist in
$k\pss$; these form canonical co-ordinates on a formal neighbourhood
of $s=0$ in $\cM$.  The vectors $\parfrac{}{u^i}$ are idempotents in
the algebra $\Big(H\pss,\ast\Big)$, and we define $\Delta^i \in k\pss$
by:
\[
\Delta^i = {1 \over
  g\Big(\parfrac{}{u^i},\parfrac{}{u^i}\Big)}
\]
For Proposition~\ref{pro:R}, we replace:
\begin{align*}
  & \Psi_u \in \Hom\big(\C^{N+1},T_u \cM\big) 
  && \text{by} && \Psi \in \Hom\big(k^{N+1},H\big)\pss \\
  & R_u \in \End\big(\C^{N+1}\big) \otimes \C[\![z]\!] && \text{by} &&
  R \in \End\big(k^{N+1}\big)\big[\!\big[z\big]\!\big]\pss
\end{align*}
with the rest of the conditions unchanged.  In other words: the
canonical co-ordinates $u^i$, the normalizations $\Delta^i$, and the
transformations $\Psi$ and $R$ are all defined in a formal
neighbourhood of $s=0$ in $\cM$.

\begin{proposition}[formal version of Proposition~\ref{pro:R}]
  \label{pro:formalR}
  The equation:
  \[
  \Bigg( z \parfrac{}{z} + \frac{1}{z} \big({E \ast}\big) + \mu \Bigg)
  S = 0
  \]
  has a unique solution of the form $S = \Psi R \exp(U/z)$ such that:
  \begin{enumerate}
  \item $\Psi \in \Hom\big(k^{N+1},H\big)\pss$ sends the $i$th
    standard basis vector in $k^{N+1}$ to the $i$th normalized
    idempotent $\sqrt{\Delta^i} \parfrac{}{u^i} \in H\pss$
  \item $R \in \End\big(k^{N+1}\big)\big[\!\big[z\big]\!\big]\pss$ with $R
    \equiv \Id \mod z$
  \item $U = \operatorname{diag}\big(u^0,\ldots,u^N\big)$
  \end{enumerate}
  The transformation $R$ satisfies\footnote{As in the analytic case,
    the transformation $R$ here is in addition automatically flat with
    respect to the Dubrovin connection and homogeneous with respect to
    the Euler vector field.}:
  \[
  R(-z)^{\mathrm{T}} R(z) = \Id
  \]
\end{proposition}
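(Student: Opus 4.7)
The plan is to directly adapt Dubrovin's proof of Proposition~\ref{pro:R} to the formal setting, carrying out all constructions purely algebraically in $k\pss$.

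First I would establish canonical coordinates and the isomorphism $\Psi$ in the formal neighborhood of $s=0$. Since $k$ is algebraically closed and $(H,\ast|_{s=0})$ is semisimple, multiplication by $E$ at the origin has $N+1$ distinct eigenvalues in $k$. By the formal implicit function theorem applied to the characteristic polynomial of $E\ast$, these lift uniquely to distinct $u^0,\ldots,u^N\in k\pss$. A standard argument, identical to the one in the analytic setting, shows that the $u^i$ serve as formal coordinates on the formal neighborhood of $s=0$, that the $\partial/\partial u^i$ are idempotents in $(H\pss,\ast)$, and that $\Delta^i\in(k\pss)^\times$. After choosing square roots $\sqrt{\Delta^i}\in(k\pss)^\times$, the map $\Psi$ is defined by the stated condition and is an isometry from the standard inner product on $k^{N+1}$ to the Frobenius metric $g$ on $H\pss$.

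Next I would construct $R$ recursively. Substituting $S = \Psi R \exp(U/z)$ into the equation and using $(E\ast)\Psi = \Psi U$, left multiplication by $\Psi^{-1}$ reduces the problem to
\[
z\partial_z R + AR + z^{-1}[U,R] = 0, \qquad A := \Psi^{-1}\mu\Psi.
\]
Because $\mu$ is antisymmetric with respect to $g$ and $\Psi$ is an isometry, $A$ is antisymmetric, so in particular $A_{ii}=0$. Expanding $R = \sum_{k\ge 0} R_k z^k$ with $R_0 = \Id$, the coefficient of $z^{k-1}$ yields $[U,R_k] = -((k-1)+A)R_{k-1}$. The off-diagonal entries of $R_k$ are uniquely determined by dividing by $u^i-u^j$, which is a unit in $k\pss$ for $i \ne j$ since its constant term is nonzero by distinctness. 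The diagonal entries of $R_{k-1}$ are in turn pinned down by the compatibility condition that the diagonal of $((k-1)+A)R_{k-1}$ must vanish: this is automatic when $k=1$ because $A_{ii}=0$, and for $k \ge 2$ it yields $(R_{k-1})_{ii} = -\frac{1}{k-1}\sum_{j \ne i} A_{ij}(R_{k-1})_{ji}$, which determines the diagonal in terms of previously computed off-diagonal data. Thus $R$ exists and is unique.

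For the orthogonality relation $R(-z)^{\mathrm{T}} R(z) = \Id$, set $B(z) := R(-z)^{\mathrm{T}} R(z)$. Using $A^{\mathrm{T}} = -A$ together with the ODEs for $R(z)$ and $R(-z)$, a direct computation shows $z\partial_z B + z^{-1}[U,B] = 0$. Expanding $B = \sum_{k\ge 0} B_k z^k$ with $B_0 = \Id$, the recursion $[U,B_k] = -(k-1) B_{k-1}$ together with the diagonal consistency condition forces $B_k = 0$ for all $k \ge 1$ by induction. The main difficulty is really just careful bookkeeping: one must verify that no step of the analytic proof relies on genuinely analytic tools. The key ingredients—existence of canonical coordinates in $k\pss$ via Hensel's lemma, invertibility of $u^i-u^j$, the antisymmetry relation $A_{ii}=0$ that ensures consistency at each order of the recursion, and the simple induction proving orthogonality—are purely algebraic, so the argument transfers essentially verbatim from the analytic setting of Proposition~\ref{pro:R}.
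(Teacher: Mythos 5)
Your proof is correct and follows the route the paper implicitly prescribes: Proposition~\ref{pro:formalR} is stated as the formal analogue of the analytic Proposition~\ref{pro:R}, whose proof (due to Dubrovin and Teleman) the paper summarizes in Remark~\ref{rem:Rfield} as an order-by-order recursion in $z$, and your proposal carries out exactly that recursion while verifying that each step --- diagonalization of $E*$, antisymmetry of $A=\Psi^{-1}\mu\Psi$, solving $[U,R_k]=-((k-1)+A)R_{k-1}$, and the induction for $R(-z)^{\mathrm{T}}R(z)=\Id$ --- is purely algebraic over $k\pss$. One caveat worth flagging, though the paper is equally terse on the point, is that the recursion divides by $u^i-u^j$, so it requires the canonical coordinates to be pairwise distinct \emph{at} $s=0$ (making $u^i-u^j$ a unit in $k\pss$), not merely distinct as power series; semisimplicity of the algebra at the origin does not by itself imply this, and both you and the paper take it for granted, though it does hold in the intended Gromov--Witten application because the origin there plays the role of the generic point.
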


The composition $\Psi R \colon k^{N+1}\pss[\![z]\!] 
\to H\pss [\![z]\!]$ is a unitary isomorphism 
(see Definition \ref{def:unitarity}) 
over the ground ring $k\pss$, thus the 
following definition makes sense. 

\begin{definition}[formal version of Definition~\ref{def:ancestor}]
  \label{def:ancestorformal}
  The \emph{abstract ancestor potential} $\cAabs_s$ is:
  \[
  \cAabs_s = 
  \Big(\textstyle\prod_{i=0}^{i=N} \Delta^i\Big)^{-\frac{1}{48}}
  \, \widehat{\Psi} \widehat{R} (\cT)
  \]
\end{definition}

Just as in Proposition~\ref{pro:Aabs}, $\cAabs_s$ is a well-defined
rational element of $\Fock(H\pss, \phi_0)$ with 
discriminant $P(q_1) = \det(-q_1\ast) \in 
k\pss[q_1^0,\dots,q_1^N]$.

\section{Teleman Implies Givental}
\label{sec:TelemanimpliesGivental}
Let $X$ be a smooth projective toric variety.  Recall the definition
of the total ancestor potential $\cA_X$ in
equation~\ref{eq:ancestorpotential}.  The genus-zero Gromov--Witten
potential $F^0_X$ converges \cite{Iritani:convergence} in the sense of
\S\ref{sec:convergence}, and so the quantum cohomology of $X$ defines
an analytic Frobenius manifold (see
\S\ref{sec:Giventalformulaanalytic}).  This Frobenius manifold is
semisimple \cite{Iritani:convergence}.  When $X$ is a Fano toric
variety, Givental proves that:
\[
\cA_X|_{Q_1=\cdots=Q_r=1} = \cAabs
\]
by establishing a similar formula in the \emph{equivariant}
Gromov--Witten theory of $X$ and then taking a non-equivariant limit.
His argument simultaneously proves:
\begin{itemize}
\item[(A)] The convergence of $\cA_X|_{Q_1 = \cdots = Q_r=1}$, in the
  sense of Definition~\ref{def:ancestor-converge};
\item[(B)] The equality $\cA_X|_{Q_1 = \cdots = Q_r = 1} = \cAabs$,
  where the right-hand side is defined as in
  \S\ref{sec:Giventalformulaanalytic}.
\end{itemize}
Givental conjectured that (A) and (B) hold in general.  His
calculation in equivariant Gromov--Witten theory in fact applies to
any smooth projective toric variety $X$, and Iritani has proven that
one can take the non-equivariant limit of this calculation even if $X$
is not Fano \cite{Iritani:convergence}, so (A) and (B) are known to
hold whenever $X$ is a smooth projective toric variety.

In this section we explain how Givental's statements (A) and (B) can
be deduced in much greater generality from Teleman's classification of
Deligne--Mumford Field Theories (DMTs) \cite{Teleman}.  Teleman proves
\cite{Teleman}*{Theorem~1} that if a DMT satisfies:
\begin{itemize} 
\item a \emph{Cohomological Field Theory} condition;
\item a \emph{homogeneity} condition (involving an Euler vector field);
\item a \emph{flat vacuum} condition (involving the identity element of the
  Frobenius algebra);
\end{itemize} 
and if its genus-zero part defines a semisimple Frobenius algebra,
then:
\begin{itemize}
\item the DMT can be uniquely reconstructed from its genus-zero part;
  and
\item the ancestor potential of the DMT coincides with Givental's
  abstract potential $\cAabs$.  
\end{itemize}
Teleman's argument works over an arbitrary field of characteristic zero.

We now consider three conditions on the Gromov--Witten invariants of a
projective variety $X$.  Let $k$ denote the algebraic closure of the
fraction field of $\Lambda[\![t]\!]$.  The first condition, which we
call Formal Semisimplicity, is:
\begin{equation}
  \label{eq:formalsemisimplicity}
  \text{the quantum cohomology algebra $\big(H_X \otimes k, {\ast} \big)$ is semisimple}
\end{equation}
The second condition, which we call Genus-Zero Convergence, is:
\begin{equation}
  \label{eq:genuszeroconvergence}
  \text{the genus-zero Gromov--Witten potential $F^0_X$ converges
    in the sense of \S\ref{sec:convergence}}
\end{equation}
Let $\cM\subset H_X\otimes \C$ be a neighbourhood \eqref{eq:LRLnbhd}
of the large-radius limit point.  If Genus-Zero Convergence holds
then, as discussed in \S\ref{sec:Giventalformulaanalytic}, the
genus-zero Gromov--Witten theory of $X$ defines on $\cM$ the structure
of an analytic Frobenius manifold over $\C$.  The third condition,
which we call Analytic Semisimplicity, is:
\begin{equation}
  \label{eq:analyticsemisimplicity}
  \text{this analytic Frobenius manifold is generically semisimple}
\end{equation}

\begin{remark}
  Formal Semisimplicity \eqref{eq:formalsemisimplicity} and Genus-Zero
  Convergence \eqref{eq:genuszeroconvergence} together imply Analytic
  Semisimplicity \eqref{eq:analyticsemisimplicity}, and Genus-Zero
  Convergence \eqref{eq:genuszeroconvergence} and Analytic
  Semisimplicity \eqref{eq:analyticsemisimplicity} together imply
  Formal Semisimplicity \eqref{eq:formalsemisimplicity}.
\end{remark}

\begin{remark}
  All three conditions are satisfied when $X$ is a smooth projective
  toric variety: this follows from mirror symmetry for toric varieties
  \citelist{\cite{Givental:toric} \cite{Hori--Vafa}
    \cite{Iritani:convergence}}.
\end{remark}

\begin{remark}
  If both Genus-Zero Convergence \eqref{eq:genuszeroconvergence} and
  Analytic Semisimplicity \eqref{eq:analyticsemisimplicity} hold then
  we can define the abstract ancestor potential $\cAabsan$ as in
  \S\ref{sec:Giventalformulaanalytic}.  The subscript `$\mathrm{an}$'
  here is to emphasize that we are working in the analytic setting.
\end{remark}

In \S\ref{sec:telemanformal} below we show that if Formal
Semisimplicity holds then we can apply Teleman's theorem to the
Gromov--Witten theory of $X$, thereby recovering the total ancestor
potential $\cA_X$ from the quantum cohomology.  In
\S\ref{sec:ancestorconvergence} we show that if both Genus-Zero
Convergence and Analytic Semisimplicity hold then the total ancestor
potential $\cA_X$ is convergent in the sense of Definition
\ref{def:ancestor-converge}, and is equal to the abstract ancestor
potential $\cAabsan$.

\subsection{Applying Teleman's Theorem in the Formal Setting}
\label{sec:telemanformal}

Recall that $k$ denotes the algebraic closure of the fraction field of
$\Lambda[\![t]\!]$.  The quantum cohomology $(H_X\otimes k, *)$ over
$k$ is equipped with the element:
\begin{equation} 
\label{eq:Eulerelement}
E = t^0 \phi_0 + c_1(X) + \sum_{i=r+1}^N 
\left(1-\frac{1}{2} \deg \phi_i\right) t^i \phi_i  
\end{equation} 
corresponding to the Euler vector field \eqref{eq:Eulerfield}.  If
Formal Semisimplicity \eqref{eq:formalsemisimplicity} holds, then we
have the decomposition:
\begin{align*}
  H_X \otimes k = \bigoplus_{i=1}^N k \delta_i,
  &&
  \delta_i * \delta_j =
  \begin{cases}
    \delta_i & \text{if $i=j$} \\
    0 & \text{otherwise}
  \end{cases}
\end{align*}
and $(E*)$ is a semisimple operator with eigenvalues $u^0,\dots,u^N\in
k$ such that $E* \delta_i = u_i \delta_i$.  We define $\Delta^i\in k$
by
\[
\Delta^i = \frac{1}{g(\delta_i,\delta_i)}
\]
where $g$ stands for the Poincar\'{e} pairing.  Then, as in
Proposition \ref{pro:formalR}, the differential equation:
\[
\left(z\parfrac{}{z} + \frac{1}{z} E* + \mu \right) S = 0 
\]
has a unique solution of the form $S= \Psi R e^{U/z}$ such that:
\begin{enumerate} 
\item $\Psi \in \Hom(k^{N+1},H_X\otimes k)$ sends the 
$i$th standard basis vector in $k^{N+1}$ to the $i$th 
normalized idempotent $\sqrt{\Delta^i} \delta_i$.  
\item $R\in \End(k^{N+1},k^{N+1})[\![z]\!]$ with $R\equiv \Id \mod z$. 
\item $U = \diag(u^0,\dots,u^N)$. 
\end{enumerate} 
Hence we can define the abstract ancestor potential as:
\[
\cAabsformal = e^{{-\frac{1}{48}} \sum_i \log \Delta^i}\,
\widehat{\Psi} \widehat{R} (\cT).
\]
(cf.~Definitions~\ref{def:ancestor} and~\ref{def:ancestorformal}).
$\cAabsformal$ is a rational element of $\Fock(H_X\otimes k, \phi_0)$
with discriminant $\det(-q_1*)$.  
We will see below that it arises from a formal Frobenius 
manifold over $k$ as the ancestor potential at the origin.  

\begin{theorem}[Teleman \cite{Teleman}]
\label{thm:Ageom=Aabs}
Let $X$ be a smooth projective variety such that Formal Semisimplicity
\eqref{eq:formalsemisimplicity} holds.  Recall the definition of the
total ancestor potential $\cA_X$ in
equation~\ref{eq:ancestorpotential}, and the definition of the ring
$\cS$ in Remark~\ref{rem:ancestordivisor}.  We have:
\[
\cA_X = \cAabsformal.
\] 
In particular $\cA_X$ is a rational element of $\Fock(H_X\otimes
\cS,\phi_0)$, with discriminant
$\det(-q_1*)$.
\end{theorem}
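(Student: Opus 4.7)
The plan is to apply Teleman's classification theorem \cite{Teleman}*{Theorem~1} to the Cohomological Field Theory arising from Gromov--Witten theory of $X$. Gromov--Witten theory defines a family of CohFTs parametrized by the formal neighborhood of the origin in $H_X$ (with Novikov parameters adjoined), and this family automatically satisfies Teleman's \emph{homogeneity} condition (via the Euler field \eqref{eq:Eulerelement}) and \emph{flat vacuum} condition (via the identity $\phi_0$). The hypothesis of Formal Semisimplicity \eqref{eq:formalsemisimplicity} says precisely that the genus-zero Frobenius algebra is semisimple over the algebraic closure $k$ of $\FractionField(\Lambda[\![t]\!])$, which is the generic point of the parameter space. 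As Teleman points out in \cite{Teleman}*{Example~1.6}, generic semisimplicity in this sense is sufficient to run his reconstruction argument.

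First I would spell out that, under Formal Semisimplicity, all data needed to form $\cAabsformal$ (canonical coordinates $u^i$, normalizations $\Delta^i$, the transformations $\Psi$ and $R$) live in $k$ (or in $k[\![z]\!]$), by Proposition~\ref{pro:formalR}. Next, Teleman's theorem, applied to the Gromov--Witten CohFT viewed as a CohFT over $k$, states that the ancestor partition function of the theory is reconstructed from its genus-zero restriction by exactly the Givental quantization formula $\widehat{\Psi}\widehat{R}(\cT)$ (together with the genus-one log-term fixed by the flat-vacuum normalization $\cF^1_{\text{\rm abs}}|_{\by(z)=0}=0$). On the geometric side, the ancestor partition function of the Gromov--Witten CohFT is $\cA_X$ (this is the content of the definition \eqref{eq:ancestor}--\eqref{eq:ancestorpotential}). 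Comparing the two sides yields $\cA_X = \cAabsformal$ as elements of $\Fock(H_X\otimes k,\phi_0)$.

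Once this identification is established, the rationality assertions follow directly from Propositions~\ref{pro:Feynman} and~\ref{pro:Aabs} in the formal setting: $\widehat{\Psi}\widehat{R}(\cT)$ is rational with weight ${-\frac{1}{24}}$ and discriminant $\prod_{i}(-[\Psi^{-1}q_1]^i)$, and multiplying by the prefactor $\bigl(\prod_i \Delta^i\bigr)^{-1/48}$ converts the discriminant into $\det(-q_1*) \in k[q_1^0,\ldots,q_1^N]$, with weight unchanged. To upgrade the ground ring from $k$ down to $\cS$ (Remark~\ref{rem:ancestordivisor}), I observe that the geometric side $\cA_X$ a priori has coefficients in $\cS$ by the Divisor Equation, and the polynomial $\det(-q_1*)$ lies in $\cS[q_1^0,\ldots,q_1^N]$ since the structure constants of $*$ do; hence rationality over $k$ combined with the $\cS$-integrality of $\cA_X$ forces the polynomials $f_{I,\alpha}(q_1)$ appearing in the rationality condition to have $\cS$-coefficients as well.

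The main obstacle is of course Teleman's theorem itself, which is the heavy technical input and is invoked rather than reproved. Within our deduction, the subtlest point is verifying that the hypothesis ``generic semisimplicity'' in Teleman is indeed equivalent to our Formal Semisimplicity condition, i.e.\ that semisimplicity over the algebraic closure of the fraction field of $\Lambda[\![t]\!]$ is exactly what Teleman needs to define $\Psi$, $R$, and $U$ generically and to carry out his reconstruction over the formal parameter space. The subsequent descent of the rationality statement from $k$ to $\cS$ is then a straightforward consistency check using the Divisor Equation, and the normalization constants that enter (the prefactor and the fixed constant term of $\cF^1$) match by the argument in the proof of Proposition~\ref{pro:Aabs}.
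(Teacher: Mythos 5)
Your proposal follows the same overall strategy as the paper—apply Teleman's classification to the Gromov--Witten CohFT after base-changing to the field $k = \ov{\FractionField\Lambda[\![t]\!]}$ so that the Frobenius algebra becomes semisimple—and your discussion of the rationality and descent to $\cS$ matches what the paper does (in fact spells out slightly more than the paper's terse ``this implies''). However, you gloss over the step that makes the application of Teleman's theorem actually legal, which is the paper's Step 2. Teleman's hypothesis involves a \emph{family} of DMTs and an Euler \emph{vector field} on the parameter space of that family (\cite{Teleman}*{Definition~7.16}). Once you absorb the parameters $t^0,\ldots,t^N$ and the Novikov variables into the ground field $k$, there is no parameter space left: you are holding a single Frobenius algebra over $k$, not a formal Frobenius manifold. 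Your reference to ``the Euler field \eqref{eq:Eulerelement}'' conflates the Euler \emph{element} $E = t^0\phi_0 + c_1(X) + \cdots \in H_X\otimes k$ with the Euler vector field required by the homogeneity condition; these are related but not the same object.

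The paper resolves this by re-deforming the $t$-shifted DMT by a fresh set of parameters $s^0,\ldots,s^N$, producing a family of DMTs over $\Spf k\pss$; it then checks that this family is homogeneous with respect to the vector field $\cE = \rho^1\parfrac{}{s^1} + \cdots + \sum_i (1-\tfrac{1}{2}\deg\phi_i)(s^i+t^i)\parfrac{}{s^i}$, whose translation by $t$ you should note differs from the naïve \eqref{eq:Eulerfield}. Only now is the origin $s=0$ a semisimple point of a formal Frobenius manifold over $k$, and only now can one invoke Teleman. Setting $s=0$ at the end recovers $\cA_X$. This re-deformation is not merely bookkeeping: without it the homogeneity hypothesis has no content, and the identification of ``Gromov--Witten ancestor potential'' with ``ancestor potential of a family of DMTs at a point'' is not even well posed. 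You should incorporate this construction explicitly before citing Teleman's theorem.
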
 

\begin{proof}
  This is a direct consequence of Teleman's result.  We spell out how
  the Gromov--Witten theory of $X$ defines both a Deligne--Mumford
  Field Theory (DMT) over $k$ and a formal Frobenius manifold over
  $k$.  This formal Frobenius manifold induces at the origin the data
  defined above: the Frobenius algebra $(H_X\otimes k, *, g)$ together
  with $E$ and $\mu$.

  \subsection*{Step 1: A DMT over $k$.} We first make minor
  adjustments to the formal setup in Teleman \cite{Teleman}.  Recall
  that a DMT is a family of maps:
  \begin{align*}
    Z_g^n \colon & H_X^{\otimes n} \longrightarrow H^\bullet(\cMbar_{g,n})
    && 2g-2+n>0
  \end{align*}
  satisfying certain \emph{factorization axioms} and a \emph{vacuum
    axiom}.  Pulling back cohomology classes along the maps
  $\ev_i:X_{g,n,d} \to X$, capping with the virtual fundamental class,
  and then pushing forward along the canonical map $X_{g,n,d} \to
  \cMbar_{g,n}$ defines maps:
  \begin{align*}
    GW_{g,d}^n \colon & H_X^{\otimes n} \longrightarrow
    H^\bullet(\cMbar_{g,n}) && 2g-2+n>0
  \end{align*}
  and setting:
  \[
  Z_g^n = \sum_{d \in \NE(X)} GW_{g,d}^n \, Q^d
  \]
  defines a DMT over $\Lambda$.  Let $t \in H_X$ be $t = t^0 \phi_0 +
  \cdots + t^N \phi_N$ as before.  Setting:
  \begin{align*}
    \leftsub{t}{Z}_g^n(x_1,\ldots,x_n) = \sum_{m \geq 0} {1 \over m!}
    \int_{\cMbar_{g,n+m}}^{\cMbar_{g,n}} Z_g^{n+m} (x_1,\ldots,x_n,t,\ldots,t)
    && 2g-2+n>0
  \end{align*}
  where the integral denotes the push-forward along the canonical map
  $\cMbar_{g,n+m} \to \cMbar_{g,n}$, defines a formal family of DMTs
  over $\Lambda$, parametrized by $\Spf \Lambda[\![t]\!]$; cf
  \cite{Teleman}*{\S7}.  We regard this as a single DMT over the field
  $k$.
  
  \subsection*{Step 2: A formal Frobenius manifold over $k$.}  We now
  deform this DMT to construct a family of DMTs parametrized by $\Spf
  k \big[\!\big[s^0,\ldots,s^N \big]\!\big]$, and hence a formal
  Frobenius manifold over $k$.  (The genus-zero part of any DMT is a
  tree-level Cohomological Field Theory in the sense of
  \cite{Manin}*{III.4}, and thus determines a formal Frobenius
  manifold.)\phantom{.}  Define:
  \begin{align*}
    \leftsub{s,t}{Z}_g^n(x_1,\ldots,x_n) = \sum_{m \geq 0} {1 \over m!}
    \int_{\cMbar_{g,n+m}}^{\cMbar_{g,n}} \leftsub{t}{Z}_g^{n+m} (x_1,\ldots,x_n,s,\ldots,s)
    && 2g-2+n>0
  \end{align*}
  where $s \in H_X$ is $s = s^0 \phi_0 + \cdots + s^N \phi_N$.  As in
  \cite{Teleman}*{\S7}, this defines a family of DMTs over $k$,
  parametrized by $\Spf k \big[\!\big[s^0,\ldots,s^N \big]\!\big]$.
  It is easy to check that this family is homogeneous\footnote{See
    \cite{Teleman}*{Definition~7.16}.} of weight $D = \dim_\C X$ with
  respect to the Euler vector field $\cE$ on $\Spf k\pss$:
  \[
  \cE = \rho^1 \parfrac{}{s^1} + \cdots + \rho^r \parfrac{}{s^r} +
  \sum_{i=0}^{i=N} \Big( 1 - \textstyle\frac{\deg \phi_i}{2}  \Big) 
  (s^i +t^i) \displaystyle\parfrac{}{s^i} 
  \]
  where $c_1(X) = \rho^1 \phi_1 + \cdots + \rho^r \phi_r$; note the
  shift compared to the Euler field in equation \eqref{eq:Eulerfield}.
  The formal Frobenius manifold over $k$ defined by the DMT is
  therefore conformal with Euler vector field $\cE$.
 The Euler vector field $\cE$ induces the element
  \eqref{eq:Eulerelement} at the origin and defines the grading
  operator $\mu$ by:
  \[
  \mu = \big(1 - \textstyle\frac{D}{2}\big)\Id - \nabla^{\LC} \cE. 
  \]
  Formal Semisimplicity \eqref{eq:formalsemisimplicity} guarantees
  that this formal Frobenius manifold induces a semisimple Frobenius
  algebra $(H_X\otimes k, *,g)$ at the origin.
  
  \subsection* {Step 3: Applying Teleman's Theorem.}
  Teleman's Theorem now implies that the ancestor potential for the
  family of DMTs constructed in Step 2 coincides with the abstract
  ancestor potential for the formal Frobenius manifold constructed in
  Step 2.  On setting $s = 0$, the ancestor potential for the family
  of DMTs becomes the geometrically-defined ancestor potential
  $\cA_X$ (see equation~\ref{eq:ancestorpotential}).  Thus:
  \[
  \cA_X = \cAabsformal.
  \]
  The right-hand side here is, \emph{a priori}, a formal power series
  in the variables $y^\beta_j$ with coefficients in $k$, but since it
  coincides with the left-hand side we know from
  Remark~\ref{rem:ancestordivisor} that it is in fact a formal power
  series in the variables $y^\beta_j$ with coefficients in $\cS$.
  Moreover, $\cAabsformal$ is rational over $k$ with 
discriminant $\det(-q_1*)$; this implies that
  $\cA_X$ is rational over $\cS$ with 
  discriminant $\det(-q_1*)$.
\end{proof}

\subsection{Convergence of the Total Ancestor Potential}
\label{sec:ancestorconvergence}

\begin{theorem} 
  \label{thm:convergence-ancestor} 
  Let $X$ be a smooth projective variety such that Genus-Zero
  Convergence \eqref{eq:genuszeroconvergence} and Analytic
  Semisimplicity \eqref{eq:analyticsemisimplicity} hold.  The total
  ancestor potential $\cA_X$ is convergent in the sense of Definition
  \ref{def:ancestor-converge}; more precisely $\cA_X$ is a rational
  element of $\Fock(H_X\otimes \cS_\epsilon,\phi_0)$, for some
  $\epsilon>0$, with discriminant $\det(-q_1*)$.
  Moreover we have:
  \[
  \cA_X|_{Q_1 = \cdots = Q_r = 1} = \cAabsan. 
  \]
\end{theorem}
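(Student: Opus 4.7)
The plan is to use Theorem~\ref{thm:Ageom=Aabs} to identify $\cA_X$ with the formal abstract potential $\cAabsformal$, and then to use the analytic construction of \S\ref{sec:Giventalformulaanalytic} on the semisimple locus to lift convergence from that locus to the full neighbourhood \eqref{eq:e-ball} via the rationality structure.

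First I would observe that Genus-Zero Convergence and Analytic Semisimplicity together imply Formal Semisimplicity, so Theorem~\ref{thm:Ageom=Aabs} yields $\cA_X = \cAabsformal$ as a rational element of $\Fock(H_X \otimes \cS, \phi_0)$ with weight $-\tfrac{1}{24}$ and discriminant $P(q_1) = \det(-q_1*)$. The coefficients of $P(q_1)$, viewed as a polynomial in the $q_1^\alpha$, are polynomial expressions in the structure constants of $*_t$, and hence by Genus-Zero Convergence they already lie in $\cS_\epsilon$ for some $\epsilon > 0$. The remaining task is to show that the numerator polynomials $f_{I,\alpha}(q_1)$ appearing in the rationality condition also have coefficients in $\cS_\epsilon$, and then to identify the resulting object with $\cAabsan$.

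Next I would pass to the analytic setting. On the open dense semisimple locus $U^{ss} \subset U_\epsilon$, Proposition~\ref{pro:R} produces analytic canonical coordinates $u^i(t)$, normalizations $\Delta^i(t)$, and transformations $\Psi_t$, $R_t$, and Definition~\ref{def:ancestor} gives the analytic abstract potential $\cAabsan$ on $U^{ss}$. Uniqueness of the solution to the defining ODE for $R$ ensures that this construction is formally compatible at every $t_0 \in U^{ss}$ with the formal one of Proposition~\ref{pro:formalR}: the Taylor expansion of $\cAabsan$ at $t_0$ coincides with the restriction of $\cAabsformal\big|_{Q_i=1}$ to the formal neighbourhood of $t_0$. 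Applying the Feynman-diagram formula from the proof of Proposition~\ref{pro:Feynman} to the analytic data, each coefficient of $f_{I,\alpha}(q_1)$ is thereby realized as an analytic function of $t \in U^{ss}$, a priori singular along the discriminant. To upgrade this to an element of $\cS_\epsilon$ I would use the identity
\[
f_{I,\alpha}(q_1,t) = P(q_1,t)^{K(I,\alpha)} \cdot \left(\frac{\partial^n \bar{\cF}^g}{\partial y^{\alpha_1}_{i_1}\cdots \partial y^{\alpha_n}_{i_n}}\right)\bigg|_{\by(z)=y_1z},
\]
and argue that although individual Feynman contributions involve $\Psi_t^{-1}$ and are singular along the discriminant, their sum times the appropriate power of $P(q_1) = \det(-q_1*_t)$ yields a polynomial in $q_1$ whose coefficients extend analytically across the discriminant by the Riemann extension theorem.

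With both the discriminant and numerator polynomials realized over $\cS_\epsilon$, $\cA_X$ is a rational element of $\Fock(H_X\otimes \cS_\epsilon,\phi_0)$ with the stated weight and discriminant. The equality $\cA_X|_{Q_i=1} = \cAabsan$ then follows because both sides are analytic on $U_\epsilon$ and agree formally at every point of $U^{ss}$. The principal technical obstacle is the pole-cancellation step: one must verify that multiplication by $P(q_1)^{K(I,\alpha)}$ absorbs all singularities of the Feynman sum along the discriminant. Teleman's theorem is essential here, since without the a priori rationality of $\cA_X$ over $\cS$ that it provides, there would be no polynomial denominator bound capable of absorbing those singularities.
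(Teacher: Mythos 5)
Your proposal correctly identifies the two main inputs --- Teleman's identification $\cA_X=\cAabsformal$ and the analytic construction of $\cAabsan$ on the semisimple locus --- but the step that lifts convergence from the semisimple locus to a full neighbourhood of the large-radius limit point does not go through as written. You claim that the formal rationality of $\cA_X$ over $\cS$ provides ``a polynomial denominator bound capable of absorbing the singularities'' along the non-semisimple locus $Z$, and then invoke the Riemann extension theorem. This conflates two different discriminants: $P(q_1)=\det(-q_1*_t)$ is a polynomial in the fibre variable $q_1$, and multiplying by $P(q_1)^{K}$ cancels poles in $q_1$-space, but it does nothing to control the blow-up of $\Psi_t^{-1}$, $R_t$, $\Delta^i(t)$ as $t$ approaches $Z\subset B_\epsilon$. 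The Riemann extension theorem requires local boundedness near $Z$, and neither the rationality over $\cS$ (which is a statement about \emph{formal} power series) nor the Feynman sum gives you that bound; boundedness near $Z$ is, in effect, what you are trying to prove.

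The missing ingredient is the algebraic argument of Lemma~\ref{lem:k3k4k5}. The paper's proof observes that $\Delta^i$, $\Psi_t$, $R_t$ solve algebraic and algebraically-recursive equations in the canonical co-ordinates (cf.\ Remark~\ref{rem:Rfield}), so the coefficients of $\cAabsan$ lie in $k_3 = \ov{\FractionField\, \Q\{t^0,e^{t^1},\dots,e^{t^r},t^{r+1},\dots,t^N\}}$, an algebraic closure of the fraction field of the ring of \emph{convergent} power series. Separately, Teleman's theorem places $\cA_X|_{Q_1=\cdots=Q_r=1}$ in the \emph{formal} ring $k_4=\Q[\![t^0,e^{t^1},\dots]\!]$. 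Because the two expressions are equal, every coefficient lies in $k_3\cap k_4$, and Lemma~\ref{lem:k3k4k5} --- proved via Weierstrass preparation and Aroca's parametrization theorem --- shows $k_3\cap k_4 = \Q\{t^0,e^{t^1},\dots\}$. This is the real ``algebraicity implies convergence'' step; it cannot be replaced by Riemann extension, since one needs to rule out essential singularities and, a priori, branching of the algebraically-defined ingredients along $Z$. After this, a separate argument (shrinking $\epsilon$ to $\epsilon'$ so that $B_{\epsilon'}$ meets only those components of $Z$ passing through the origin, and using holomorphy on $B_{\epsilon'}^{\rm ss}$) promotes the coefficients from $k_5$ to $\cS_{\epsilon'}$, giving convergence in the sense of Definition~\ref{def:ancestor-converge}.
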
 

\begin{proof} 
  Let $\FractionField$ denote the fraction field and overline denote
  the algebraic closure, so that:
  \[
  k= \ov{\FractionField \Lambda[\![t]\!]}
  \]
  Let:
  \begin{align*} 
    & k_1 = \ov{\FractionField \Q\big[\!\big[t^0,Q_1 e^{t^1}, \dots,Q_r
      e^{t^r}, t^{r+1}, \dots, t^N\big]\!\big] } \\
    & k_2 = \ov{\FractionField \Q\big[\!\big[t^0,e^{t^1},\dots,e^{t^r},
      t^{r+1},\dots, t^N\big]\!\big]} \\
    & k_3 = \ov{\FractionField \Q\big\{ 
      t^0,e^{t^1},\dots, e^{t^r},t^{r+1},\dots,t^N \big\} } \\
    & k_4 = \Q\big[\!\big[t^0,e^{t^1},\dots,e^{t^r},t^{r+1},\dots,t^N\big]\!\big] \\
    & k_5 =  \Q\big\{t^0,e^{t^1},\dots,e^{t^r},t^{r+1},\dots,t^N\big\} 
  \end{align*}
  Lemma~\ref{lem:k3k4k5} below shows that $k_3 \cap k_4 = k_5$.

  The Divisor Equation implies that all of the ingredients $\Delta^i$,
  $\Psi$, and $R$ used to define $\cAabsformal$ (in
  \S\ref{sec:telemanformal}) are defined over $k_1$, and therefore
  that $\cAabsformal$ is an element of $\Fock(H_X\otimes k_1,
  \phi_0)$.  The specialization $Q_1= \cdots =Q_r=1$ defines an
  isomorphism $k_1 \cong k_2$, and thus $\cAabsformal|_{Q_1= \cdots =
    Q_r=1}$ is a well-defined element of $\Fock(H_X\otimes
  k_2,\phi_0)$.

  On the other hand all of the ingredients $\Delta^i(t)$, $\Psi_t$,
  and $R_t$ used to define $\cAabsan$ (in
  \S\ref{sec:Giventalformulaanalytic}) are defined over $k_3$, and
  therefore $\cAabsan$ is an element of $\Fock(H_X\otimes
  k_3,\phi_0)$.  Note that $k_3$ is contained in $k_2$.  Because the
  two sets of ingredients $(\Delta^i(t), \Psi_t,R_t)$ and $(\Delta^i,
  \Psi,R)$ coincide under the maps between ground fields $k_3 \to k_2$
  and $k_1 \to k_2$, it follows that
  \begin{equation} 
    \label{eq:absan=absformal} 
    \cAabsan = \cAabsformal|_{Q_1=\cdots = Q_r=1} 
  \end{equation} 
  as elements of $\Fock(H_X\otimes k_2,\phi_0)$. 

  By Theorem \ref{thm:Ageom=Aabs}, the right-hand side of
  \eqref{eq:absan=absformal} equals $\cA_X|_{Q_1= \cdots =Q_r=1}$ and
  is an element of $\Fock(H_X\otimes k_4, \phi_0)$.  Note that $k_4$
  is contained in $k_2$.  Since the left-hand side of
  \eqref{eq:absan=absformal} is defined over $k_3\subset k_2$, it
  follows that $\cAabsan$, $\cAabsformal|_{Q_1= \cdots =Q_r =1}$, and
  $\cA_X|_{Q_1= \cdots =Q_r=1}$ (which are all equal) are all defined
  over $k_3 \cap k_4 = k_5$, i.e.~all three are elements of
  $\Fock(H_X\otimes k_5,\phi_0)$.  

  Because $\cM$ is a neighbourhood \eqref{eq:LRLnbhd} of the
  large-radius limit point, it contains the set
  \[
  \{(t^0,\dots,t^N)\; |\; (t^0,e^{t^1},\dots,e^{t^r},t^{r+1},\dots,t^N) 
  \in B_\epsilon \}
  \]
  for some $\epsilon>0$, where $B_\epsilon = \{(z_0,\dots,z_N) \in
  \C^{N+1}\;|\; |z_i|< \epsilon \}$.  By Remark
  \ref{rem:absan-analytic}, $\cAabsan$ is also an element of
  $\Fock(H_X\otimes \cO(B_\epsilon^{\rm ss}), \phi_0)$ where
  $B_\epsilon^{\rm ss}\subset B_\epsilon\cap (\C\times
  (\C^\times)^r\times \C^{N-r})$ denotes the semisimple locus.
  Therefore, when expanding $\log \cAabsan$ in variables $y_j^\beta$
  and $\hbar$, each coefficient is analytic function on
  $B_\epsilon^{\rm ss}$ which extends to a neighbourhood of the origin
  in $B_\epsilon$.  Observe that $B_\epsilon^{\rm ss}$ is an analytic
  Zariski open subset in $B_\epsilon$ and that $Z=B_\epsilon \setminus
  B_\epsilon^{\rm ss}$ is a locally finite union of irreducible
  analytic subvarieties.  Thus there exists $\epsilon'$ such that
  $0<\epsilon'<\epsilon$ and that $B_{\epsilon'}$ does not meet any
  irreducible component of $Z$ which is away from the origin.  Every
  coefficient (of the expansion of $\log \cAabsan$ in variables
  $y_j^\beta$ and $\hbar$) extends to a holomorphic function on
  $B_{\epsilon'}$.  This shows that $\cA_X|_{Q_1= \cdots =Q_r=1}$ is convergent in the
  sense of Definition \ref{def:ancestor-converge}, or in other words:
  \[
  \cA_X|_{Q_1= \cdots =Q_r=1}
  \in \Fock(H_X \otimes \cS_{\epsilon'}, \phi_0)
  \]
  Finally, the rationality of $\cA_X|_{Q_1= \cdots =Q_r=1}$ follows
  from the rationality of $\cAabsan$ and the fact that the
  discriminant $\det(-q_1*)$ is an element of
  $\cS_{\epsilon'}[q_1^0,\dots,q_1^N]$.
\end{proof} 

\begin{lemma} 
  \label{lem:k3k4k5}
 Consider the intersections:
  \begin{align*}
    & \ov{\FractionField \C\{x_1,\ldots,x_n\}} \cap \C[\![x_1,\ldots,x_n]\!] \subset
    \ov{\FractionField \C[\![x_1,\ldots,x_n]\!]} \\
    & \ov{\FractionField \Q\{x_1,\ldots,x_n\}} \cap \Q[\![x_1,\ldots,x_n]\!] \subset
    \ov{\FractionField \Q[\![x_1,\ldots,x_n]\!]} 
 \end{align*}
 We have:
 \begin{enumerate}
  \item $\ov{\FractionField \C\{x_1,\ldots,x_n\}} \cap \C[\![x_1,\ldots,x_n]\!]  =
   \C\{x_1,\ldots,x_n\}$\\[-1.9ex]
 \item $\ov{\FractionField \Q\{x_1,\ldots,x_n\}} \cap \Q[\![x_1,\ldots,x_n]\!]  =
   \Q\{x_1,\ldots,x_n\}$
 \end{enumerate}
\end{lemma}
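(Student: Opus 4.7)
My plan is to reduce both claims to the classical fact that the ring $\C\{x_1,\ldots,x_n\}$ of convergent power series is algebraically closed in the formal ring $\C[\![x_1,\ldots,x_n]\!]$. Part (2) then follows from (1) at once: if $f\in\Q[\![x]\!]$ is algebraic over $\Q\{x\}$, then $f$ is algebraic over $\C\{x\}$ and so, by (1), lies in $\C\{x\}$; since the coefficients of $f$ are rational by hypothesis, $f\in\Q\{x\}$. The essential content is therefore part (1).

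For (1), let $f\in\C[\![x]\!]$ be algebraic over $\C\{x\}$ and let $P(T)\in\C\{x\}[T]$ be a monic irreducible minimal polynomial for $f$. Because we are in characteristic zero, separability of $P$ furnishes a B\'ezout identity
\[
A(T,x)\,P(T,x) + B(T,x)\,\tfrac{\partial P}{\partial T}(T,x) = D(x)
\]
with $A,B\in\C\{x\}[T]$ and $D\in\C\{x\}\setminus\{0\}$. Substituting $T=f$ and using $P(f)=0$ shows that $\tfrac{\partial P}{\partial T}(f)$ is a nonzero element of $\C[\![x]\!]$ whose order in the maximal ideal $(x_1,\ldots,x_n)$ is bounded by $k:=\operatorname{ord}(D)$. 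In particular $f$ is a \emph{simple} root of $P$ in $\C[\![x]\!]$.

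I then invoke Artin's approximation theorem for analytic equations: given the formal solution $f$ of $P(T,x)=0$, for every integer $N$ there exists a convergent solution $\tilde f\in\C\{x\}$ with $\tilde f\equiv f\bmod(x_1,\ldots,x_n)^N$. Fix any $N>k$. Taylor-expanding $P(\tilde f)-P(f)$ in the variable $T$ gives
\[
0=(\tilde f-f)\Bigl(\tfrac{\partial P}{\partial T}(f)+(\tilde f-f)\,R\Bigr)
\]
for some $R\in\C[\![x]\!]$. The parenthesized factor agrees with $\tfrac{\partial P}{\partial T}(f)$ modulo $(x_1,\ldots,x_n)^N$ and is therefore nonzero, because $\operatorname{ord}\tfrac{\partial P}{\partial T}(f)\le k<N$. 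Since $\C[\![x]\!]$ is an integral domain, $\tilde f=f$, so $f\in\C\{x\}$.

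The main obstacle, and the essential analytic input, is Artin's approximation theorem, which I would cite rather than reprove; everything else is a routine use of separability and the structure of $\C[\![x]\!]$. A self-contained alternative would be an induction on $n$ using Weierstrass preparation to reduce to the case $n=1$, where algebraic elements of $\C[\![x_1]\!]$ converge by Puiseux's theorem; this is longer but avoids invoking Artin approximation.
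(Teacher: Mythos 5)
Your argument is correct, but it takes a genuinely different route from the paper's. You invoke Artin's approximation theorem (plus a Taylor-expansion/separability argument to upgrade an approximate solution to an exact one), whereas the paper gives a more hands-on proof: it uses Weierstrass preparation to reduce to a Weierstrass polynomial, then applies Aroca's multivariate Puiseux theorem to extract one convergent root after a monomial change of variables, peels it off, and inducts on the degree; a linear coordinate change plus the implicit function theorem handles the degenerate case where $P(0,\ldots,0,y)\equiv 0$. Your approach is shorter and arguably more conceptual — indeed, the algebraic closedness of $\C\{x\}$ in $\C[\![x]\!]$ is one of the standard corollaries of Artin approximation — at the cost of invoking a substantially heavier black box; the paper's proof is longer but depends only on Weierstrass preparation and Aroca's Puiseux-type parametrization theorem, which you yourself flag as the natural self-contained alternative. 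One small cosmetic remark: you call $P$ a ``monic'' minimal polynomial over $\C\{x\}$, which presupposes that $f$ is integral over $\C\{x\}$ rather than merely algebraic over its fraction field; this is easily arranged (replace $f$ by $a_0 f$ where $a_0$ is the leading coefficient and use faithful flatness of $\C\{x\}\hookrightarrow\C[\![x]\!]$ to recover $f\in\C\{x\}$), and in fact your argument never actually uses monicity — irreducibility and separability alone give the B\'ezout identity — so this is harmless. The deduction of (2) from (1) matches the paper's.
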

\begin{proof}
  Statement~(1) immediately implies statement~(2).  We prove (1).  Let:
  \[
  P(x_1,\ldots,x_n,y) = f_0(x_1,\ldots,x_n) y^k + f_1(x_1,\ldots,x_n) y^{k-1} +
  \cdots + f_k(x_1,\ldots,x_n)
  \]
  where $f_i \in \C\{x_1,\ldots,x_n\}$. Assume that the equation
  $P(x_1,\ldots,x_n,y) = 0$ has a solution $y = g(x_1,\ldots,x_n)$
  with $g \in \C[\![x_1,\ldots,x_n]\!]$, so that:
  \[
  P\big(x_1,\ldots,x_n,g(x_1,\ldots,x_n)\big) = 0
  \] 
  We will show that $g \in \C\{x_1,\ldots,x_n\}$.  Without loss of
  generality we may assume that $g(0,\ldots,0) = 0$, and therefore
  that $P(0,0,\ldots,0,0) = 0$.

  Suppose first that $P(0,0,\ldots,0,y)$ is not identically zero.
  Then the Weierstrass preparation theorem implies that:
  \[
  P(x_1,\ldots,x_n,y) =   W(x_1,\ldots,x_n,y)\, h(x_1,\ldots,x_n,y) 
  \]
  where $h$ is a unit in the local ring at the origin and $W$ is a
  Weierstrass polynomial:
  \[
  W(x_1,\ldots,x_n,y) = y^l + \sum_{j=0}^{l-1} w_j(x_1,\ldots,x_n)
  y^{j}
  \]
  with $w_j(0,\ldots,0) = 0$.  Then
  $W(x_1,\ldots,x_n,g(x_1,\ldots,x_n)) = 0$.  A theorem of Aroca 
  \cite{Aroca} 
  implies that there exist vectors:
  \begin{align*}
    & v_1,\ldots,v_n \in \Q^n && v_i = (v_i^1,\ldots,v_i^n)
  \end{align*}
  such that $v_1,\ldots,v_n$ span a strictly convex cone containing
  the positive orthant, that the $\Z_{\geq 0}$-span of
  $v_1,\ldots,v_n$ contains $\big(\Z_{\geq 0}\big)^n$, and that after
    the monomial change of variables:
  \begin{align*}
    & z_i = x_1^{v_i^1}\cdots x_n^{v_i^n} && i = 1, 2, \ldots, n
  \end{align*}
  there exists a convergent power series $y_c \in
  \C\{z_1,\ldots,z_n\}$ such that:
  \[
  W\big(x_1,\ldots,x_n,y_c(z_1,\ldots,z_n)\big) = 0
  \]
  One can therefore factorize $W$ over the ring $\C\{z_1,\ldots,z_n\}$:
  \[
  W(x_1,\ldots,x_n,y) = (y-y_c)\left(y^{l-1} + \sum_{j=0}^{l-2}
    w_j'(z_1,\ldots,z_n) y^j \right)
  \]
  This equation makes sense over the ring $\C[\![z_1,\ldots,z_n]\!]$
  which contains the solution $y = g(x_1,\ldots,x_n)$.  Thus either
  $y_c = g$, in which case $g\in \C\{x_1,\ldots,x_n\}$, or we can apply Aroca's
  theorem again with $W(x_1,\ldots,x_n,y)$ replaced by the Weierstrass
  polynomial:
  \[
  y^{l-1} + \sum_{j=0}^{l-2} w_j'(z_1,\ldots,z_n) y^j
  \]
  of lower degree.  By induction, we conclude that $g \in \C\{x_1,\ldots,x_n\}$.

  It remains to consider the case where $P(0,0,\ldots,0,y)$ is
  identically zero.  Consider the co-ordinate change:
  \begin{align*}
    &x_i' = x_i - a_i y && 1 \leq i \leq n
  \end{align*}
  where we choose $(a_1,\ldots,a_n) \in \C^n$ such that
  $P(x_1,\ldots,x_n,y)$ is not identically zero on the line $x_1' =
  \ldots = x_n' = 0$, and that $dg_{(0,0\ldots,0)}(a_1,\ldots,a_n) \ne
  1$.  Writing the solution $y = g(x_1,\ldots,x_n)$ in the new
  co-ordinate system, we find:
  \[
  y = g(x'_1+a_1 y, x'_2+a_2 y, \ldots,x'_n + a_n y)
  \]
  This equation has a unique power series solution $y =
  G(x_1',\ldots,x_n')$, and the argument in the preceding paragraph
  shows that $G \in \C\{x_1',\ldots,x_n'\}$.  To recover
  $g(x_1,\ldots,x_n)$ from $G(x_1',\ldots,x_n')$ we solve the
  equation:
  \[
  y = G(x_1 - a_1 y, x_2 - a_2 y,\ldots, x_n - a_n y)
  \]
  This too has a unique power series solution $y = g(x_1,\ldots,x_n)$,
  because the condition $dg_{(0,0\ldots,0)}(a_1,\ldots,a_n) \ne 1$
  implies that $dG_{(0,0\ldots,0)}(a_1,\ldots,a_n) \ne {-1}$.  On the
  other hand, the implicit function theorem shows that there is a
  unique analytic solution $y = v(x_1,\ldots,x_n)$ such that
  $v(0,\ldots,0) = 0$.  The power series expansion of $v$ at the
  origin must coincide with $g(x_1,\ldots,x_n)$; thus $g \in
  \C\{x_1,\ldots,x_n\}$.  The Lemma is proved.
\end{proof}

\begin{remark} 
  The same argument proves Givental's statements~(A) and~(B) for the
  ancestor potential of a compact toric \emph{orbifold}.  We need:
  \begin{itemize}
  \item the fact that orbifold Gromov--Witten theory defines a DMT
    (combine \cite{Teleman}*{\S1.7} with \cite{AGV})
  \item analyticity, semisimplicity, and tameness of the corresponding
    Frobenius manifold.
  \end{itemize}
  This last point would follow from an appropriate mirror theorem for
  toric orbifolds.  Such a mirror theorem has been formulated as a
  conjecture by Coates--Corti--Iritani--Tseng (see
  \cite{Iritani:integral}*{\S4}), proved for weighted projective
  spaces in \cite{CCLT}, and will be proved for general toric
  orbifolds $X$ in \cite{CCIT:forthcoming}.

  Tseng has announced a proof of statements (A) and (B) for compact
  toric orbifolds using localization in equivariant Gromov--Witten
  theory \cite{Milanov--Tseng}.  His version is somewhat stronger than
  ours, as it applies in the equivariant setting where the Frobenius
  manifold is not conformal.
\end{remark}

\section{NF-Convergence of Gromov--Witten Potentials: Statements}
\label{sec:statements}

\begin{definition} 
  \label{def:Frechet-convergence-ancestor}
  The genus-$g$ ancestor potential $\bar\cF^g_t$ is said to be
  \emph{NF-convergent} if the power series \eqref{eq:ancestor}
  converges absolutely and uniformly on an infinite-dimensional
  polydisc of the form:
  \begin{equation}
    \label{eq:formofpolydisc}
    \begin{cases}
      |y_i^\alpha|<  \epsilon  \frac{i!}{C^i}      
      & \text{$0 \leq i < \infty$,  $0 \leq \alpha \leq N$} \\
      |t^\alpha| < \epsilon 
      & \text{$0 \leq \alpha \leq N$} \\
      |Q_j| < \epsilon 
      & \text{$1 \leq j \leq r $}
    \end{cases}
  \end{equation}
  for some $C,\epsilon>0$.  The total ancestor potential $\cA_X$ is
  said to be \emph{NF-convergent} if the power series
  \eqref{eq:ancestor} defining each genus-$g$ ancestor potential
  $\bar\cF^g_t$ converges absolutely and uniformly on a polydisc of
  the form \eqref{eq:formofpolydisc} for some uniform $C,\epsilon>0$.
\end{definition}  

\begin{remark}
  ``NF'' here stands for ``nuclear Fr\'echet'': see
  Remark~\ref{rem:holomorphy} below.
\end{remark}

\begin{theorem} 
  \label{thm:convergenceimpliesFrechetconvergenceancestor}
  If the total ancestor potential $\cA_X$ is convergent in the sense
  of Definition \ref{def:ancestor-converge}, then it is NF-convergent
  in the sense of Definition \ref{def:Frechet-convergence-ancestor}.
\end{theorem}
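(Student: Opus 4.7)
The plan is to combine the rational structure of $\cA_X$ provided by Definition~\ref{def:ancestor-converge} with Cauchy estimates and the Feynman-diagrammatic expansion underlying Givental's quantization. I would first re-express each genus-$g$ ancestor potential using rationality as
\[
\bar{\cF}^g_t = \sum_{n\geq 0}\frac{1}{n!}\sum_{\substack{I=(i_1,\ldots,i_n),\,\alpha \\ i_j\neq 1 \\ i_1+\cdots+i_n \leq 3g-3+n}} C^{(g)}_{I,\alpha}(t,q_1)\, q_{i_1}^{\alpha_1}\cdots q_{i_n}^{\alpha_n},
\]
where $C^{(g)}_{I,\alpha}(t,q_1) = f_{I,\alpha}(t,q_1)/P(t,q_1)^{K(I,\alpha)}$. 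Because the discriminant $P$ is independent of $(I,\alpha)$ and satisfies $P(-\phi_0)=1$, continuity of $P$ yields a uniform polydisc $U=\{|y_1^a|<r_0,\,|t^\alpha|<\epsilon,\,|Q_j|<\epsilon\}$ on which $|P|\geq 1/2$, so every $C^{(g)}_{I,\alpha}$ extends analytically to $U$. Taylor expanding each $C^{(g)}_{I,\alpha}$ in $y_1$ at $y_1=0$ and applying Cauchy estimates on the smaller polydisc of $y_1$-radius $r_0/2$ gives
\[
|c^{(g)}_{I,\alpha,\beta}(t,Q)|\leq M^{(g)}_{I,\alpha}\cdot(r_0/2)^{-|\beta|}, \qquad M^{(g)}_{I,\alpha}:=\sup_U|C^{(g)}_{I,\alpha}|,
\]
where $c^{(g)}_{I,\alpha,\beta}$ denote the Taylor coefficients of $C^{(g)}_{I,\alpha}$ in $y_1$.

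Substituting these estimates back into the pure $(y,t,Q)$-expansion and summing the geometric series in $\beta$ (which converges for $\epsilon'<r_0/2$), NF-convergence on the polydisc $|y_i^\alpha|<\epsilon'\, i!/C'^{\,i}$, $|t^\alpha|,|Q_j|<\epsilon'$ reduces to verifying
\[
\sum_{n,I,\alpha} \frac{M^{(g)}_{I,\alpha}}{n!}\prod_k \frac{\epsilon'\, i_k!}{C'^{\,i_k}} < \infty.
\]
Granted a growth estimate $M^{(g)}_{I,\alpha}\leq A^n\prod_k B^{i_k}/i_k!$ with $A,B$ constants depending on $g$, the $i_k!$ factors cancel and the sum splits: the $i_k=0$ contributions produce an exponential sum of the form $(A(N+1)\epsilon')^n/n!$, while the remaining indices (with $i_k\geq 2$, using $i_j\neq 1$) contribute a finite geometric factor $\sum_{i\in\{0\}\cup\{2,3,\ldots\}}(B/C')^i<\infty$ provided $C'>B$; tameness further caps the number of such indices by $(3g-3+n)/2$, so the combinatorial factors stay under control.

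The main obstacle is therefore establishing the growth bound $M^{(g)}_{I,\alpha}\leq A^n\prod_k B^{i_k}/i_k!$. For this I would invoke the Feynman-diagrammatic expansion from the proof of Proposition~\ref{pro:Feynman} applied to the Givental formula $\cA_X = e^{-\frac{1}{48}\sum_i\log\Delta^i}\,\widehat{\Psi}\widehat{R}(\cT)$: each $C^{(g)}_{I,\alpha}$ is a \emph{finite} sum over decorated graphs whose edges contribute propagator entries $\Delta^{(i,\alpha),(j,\beta)}$ (built from the $R$-matrix) and whose vertices contribute Witten--Kontsevich correlators evaluated at $\by(z)=y_1 z$. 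The analyticity of $R(z)$ on a $z$-disc $|z|<\rho$ (Proposition~\ref{pro:R}) gives $|R_k|\leq C_0\rho^{-k}$, which controls the propagator entries, while the standard factorial growth of Witten--Kontsevich intersection numbers supplies the remaining factors. The delicate combinatorial point is matching the $1/i_k!$ in the desired bound against the $i_k!$ appearing in the NF-radii, using the tameness restriction to keep the number of graphs contributing to any fixed $(I,\alpha)$ finite and suitably controlled.
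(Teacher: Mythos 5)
Your proposal takes a genuinely different route from the paper's: you work directly with the rational Fock-space structure of $\cA_X$ and try to control the Feynman-diagram expansion from Proposition~\ref{pro:Feynman} by Cauchy estimates, whereas the paper first establishes NF-convergence of the \emph{descendant} potentials $\cF^g_X$ (Theorem~\ref{thm:descendant-convergence}), then pulls back through the ancestor--descendant coordinate change $(\bq,Q)\mapsto(\bx,t,Q)$ using the Nash--Moser inverse function theorem, and finally applies Lemma~\ref{lem:Frechet-Banach} to convert holomorphy on a $\cH_+$-neighbourhood into NF-convergence on a Banach ball, with uniformity in $g$ coming from the smooth-tameness of the coordinate change.

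However, your argument contains a gap that I think is fatal as written. You invoke ``the analyticity of $R(z)$ on a $z$-disc $|z|<\rho$ (Proposition~\ref{pro:R})'' to get $|R_k|\leq C_0\rho^{-k}$. Proposition~\ref{pro:R} only places $R_t$ in $\End(\C^{N+1})\otimes\C[\![z]\!]$: it is a \emph{formal} power series in $z$, and for a generic semisimple Frobenius manifold $R(z)$ diverges (this divergence is governed by the irregular singularity of the Dubrovin connection at $z=0$). Consequently $|R_k|$ grows factorially, not geometrically, and the propagator entries $\Delta^{(i,\alpha),(j,\beta)}$ inherit this factorial growth. This is not a cosmetic issue: the factorial divergence of $R$ is precisely \emph{why} the NF-convergence polydisc has radii shrinking like $i!/C^i$ rather than geometrically, and it is presumably why the paper sidesteps any direct combinatorial estimate of the Feynman sum in favour of the Nash--Moser/functional-analytic detour. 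Two further points you flag but do not resolve would also need care even if the $R$-estimate were fixed: your growth bound $M^{(g)}_{I,\alpha}\leq A^n\prod_k B^{i_k}/i_k!$ with $A,B$ depending on $g$ does not obviously produce the \emph{uniform} $C,\epsilon$ required by Definition~\ref{def:Frechet-convergence-ancestor} for the total potential, and (as you note) the growth of the Witten--Kontsevich vertex terms is factorial in $n$ rather than exponential, so the bound as stated needs modification.
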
 

\begin{remark} 
  NF-convergence of the total ancestor potential (Definition
  \ref{def:Frechet-convergence-ancestor}) is weaker than convergence
  of the total ancestor potential (Definition
  \ref{def:ancestor-converge}).  The rationality and the tameness in
  Definition \ref{def:ancestor-converge} do not follow from
  NF-convergence.
\end{remark} 

Theorem \ref{thm:convergence-ancestor} and
Theorem~\ref{thm:convergenceimpliesFrechetconvergenceancestor}
together immediately imply Theorem~\ref{thm:maintheoremancestors}.

\subsection{Convergence of the Descendant Potential}

\begin{definition}
  \label{def:descendant-converge} 
  The genus-$g$ descendant Gromov--Witten potential $\cF^g_X$ is said
  to be \emph{NF-convergent} if the power series
  \eqref{eq:genus_g_descendantpot} converges absolutely and uniformly
  on an infinite-dimensional polydisc of the form:
  \begin{equation} 
    \label{eq:convergence-domain-cFg} 
    \begin{cases}
      |t_i^\alpha|<  \epsilon  \frac{i!}{C^i}      
      & \text{$0 \leq i < \infty$, $0 \leq \alpha \leq N$} \\
      |Q_j| < \epsilon & 1 \leq j \leq r
    \end{cases}  
  \end{equation} 
  for some $C, \epsilon>0$.  We say that the total descendant
  Gromov--Witten potential $\cZ_X$ is \emph{NF-convergent} if
  the power series \eqref{eq:genus_g_descendantpot} defining each
  genus-$g$ descendant potential $\cF^g_X$ converges absolutely and
  uniformly on a polydisc of the form
  \eqref{eq:convergence-domain-cFg} for some uniform $C, \epsilon>0$.
\end{definition}

\begin{remark} 
  \label{rem:holomorphy} 
  A holomorphic function on a locally convex topological vector space
  over $\C$ can be defined as a complex G\^{a}teaux-differentiable
  function which is continuous \citelist{\cite{Dineen}\cite{Chae}}.  If $\cF_X^g$ is
  NF-convergent then it defines a holomorphic function on an
  $\epsilon$-ball of the Banach space:
  \begin{equation} 
    \label{eq:l_infinityC}
    l_{\infty}^C(H_X) = 
    \left\{ \bt(z) \in H_X \otimes \C[\![z]\!] : \sup_{i,\alpha} 
      \left( 
        \frac{|t_i^\alpha| C^{i}}{i!}  \right) < \infty \right \} 
  \end{equation} 
  equipped with the weighted $l_\infty$-norm:
  \begin{equation} 
  \label{eq:weighted-linfty}
  \| \bt \|_{\infty,\log C} = \sup_{i,\alpha}\left( 
    \frac{|t_i^\alpha| C^{i}}{i!}  \right)
  \end{equation} 
  If $\cF_X^g$ is NF-convergent then we can also view it as a
  holomorphic function on a neighbourhood of the origin of the nuclear
  Fr\'{e}chet space:
  \begin{equation} 
    \label{eq:H+} 
    \cH_+= 
    \left\{ \bt(z) \in H_X \otimes \C[\![z]\!] :
      \sup_{i,\alpha} \left(\frac{|t_i^\alpha| e^{in}}{i!}\right) < \infty
      \text{ for all $n \geq 0$}
    \right\} \subset l_{\infty}^C(H_X). 
  \end{equation} 
  The topology on $\cH_+$ is defined by countably many norms:
  \begin{align*}
    \|\bt\|_{\infty,n} = \sup_{i,\alpha} \left(\frac{|t_i^\alpha| e^{ni}}{i!}\right)
    && n=0,1,2,\dots
  \end{align*}
  This viewpoint is perhaps more natural.  
As we will see in Lemma~\ref{lem:Frechet-Banach}, 
a holomorphic function on a neighbourhood
  of zero in $\cH_+$ automatically extends to a holomorphic function
  on a neighbourhood of zero in $l_{\infty}^C(H_X)$ for \emph{some}
  $C>0$.
\end{remark} 

\begin{remark} 
  In unpublished work, Iritani has shown that the Gromov--Witten
  potential $\cF^g_X$ converges on a polydisc of the form
  \eqref{eq:convergence-domain-cFg} whenever the target space $X$
  admits a torus action with isolated fixed points and isolated
  1-dimensional orbits \cite{Iritani:localization}.
\end{remark} 

\begin{theorem} 
  \label{thm:convergence-genuszero} 
  If the non-descendant genus-zero potential $F^0_X$ is convergent in
  the sense of \S\ref{sec:convergence} then the genus-zero descendant
  potential $\cF^0_X$ is NF-convergent in the sense of
  Definition~\ref{def:descendant-converge}.
\end{theorem}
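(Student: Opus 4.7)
The plan is to deduce NF-convergence of the genus-zero descendant potential $\cF^0_X$ from analyticity of $F^0_X$ via the tree-level reconstruction: the genus-zero Topological Recursion Relations, String Equation, and Dilaton Equation together express every genus-zero descendant correlator in terms of derivatives of the primary potential $F^0_X$. Equivalently, one may invoke the tree-level specialization of Givental's ancestor--descendant formula, which writes $\cF^0_X$ as the pullback of the tree-level ancestor potential $\bar\cF^0_t$ under the linear change of variables coming from the fundamental solution $S_t(z)$ of the Dubrovin connection, together with a quadratic shift and an implicit determination of the primary parameter $t$ from $\bt$. Analyticity of $F^0_X$ implies analyticity of the quantum product, and hence of $S_t(z)$ as a formal power series in $z^{-1}$ with coefficients depending analytically on $t$ in a neighbourhood \eqref{eq:LRLnbhd} of the large-radius limit point.

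The crucial structural input is the genus-zero dimension constraint: the correlator $\corr{t_{i_1}\psi^{i_1},\ldots,t_{i_n}\psi^{i_n}}^X_{0,n,d}$ vanishes unless $\sum_k i_k \le n-3 + \dim_\C X + c_1(X)\cdot d$. For each fixed $n$ and $d$, only polynomially many descendant multi-indices contribute, and the total $\psi$-degree of any nonvanishing contribution is $O(n+d)$. This is precisely the feature which allows the factorial weight $i!/C^i$ in the definition \eqref{eq:convergence-domain-cFg} of the NF polydisc to absorb the descendant variables.

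With these ingredients, the proof proceeds in two steps. First, I would establish NF-convergence of the tree-level ancestor potential $\bar\cF^0_t$ directly: the dimension constraint implies that for each $m$ the coefficient of $y_{j_1}^{\alpha_1}\cdots y_{j_m}^{\alpha_m}$ involves only correlators satisfying $\sum_k j_k \le m-3 + \dim_\C X + c_1(X)\cdot d$, so the sum over descendant multi-indices reduces to one with controlled combinatorics, and the analyticity of the quantum product in $t$ yields NF-convergence on a polydisc of the form \eqref{eq:formofpolydisc}. Second, I would verify that the change of variables induced by $\widehat{S}_t^{-1}$ defines a continuous map between nuclear Fr\'echet spaces of the form \eqref{eq:H+}: the $z^{-k}$-coefficient of $S_t(z)$ is an analytic function of $t$, and the $1/k!$ growth one expects from a series asymptotic in $z^{-1}$ pairs naturally with the weights $i!/C^i$ in the weighted $l_\infty$-norm \eqref{eq:weighted-linfty}.

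The main technical obstacle will be the bookkeeping of weights through the Birkhoff-factorization change of variables $\widehat{S}_t^{-1}$, and in particular verifying that the implicit determination $t = t(\bt)$ is analytic on a sufficiently large NF polydisc. Once this is established, the weighted-$l_\infty$ estimate on $\bar\cF^0_t$ transfers to $\cF^0_X$, giving NF-convergence in the sense of Definition~\ref{def:descendant-converge}.
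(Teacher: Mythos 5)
Your high-level plan correctly identifies the two essential ingredients the paper uses — tree-level reconstruction from $F^0_X$ via the fundamental solution, and the matching of the $1/k!$ decay of the $z^{-k}$-coefficients of $M(t,z)$ against the factorial weights $i!/C^i$ in the NF polydisc (the latter is exactly Lemma~\ref{lem:ML-estimate} in the paper). But there are two genuine gaps.

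First, and most seriously, you flag ``verifying that the implicit determination $t=t(\bt)$ is analytic on a sufficiently large NF polydisc'' as the main obstacle, and you propose to handle the change of variables by a continuity argument. That is not enough: the map $(\bx,t,Q)\mapsto\bq$ defined by \eqref{eq:tbx-bq} goes between nuclear Fr\'echet spaces, and the ordinary inverse function theorem fails there even for smooth maps with invertible derivative. The paper's proof invokes the Nash--Moser inverse function theorem, which requires verifying that the map is \emph{smooth tame} and that the linearized operator admits a continuous \emph{tame} inverse (Lemmas~\ref{lem:product-estimate} and \ref{lem:ML-estimate} supply the needed tame estimates). Without the Nash--Moser framework, the inversion step — which is the technical heart of the argument — does not go through, and ``continuity'' is not the right notion.

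Second, your Step~1 proposes to establish NF-convergence of the tree-level ancestor potential $\bar\cF^0_t$ directly from a dimension constraint. The constraint you write for $\bar\cF^0_t$, namely $\sum_k j_k \le m-3+\dim_\C X+c_1(X)\cdot d$, is the \emph{descendant} bound on $X_{0,n,d}$; for ancestor classes pulled back from $\cMbar_{0,m}$ the correct and much stronger bound is $\sum_k j_k\le m-3$ (independent of $d$), which is the tameness property. More importantly, the paper does not argue this way at all: it bypasses $\bar\cF^0_t$ entirely, using the Dubrovin--Dijkgraaf--Witten reconstruction formula $\cF^0_X = \tfrac12\Omega(\bp,\bq)$ (Theorem~\ref{thm:reconstruction}), which is a closed-form bilinear expression in the Fr\'echet variables, manifestly holomorphic on $\cH_{\ge 1}\times\Delta_\epsilon^{N+1+r}$ once the analytic estimates on $M(t,z)$ are in place. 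Your dimension-count route, even with the corrected bound, would still require nontrivial estimates on the coefficients of $\bar\cF^0_t$ as functions of $t$ and the low descendant variables; the paper's symplectic formula avoids this entirely and is the cleaner path. Finally, the transfer from holomorphy on a Fr\'echet neighbourhood to NF-convergence on a Banach polydisc is itself a nontrivial step (Lemma~\ref{lem:Frechet-Banach}), which your plan does not address.
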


\begin{theorem} 
  \label{thm:descendant-convergence} 
  If the total ancestor potential $\cA_X$ is convergent in the sense
  of Definition~\ref{def:ancestor-converge} then the total descendant
  potential $\cZ_X$ is NF-convergent in the sense of
  Definition~\ref{def:descendant-converge}
\end{theorem}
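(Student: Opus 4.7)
The plan is to invoke Givental's ancestor-to-descendant formula, which expresses $\cZ_X$ as a quantization of $\cA_t$ by the $S$-matrix of the Dubrovin connection.  Fix a point $t$ in a neighbourhood of the large-radius limit, and let $S_t(z) = \Id + S_1(t)\,z^{-1} + S_2(t)\,z^{-2} + \cdots$ denote the fundamental solution to the Dubrovin connection and $W(t)$ a scalar function determined by the genus-zero data.  The ancestor-to-descendant correspondence reads
\[
\cZ_X(\bt) \;=\; e^{W(t)/\hbar}\,\widehat{S_t^{-1}}\,\cA_t(\by),
\]
where the quantization $\widehat{S_t^{-1}}$ consists of the propagator exponential $\exp\big(\tfrac{\hbar}{2}\Delta\big)$ followed by the linear substitution $y_n^\alpha = \sum_{k \ge 0}[S_k(t)\,q_{n+k}]^\alpha$ with $q_i = t_i - \delta_{i,1}\phi_0$.

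\textbf{Step 1 (analyticity and Cauchy bounds on the $S$-matrix).}  Convergence of $\cA_X$ in the sense of Definition~\ref{def:ancestor-converge} implies that the quantum product on $H_X$ depends analytically on $t$ in a neighbourhood \eqref{eq:LRLnbhd} of the large-radius limit, since its structure constants equal the third $y_0$-derivatives of $\bar{\cF}^0_t$ at $\by=0$.  This is Genus-Zero Convergence \eqref{eq:genuszeroconvergence}, so the Dubrovin connection is analytic, and both $S_k(t)$ and $W(t)$ are analytic in $t$ there.  By flatness and pairwise commutativity of the operators $\phi_\alpha\ast$, integrating $\partial_\alpha S_t = (1/z)(\phi_\alpha\ast)S_t$ yields a path-independent exponential $S_t(z) = \exp\big((1/z)U(t)\big)$, giving $S_k(t) = U(t)^k/k!$ and hence the Cauchy-type bound $\|S_k(t)\| \le M^k/k!$ on a smaller neighbourhood, for some $M>0$.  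By Theorem~\ref{thm:convergenceimpliesFrechetconvergenceancestor}, $\cA_X$ is NF-convergent, so each $\bar{\cF}^g_t$ converges absolutely on a polydisc \eqref{eq:formofpolydisc} with uniform constants $C,\epsilon>0$.

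\textbf{Step 2 (the quantization preserves NF-convergence).}  For the linear substitution, the factorial NF-weights absorb the decay of $S_k(t)$: assuming $|t_i^\alpha| < \epsilon\, i!/C^i$ one has
\[
|y_n^\alpha| \;\le\; \sum_{k \ge 0}\frac{M^k}{k!}\,\epsilon\,\frac{(n+k)!}{C^{n+k}} \;=\; \frac{\epsilon\,n!}{C^n}\sum_{k \ge 0}\binom{n+k}{k}\!\left(\frac{M}{C}\right)^{\!k},
\]
and, using $\binom{n+k}{k} \le 2^{n+k}$ and choosing $C>2M$, this is bounded by $\epsilon'\,n!/(C')^n$ for suitable $\epsilon',C'>0$.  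Hence the substitution maps NF-polydiscs into NF-polydiscs.  For the propagator exponential, each genus-$g$ coefficient of $\widehat{S_t^{-1}}\cA_t$ is given by a finite Feynman sum (as in the proof of Proposition~\ref{pro:Feynman}), with edge terms polynomial in the $S_k(t)$; the weighted-$\ell_\infty$ estimates propagate genus by genus, with uniformity across $g$ inherited from the NF-convergence of $\cA_X$.  The analytic prefactor $e^{W(t)/\hbar}$ contributes only a convergent factor.  Combining these pieces, $\cZ_X$ is NF-convergent on a polydisc of the form \eqref{eq:convergence-domain-cFg}, as required.

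\textbf{Main obstacle.}  The principal difficulty lies in Step 2: the weighted-$\ell_\infty$ bookkeeping through the Feynman expansion of the propagator exponential, and particularly uniformity of the NF-constants across all genera.  The possible divergence of the formal series $\sum_k S_k(t)\,z^{-k}$ in $z$ is \emph{not} an issue, since only the pointwise decay $\|S_k(t)\| \le M^k/k!$ in $k$ is used, and this decay is exactly what is needed to interact compatibly with the factorial NF-weights $i!/C^i$.  The remaining delicate point is to track through the Feynman expansion enough information to extract uniform $(C,\epsilon)$ over $g$; this ultimately rests on the uniformity already supplied by Theorem~\ref{thm:convergenceimpliesFrechetconvergenceancestor}.
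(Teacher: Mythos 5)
Your high-level plan — use the ancestor--descendant correspondence and push the convergence of $\cA_X$ through to $\cZ_X$ — is the correct starting point, and is indeed the route the paper takes (via the Kontsevich--Manin version, Theorem~\ref{thm:ancestor-descendant}). However, there are several concrete gaps and errors.

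\textbf{The exponential formula for the $S$-matrix is false.} You claim that $S_t(z) = \exp\bigl((1/z)U(t)\bigr)$ ``by flatness and pairwise commutativity of the operators $\phi_\alpha\ast$.'' The operators $\phi_\alpha \ast_t$ at a \emph{fixed} $t$ do commute, but they do not commute as $t$ varies, so the solution to $\partial_\alpha S_t = (1/z)(\phi_\alpha\ast_t) S_t$ is a path-ordered exponential, not a plain one, and does not simplify to $\exp(U(t)/z)$. The factorial decay $\|S_k(t)\| \le M^k/k!$ \emph{is} correct, but requires a genuine estimate. The paper obtains it as Lemma~\ref{lem:ML-estimate}, quoting Lemma~4.1 of \cite{Iritani:convergence}, which bounds the coefficients $M_{\bm,i}$ of the inverse fundamental solution by $A C^{|\bm|+i}/i!$ directly from the WDVV recursion.

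\textbf{The change of variables is not the linear substitution you analyze.} You fix a point $t$ and estimate $\by = [S_t\bq]_+$ as a linear map of $\bq$. But in the ancestor--descendant relation the base point $t$ is \emph{itself a function of $\bq$}: it is determined implicitly by the constraint $x_0 = 0$ (equivalently, by $[L(t,z)\bq(z)]_0 = 0$ in the paper's notation). This makes the map $\bq \mapsto (t, \bx)$ genuinely nonlinear, and controlling it is the main technical burden. The paper resolves this by invoking the Nash--Moser inverse function theorem to show $(\bx, t, Q)\mapsto(\bq, Q)$ is a local isomorphism of Fr\'echet-space neighbourhoods, with a smooth tame inverse, and this tameness is precisely what gives the $g$-independent Lipschitz bound needed for Lemma~\ref{lem:Frechet-Banach}. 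Your Step~2 never confronts this inversion, so the proof does not go through as written.

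\textbf{The Feynman sum for $\widehat{S^{-1}_t}$ is a red herring at positive genus.} For a lower-triangular operator $S(z) = \Id + S_1 z^{-1} + \cdots$, the propagator in Givental's quantization contributes a purely $\hbar^{-1}$ (genus-zero) quadratic form $W_t(\bq,\bq)/2\hbar$; it does \emph{not} generate a nontrivial Feynman expansion at genus $g \ge 1$. For $g\ge 1$ the relation is a bare substitution plus an additive $F^1(t)$ correction (exactly the content of Theorem~\ref{thm:ancestor-descendant}). Proposition~\ref{pro:Feynman}, which you invoke, is about the propagator for \emph{upper}-triangular $R$-matrices, where the sum is over graphs with nontrivial loops. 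Appealing to it here misidentifies where the difficulty lies.

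\textbf{Circularity.} You cite Theorem~\ref{thm:convergenceimpliesFrechetconvergenceancestor} for uniform NF-convergence of $\bar\cF^g_t$. In the paper's logical order that theorem is itself proved \emph{using} the descendant-convergence statement you are trying to establish, so this step would be circular. The paper instead reads off, directly from Definition~\ref{def:ancestor-converge} (rationality in $y_1$ and tameness), that $\bar\cF^g_t(\bx)$ with $x_0=0$ is holomorphic and uniformly continuous in the relevant norms, and then transports that through the Nash--Moser change of variables.
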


Theorem~\ref{thm:convergence-ancestor} and
Theorem~\ref{thm:descendant-convergence} together immediately imply
Theorem~\ref{thm:maintheorem}.

\section{NF-Convergence of Gromov--Witten Potentials: Proofs}
\label{sec:proofs}

In this section we prove the results about NF-convergence of
descendant and ancestor potentials stated in \S\ref{sec:statements}.
The key ingredients are the Kontsevich--Manin ancestor-descendant
relation, the Nash--Moser inverse function theorem, and a version of
Givental's symplectic space based on a nuclear Fr\'echet space
(see \S\ref{sec:Giventalnuclear}) which may be of
independent interest.

\subsection{Setting $Q_1 = \cdots = Q_r = 1$ makes sense when
  $\cF^g_X$ is NF-convergent}
\label{sec:specializationmakesense}

Making the argument explicit, we write the genus-$g$ descendant 
potential $\cF^g_X$ as 
\[
\cF^g_X(\bq, Q_1,\dots,Q_r) 
\]
where $\bq$ is the dilaton-shifted co-ordinate appearing in \S
\ref{sec:dilaton} and $Q_1,\dots,Q_r$ are Novikov variables.  
The Divisor Equation \cite{AGV}*{Theorem~8.3.1} implies that:
\begin{align}
\label{eq:divisorequation-cFg} 
\begin{split} 
\cF^g_X([e^{-\delta/z} \bq(z)]_+, Q_1,\dots,Q_r) & =  
\cF^g_X(\bq(z), e^{\delta_1} Q_1,\dots,e^{\delta_r} Q_r) \\ 
& +  
\frac{\delta_{g,0}}{2} \Omega(e^{-\delta/z} \bq(z), [e^{-\delta/z} \bq(z)]_+) 
- 
\frac{\delta_{g,1}}{24} \int_X \delta \cup c_{D-1}(X) 
\end{split} 
\end{align} 
where $\delta = \sum_{\alpha=1}^r \delta_\alpha \phi_\alpha \in
H^2(X)$, $D=\dim X$, $[\cdots]_+$ denotes the power series truncation of a
Laurent series in $z$ and $\Omega$ is Givental's symplectic form 
in \eqref{eq:symplecticform} (with $V = H_X$). 
The formula follows by integrating \cite{Coates-Givental}*{Equation (8)} 
and using \cite{Givental:quantization}*{Proposition 5.3}. 
Equation \eqref{eq:divisorequation-cFg} is an
equality between formal power series in the variables $t_i^\alpha$,
$Q_i$ and $\delta_\alpha$, where:
\[
t_i^\alpha = 
\begin{cases}
  q_i^\alpha + 1 & \text{if $(i,\alpha)=(1,0)$} \\
  q_i^\alpha & \text{otherwise}
\end{cases}
\]
Note that the specialization $Q_1=\cdots =Q_r=1$ of the right-hand
side of \eqref{eq:divisorequation-cFg} makes sense as analytic function 
on a region $\{(\bq(z) = \bt(z) - \phi_0 z, \delta) : 
\|\bt\|_{\infty, \log C} < \epsilon, |e^{\delta_\alpha}| <\epsilon\}
\subset l^C_\infty(H_X)\times H^2(X;\C)$ 
if $\cF_X^g$ is NF-convergent 
(see \eqref{eq:l_infinityC}, \eqref{eq:weighted-linfty} for the Banach space 
$l^C_\infty(H_X)$). 
\begin{lemma} 
Assume that the genus-$g$ descendant potential $\cF^g$ 
is NF-convergent in the sense of Definition 
\ref{def:descendant-converge}.  
Then the specialization $Q_1= \cdots = Q_r=1$ of the 
right-hand side of \eqref{eq:divisorequation-cFg} depends only on 
the point $[e^{-\delta/z} \bq(z)]_+ \in H_X\otimes \C[\![z]\!]$. 
\end{lemma} 
\begin{proof} 
Suppose $[e^{-\delta/z} \bq(z)]_+ = [e^{-\delta'/z} \bq'(z)]_+$. 
We need to show that the specialization $Q_1= \cdots = Q_r=1$ of the 
right-hand side of \eqref{eq:divisorequation-cFg} 
has the same value at $(\bq,\delta)$ and $(\bq',\delta')$. 
This follows by applying \eqref{eq:divisorequation-cFg} itself to 
the relation $\bq'(z) = [e^{(\delta'-\delta)/z} \bq(z)]_+$. 
\end{proof} 

The lemma allows us to define a holomorphic function 
$\cF^g_{X, \rm an}$ as follows.

\begin{definition-proposition} 
  \label{def-pro:specialization}
  Assume that the genus-$g$ descendant potential $\cF^g$ is 
 NF-convergent in the sense of Definition
  \ref{def:descendant-converge}.  
Recall the definition of the Banach space $l_{\infty}^C(H_X)$ 
  in Remark~\ref{rem:holomorphy}, and set:
  \[
  B_\epsilon\left(l_{\infty}^C(H_X)\right) = \left \{ \bt(z) \in
    l_{\infty}^C(H_X) : \|\bt\|_{\infty, C} <\epsilon \right\}
  \]
  Then there exists a holomorphic function:
  \begin{equation}
    \label{eq:analyticdescendantpot} 
    \cF^g_{X, \rm an} \colon 
    \bigcup_{\substack{\delta\in H^2(X;\C),\\
        \Re(\delta_i) < \log \epsilon }} 
    \left [ 
      e^{-\delta/z} \Big( {-\phi_0} z +
        B_\epsilon \left (l_{\infty}^C(H_X)\right) \Big) 
    \right ]_+
    \to \C 
  \end{equation} 
  such that
\begin{align} 
    \label{eq:def-cFg-an} 
\begin{split} 
    \cF^g_{X, \rm an}([e^{-\delta/z} \bq(z)]_+) 
    & = \cF^g_X(\bq, e^{\delta_1},\dots, e^{\delta_r}) \\
 & +  
\frac{\delta_{g,0}}{2} \Omega(e^{-\delta/z} \bq(z), [e^{-\delta/z} \bq(z)]_+) 
- 
\frac{\delta_{g,1}}{24} \int_X \delta \cup c_{D-1}(X) 
\end{split}  
\end{align} 
  We refer to $\cF^g_{X,\rm an}$ as \emph{the specialization 
    of $\cF^g_X$ to $Q_1=\cdots =Q_r =1$}. 
\end{definition-proposition}

\subsection{Fundamental Solution} 

Recall the definition of the Dubrovin connection $\nabla$ in
\S\ref{sec:Dubrovinconnection}. Consider the \emph{fundamental
  solution} $L\in \End(H_X)\otimes \Lambda[\![t]\!][\![z^{-1}]\!]$
defined by:
\begin{equation}
  \label{eq:fundamentalsolution}
  L(t,z) v = v + 
  \sum_{d \in \NE(X)} \sum_{n=0}^{\infty} \sum_{\epsilon = 0}^{N} 
  \frac{Q^d}{n!} 
  \corr{\frac{v}{z-\psi},t,\ldots,t,\phi^\epsilon}^X_{0,n+2,d}
  \phi_\epsilon
\end{equation}
where $v \in H_X$. The expression $v/(z-\psi)$ in the correlator 
should be expanded in the series $\sum_{n=0}^\infty 
{v \psi^n}{z^{-n-1}}$.  
The fundamental solution satisfies:
\begin{align*}
  & \nabla_{\parfrac{}{t^i}}\big(L(t,z) z^{-\mu} z^{-\rho} v \big) = 0 \\
  & \nabla_{z \parfrac{}{z}}\big(L(t,z) z^{-\mu} z^{-\rho} v \big) = 0 
\end{align*}
for all $v \in H_X$, where $\rho = c_1(X)$ and the endomorphisms
$z^{-\mu}$ and $z^{-\rho}$ of $H_X$ are defined by $z^{-\mu} =
\exp({-\mu} \log z)$ and $z^{-\rho} = \exp({-\rho} \log z)$. 
The fundamental solution also satisfies:
\[
\left(L(t,-z) v, L(t,z) w\right) = (v,w) 
\]
for $v,w \in H_X$, where $(\cdot,\cdot)$ denotes the Poincar\'{e}
pairing of $H_X$, and so the inverse fundamental solution $M(t,z)=
L(t,z)^{-1}$ coincides with the adjoint of $L(t,-z)$:
\begin{equation} 
  \label{eq:inversefundsol} 
  M(t,z) v:= v + 
  \sum_{d \in \NE(X)} \sum_{n=0}^{\infty} \sum_{\epsilon = 0}^{N} 
  \frac{Q^d}{n!} 
  \corr{\frac{\phi^\epsilon}{-z-\psi},t,\ldots,t,v}^X_{0,n+2,d}
  \phi_\epsilon. 
\end{equation} 
The Divisor Equation for descendant invariants
\cite{AGV}*{Theorem~8.3.1} implies that:
\begin{equation} 
\label{eq:L-divisoreq} 
L(t,z) v = e^{\delta/z} v +  
\sum_{d \in \NE(X)} \sum_{n=0}^{\infty} \sum_{\epsilon = 0}^{N} 
\frac{e^{d\cdot \delta}Q^d}{n!} 
\corr{\frac{e^{\delta /z} v }{z-\psi}, t' ,\ldots, t', 
\phi^\epsilon}^X_{0,n+2,d}
\phi_\epsilon, 
\end{equation} 
where $t =\delta + t'$, $\delta \in H^2(X)$, $t' \in  
\bigoplus_{p\neq 1} H^{2p}(X)$.

If the genus-zero Gromov--Witten potential $F^0_X$ converges in the
sense of \S\ref{sec:convergence} then the fundamental solution with
$Q_1=\cdots = Q_r=1$ depends analytically on both $t$ and $z$, where
$t$ lies in a neighbourhood \eqref{eq:LRLnbhd} of the large-radius
limit point and $z$ is any point of $\Cstar$.

\subsection{Ancestor--Descendant Relation} 
In this section we distinguish the variables for descendant potentials
and ancestor potentials.  Let $\bx=(x_0,x_1,x_2,\dots)$ be a sequence
of variables in $H_X$ with $x_i = \sum_{\alpha=0}^N x_i^\alpha
\phi_\alpha \in H_X$.  Let $\bq =(q_0,q_1,q_2,\dots)$ be a sequence of
variables in $H_X$ with $q_i = \sum_{\alpha=0}^N q_i^\alpha
\phi_\alpha \in H_X$ as before.  We consider the change of variables:
\begin{equation} 
\label{eq:anc-dec-variables}
\bq(z) = [M(t,z) \bx(z)]_+  
\end{equation} 
between $\bx$ and $\bq$ for some $t\in H_X$.  Here $[\cdots]_+$
denotes the truncation of a $z$-series, $\bx(z) = \sum_{i=0}^\infty
x_i z^i$, $\bq(z) = \sum_{i=0}^\infty q_iz^i$ and $M(t,z)$ is the
inverse fundamental solution given in \eqref{eq:inversefundsol}.  We
relate the variables $\bq$, $\bx$ with the variables $\bt$, $\by$ by
the dilaton shift (cf.~\S\ref{sec:dilaton}):
\begin{align} 
\label{eq:Dilaton:anc-dec}
\bq(z) = \bt(z) -  \phi_0 z
&&
\bx(z) = \by(z) - \phi_0 z 
\end{align}
As in \S \ref{sec:GW-potentials}, we use $\bt$ as arguments for the
descendant potential $\cF^g_X$ and $\cZ_X$ and use $t, \by$ as
arguments for the ancestor potential $\bar\cF^g_X$ and $\cA_X$,
i.e.~$\cZ_X$ is a formal power series in $\bt$ and $\cA_X$ is a formal
power series in $t$ and $\by$.

\begin{theorem}[{Kontsevich--Manin 
\cite[Theorem 2.1]{Kontsevich--Manin:relations}, 
Givental \cite[\S 5]{Givental:quantization}, 
Coates--Givental \cite[Appendix 2]{Coates-Givental}}] 
\label{thm:ancestor-descendant}
Let $\cF^g$ and $\bar\cF^g_t$ denote the genus-$g$ descendant and
ancestor potentials of a smooth projective variety $X$.  We have:
\[
\cF^g(\bq) = 
\begin{cases}
   F^1(t) + \bar{\cF}^1_t(\bx) & \text{if $g=1$} \\
   \bar{\cF}^g_t(\bx) & \text{if $g \geq 2$}
\end{cases}
\]
under the change of variables given in \eqref{eq:anc-dec-variables}. 
Here $F^1$ is the non-descendant genus-$1$ potential. 
\end{theorem}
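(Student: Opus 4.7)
The plan is to express each descendant correlator appearing in $\cF^g_X(\bq)$ as a sum over boundary configurations on $X_{g,m+n,d}$, extracting the change of variables $\bq(z) = [M(t,z)\bx(z)]_+$ from the geometry of the forgetful-stabilization map $p_m \colon X_{g,m+n,d} \to \cMbar_{g,m}$. The starting point is the standard comparison formula $\psi_i - \bar{\psi}_i = D_i$, where $D_i$ is the boundary divisor on $X_{g,m+n,d}$ parametrizing stable maps in which the $i$-th marked point lies on a rational component contracted by $p_m$. Iterating this comparison, each monomial $\psi_i^{j_i}$ can be rewritten as a polynomial in $\bar{\psi}_i$ whose coefficients are supported on nested boundary strata indexed by rational trees of contracted components bearing some of the $n$ insertions of $t$.

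Next, I would apply the splitting axiom for the virtual fundamental class to factor each boundary contribution into an integral over a rational tail (a genus-zero moduli of stable maps with some insertions of $t$ and a gluing insertion $\phi_\epsilon$) paired against an integral over the stabilized ancestor curve in $\cMbar_{g,m}$. After summing over all rational tails at the $i$-th marked point, together with their insertions of $t$ and weights $Q^d$, the generating function for these tails is exactly the matrix action of $M(t,z)$, as is apparent from the defining formula \eqref{eq:inversefundsol}. Reorganizing the double sum --- over $n$ insertions of $t$ on the main curve and over the tails' own insertions of $t$ --- gives the substitution $\bq(z) = [M(t,z)\bx(z)]_+$; the truncation $[\cdots]_+$ reflects the fact that only the holomorphic part at $z=0$ can propagate back into an ancestor correlator, since the ancestor inputs $\bx(z)$ are power series in $z$.

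After this bookkeeping, what remains is precisely $\bar{\cF}^g_t(\bx)$ for $g \ge 2$. The $g=1$ correction $F^1(t)$ appears because $\cMbar_{1,0}$ is empty: the descendant correlators with $m=0$ (all insertions living on rational tails attached to a genus-one closed component) cannot be absorbed into any ancestor correlator, and their sum is by definition the non-descendant genus-one potential $F^1(t)$. The $g=0$ case is compatible with the formula because the excluded cases $m < 3$ are controlled by the string and dilaton equations, which are the genus-zero shadow of the same identity for $M(t,z)$.

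The main obstacle is combinatorial bookkeeping rather than conceptual: one must carefully track the symmetry factors $1/n!$ across every partition of the $n$ insertions of $t$ between the main curve (which becomes the ancestor input) and the various rational tails (which assemble into $M(t,z)$), and verify that the nested application of the boundary comparison $\psi_i - \bar{\psi}_i = D_i$ reproduces the full matrix expansion of $M(t,z)$ rather than only its leading term. A clean way to organize this is to argue by induction on the total descendant degree $i_1 + \cdots + i_n$, using the fact that the fundamental solution $L(t,z)$ itself satisfies a flatness equation with respect to the Dubrovin connection, so that its inverse $M(t,z)$ is characterized by a recursion that matches the boundary-pull-off recursion.
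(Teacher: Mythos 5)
The paper does not prove this theorem: it is cited directly as Kontsevich--Manin, Theorem~2.1 (equivalently, Givental \cite{Givental:quantization}*{Proposition 5.3}), so there is no proof in the paper to compare against. Your sketch is a faithful reproduction of the standard argument, and the key ingredients are all present: the boundary comparison $\psi_i-\bar\psi_i=D_i$ together with $D_i\cdot\bar\psi_i=0$, which gives the pull-off recursion $\psi_i^k=\bar\psi_i^k+D_i\psi_i^{k-1}$; the splitting axiom to factor each boundary contribution into a genus-zero tail (whose generating series is exactly $M(t,z)$ of equation~\eqref{eq:inversefundsol}) and a smaller ancestor correlator; and the identification of the truncation $[\cdot]_+$ with the holomorphic part of $z$ that feeds back into ancestor inputs. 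Your explanation of the $g=1$ constant $F^1(t)$ via the vacuity of $\cMbar_{1,0}$ is also correct.

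Two small caveats. First, the closing remark that the $g=0$ case is ``compatible because the excluded cases $m<3$ are controlled by the string and dilaton equations'' is imprecise: the $g=0$ ancestor--descendant relation in fact acquires an additional quadratic correction $\cF^0(\bq)=\bar\cF^0_t(\bx)+\tfrac12 W_t(\bq,\bq)$ (this is \cite{Givental:quantization}*{Proposition 5.3}, used in the proof of Theorem~\ref{thm:convergenceimpliesFrechetconvergenceancestor}), not merely an application of string/dilaton. However the theorem being proved excludes $g=0$, so this does not affect your argument. Second, your sketch leaves the central combinatorial verification --- that iterating the recursion and tracking symmetry factors $1/n!$ reproduces the full matrix $M(t,z)$ rather than its linearisation --- as bookkeeping; that is acceptable for a sketch but is where the substantive work lies in the cited reference, and the induction scheme you propose (on total $\psi$-degree, matched against the flatness recursion for $L(t,z)$) is the right way to organise it.
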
 

\begin{remark} 
  In terms of the dilaton-shifted co-ordinates
  \eqref{eq:Dilaton:anc-dec}, the change of variables
  \eqref{eq:anc-dec-variables} can be written as:
  \begin{align}
    \label{eq:expansion-changeofvar}  
    \begin{split}
      t_0 & = t + y_0 + M_1(t) y_1 + M_2(t) y_2 + \cdots \\
      t_1 & = y_1 + M_1(t) y_2 + M_2(t) y_3+ \cdots \\
      t_2 & = y_2 + M_1(t) y_3 + M_2(t) y_4 + \cdots \\ 
      & \ \, \vdots 
    \end{split} 
  \end{align} 
  Here we write $M(t,z) = \Id + \sum_{n=1}^\infty M_n(t)z^{-n}$ and
  use $[M(t,z) (-\phi_0 z)]_+ = -\phi_0 z + t$.  This defines an
  isomorphism:
  \[
  \Lambda[\![\bt]\!][\![t]\!] \overset{\cong}{\longrightarrow}
  \Lambda[\![\by]\!][\![t]\!]
  \]
  because $M_n(0) \equiv 0$ modulo $(Q_1,\dots,Q_r)$ for $n\ge 1$.
\end{remark} 

\subsection{Analytic Preliminaries} 
\label{sec:Giventalnuclear}
Consider the family of Hilbert norms $\|\cdot\|_n$, $n=0,1,2,\dots$ on
$\C[\![z,z^{-1}]\!]$:
\begin{align*}
  \|\ba(z) \|_n = \left ( 
    \sum_{j\in \Z} \frac{|a_j|^2}{|\Gamma(\frac{1}{2}+j)|^2} 
    e^{2nj} \right )^{\frac{1}{2}}  
&& \text{where $\ba(z) = \sum_{j\in \Z} a_j z^j$}
\end{align*}
and set:
\[
\C\{\!\{z,z^{-1}\}\!\} = \left\{
\ba(z) \in \C[\![z,z^{-1}]\!] : \|\ba(z)\|_n < \infty \text{ for all } n\gg 0
\right\}. 
\]
We write :
\begin{align*} 
  \C\{\!\{z\}\!\} & = \C[\![z]\!] \cap \C\{\!\{z,z^{-1}\}\!\} = 
  \{ \ba(z) \in \C[\![z]\!] : \text{$\|\ba(z)\|_n < \infty$ for all $n
    \geq 0$}\}, \\ 
  \C\{\!\{z^{-1}\}\!\} &= \C[\![z^{-1}]\!] \cap \C\{\!\{z,z^{-1}\}\!\} 
  = \{ \ba(z) \in \C[\![z^{-1}]\!] 
  : \text{$\|\ba(z)\|_n < \infty$ for some $n \geq 0$}\}.  
\end{align*} 
Note that the norms are increasing $\|\cdot\|_0 \le\|\cdot\|_1\le
\|\cdot\|_2 \le \cdots $ on $\C\{\!\{z\}\!\}$ and $\C\{\!\{z\}\!\}$ is
a nuclear Fr\'{e}chet space whose topology is defined by these norms.
The norms are decreasing $\|\cdot \|_0\ge \|\cdot \|_1 \ge \|\cdot\|_2
\ge \cdots$ on $\C\{\!\{z^{-1}\}\!\}$ and $\C\{\!\{z^{-1}\}\!\}$ is an
inductive limit of Hilbert spaces; $\C\{\!\{z^{-1}\}\!\}$ with the
inductive limit topology is the strong dual of $\C\{\!\{z\}\!\}$ and
is a nuclear (DF) space.  The following Lemma shows that
$\C\{\!\{z,z^{-1}\}\!\}$ is a topological ring.
\begin{lemma} 
  \label{lem:product-estimate}
  For $\ba(z)$, $\bb(z) \in \C\{\!\{z,z^{-1}\}\!\}$, the product
  $\ba(z) \bb(z)$ converges. Moreover, we have:
  \begin{align*} 
    \|[\ba(z) \bb(z)]_+\|_{n-1} & \le 
    5
    (\|\ba(z)\|_{n+2} + \|\ba(z)\|_{n-2})
    (\|\bb(z)\|_{n+2} + \|\bb(z)\|_{n-2}) 
    \\
    \|[\ba(z) \bb(z)]_- \|_{n+1} & \le
    5
    (\|\ba(z)\|_{n+2} + \|\ba(z)\|_{n-2})
    (\|\bb(z)\|_{n+2} + \|\bb(z)\|_{n-2}) 
  \end{align*} 
  where $[\cdots]_+$ and $[\cdots]_-$ denote respectively the
  non-negative and strictly negative truncation of a power series in
  $z$ and $z^{-1}$.  In particular, if $\ba(z)\in
  \C\{\!\{z^{-1}\}\!\}$ and $\bb(z) \in \C\{\!\{z\}\!\}$, then:
  \begin{align}
    \label{eq:20}  
    \begin{split} 
      \|[\ba(z) \bb(z)]_+\|_{n-1} & \le 
      20 \|\ba(z)\|_{n-2} \|\bb(z)\|_{n+2} 
      \\
      \|[\ba(z) \bb(z)]_- \|_{n+1} & \le
      20 \|\ba(z)\|_{n-2} \|\bb(z)\|_{n+2}.  
    \end{split} 
  \end{align} 
\end{lemma}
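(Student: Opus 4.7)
The plan is to estimate the Laurent coefficients of the product $\ba(z)\bb(z)$ by a weighted Cauchy--Schwarz argument, reducing the problem to a kernel bound for ratios of $\Gamma$-functions. Writing $\ba(z)=\sum_i a_i z^i$, $\bb(z)=\sum_j b_j z^j$, the $k$-th coefficient of the product is the convolution $c_k=\sum_{i+j=k}a_ib_j$, and the norm $\|\cdot\|_n$ is the $\ell^2$-norm of the sequence $a_j/|\Gamma(\tfrac12+j)|$ with weight $e^{nj}$. The first step is to split the convolution sum $c_k$ according to the signs of $i$ and $j$. For $k\ge 0$ the case $i<0,\,j<0$ is impossible because $i+j<0$, leaving three cases ($++$, $+-$, $-+$); symmetrically, for $k<0$ only three cases survive.

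In each case I apply Cauchy--Schwarz in the form $|c_k^{\epsilon}|^2\le\bigl(\sum_{(i,j)\in I_\epsilon(k)}K_\epsilon(i,j)^2\bigr)\bigl(\sum_{(i,j)\in I_\epsilon(k)}\hat a_i^2 \hat b_j^2\bigr)$, where $\hat a_i=|a_i|e^{\alpha_i}/|\Gamma(\tfrac12+i)|$ with $\alpha_i=(n+2)i$ when $i\ge 0$ and $\alpha_i=(n-2)i$ when $i<0$, and similarly for $\hat b_j$. The kernel is $K_\epsilon(i,j)=\bigl(|\Gamma(\tfrac12+i)||\Gamma(\tfrac12+j)|/|\Gamma(\tfrac12+k)|\bigr)\,e^{(n-1)k-\alpha_i-\alpha_j}$. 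With these weight choices the sum $\sum_i \hat a_i^2$ is bounded by $\|\ba\|_{n+2}^2+\|\ba\|_{n-2}^2$, and the exponent $(n-1)k-\alpha_i-\alpha_j$ collapses to an expression in $|i|$ and $|j|$ alone (the dependence on $n$ telescopes), producing factors of the form $e^{-3k}$ and $e^{-4m}$ that are available for estimating the kernel.

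The central step is a uniform-in-$k$ bound $\sup_k\sum_{(i,j)\in I_\epsilon(k)}K_\epsilon(i,j)^2<\infty$. The tools are Stirling's formula and the reflection identity $|\Gamma(\tfrac12-m)|=\pi/|\Gamma(\tfrac12+m)|$ for integer $m\ge 0$, which converts negative-index $\Gamma$-values into positive-index ones. In the $++$ case one writes $\Gamma(\tfrac12+i)\Gamma(\tfrac12+k-i)/\Gamma(\tfrac12+k)$ as a multiple of the Beta value $B(\tfrac12+i,\tfrac12+k-i)$, which is $O(1/\sqrt k)$ uniformly in $i\in\{0,\dots,k\}$, giving $\sum_i K(i,k-i)^2\le Me^{-6k}(k+1)$. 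In the $\pm\mp$ cases, substituting the negative index $i=-m$ turns the $\Gamma$-ratio into a binomial-like quantity growing at most like $2^{k+m}\sqrt{k+m}$, which is dominated by the $e^{-4m-3k}$ factor. Summing the three contributions and estimating the missing $\|\ba\|_{n-2}\|\bb\|_{n-2}$ term trivially gives the symmetric upper bound $(\|\ba\|_{n+2}+\|\ba\|_{n-2})(\|\bb\|_{n+2}+\|\bb\|_{n-2})$; the constant $5$ is read off from the three Cauchy--Schwarz contributions. The statement for $\|[\ba\bb]_-\|_{n+1}$ is symmetric, obtained by exchanging the roles of $n+2$ and $n-2$. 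For the sharper bound \eqref{eq:20}, only the $-+$ case occurs since $\ba$ has no positive powers and $\bb$ has no negative ones, which is what produces a single product $\|\ba\|_{n-2}\|\bb\|_{n+2}$ with a definite constant $20$.

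The main obstacle will be the uniform kernel bound: one must check that the decay $e^{-3k}$ and $e^{-4m}$ genuinely dominates the $\Gamma$-ratios in all sign regimes, and that no extra polynomial factors in $k$ or $m$ prevent the supremum from being finite. This is a careful but routine Stirling calculation; the choice of shift $\pm 2$ in the norms on the right-hand side is engineered precisely so that the available exponential decay is strictly larger than what is needed to tame the $\Gamma$-ratios.
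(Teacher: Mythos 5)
Your proposal is correct in spirit, but takes a noticeably more laborious route than the paper. Both arguments share the same essential structure: write $[\ba\bb]_\pm$ via the convolution, apply Cauchy--Schwarz with a $\Gamma$-ratio kernel, and absorb the kernel's growth using the $\pm 2$ shift in the norms. The paper's key observation, which short-circuits all the case analysis you propose, is the single uniform inequality
\[
\left|\frac{\Gamma(j+\tfrac12)\Gamma(k+\tfrac12)}{\Gamma(j+k+\tfrac12)}\right| \le \pi\, e^{2|j|+2|k|} \qquad\text{for all } j,k\in\Z.
\]
With this in hand, one bounds $|c_l/\Gamma(l+\tfrac12)|$ pointwise in $l$ by $\pi e^{-nl}(\|\ba\|_{n+2}+\|\ba\|_{n-2})(\|\bb\|_{n+2}+\|\bb\|_{n-2})$ with a single application of Cauchy--Schwarz (using exactly your choice of weights $e^{nj+2|j|}$), and then the geometric sum $\sum_{l\ge 0}e^{-2l}$ produces the constant. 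Your proposal splits the convolution by sign of $(i,j)$ into three cases, applies Cauchy--Schwarz with a kernel in each, and proves uniform-in-$k$ kernel bounds via Stirling, reflection, and Beta-function estimates. This works (I checked the exponent bookkeeping: e.g.\ in the $-+$ case with $i=-m$, $j=k+m$ the exponent is $-3j-m = -3k-4m$, and the Beta reciprocal grows like $2^{m+k}\sqrt{m+k}$, which is indeed dominated since $2 < e^{3}$), but it buys you nothing over the paper's one-line uniform bound, and the constant tracking becomes harder to make rigorous.

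One small inaccuracy: for the sharper bound \eqref{eq:20} you write that ``only the $-+$ case occurs ... which is what produces ... a definite constant $20$.'' That is not the right source of the $20$. If only a single case survives, Cauchy--Schwarz gives a constant no larger than the one from the main estimate (at most $5$). The factor $20 = 5\cdot 4$ in \eqref{eq:20} comes simply from applying the main inequality and then using the monotonicity of the norms on $\C\{\!\{z^{-1}\}\!\}$ and $\C\{\!\{z\}\!\}$: since $\|\ba\|_{n+2}\le\|\ba\|_{n-2}$ for $\ba\in\C\{\!\{z^{-1}\}\!\}$ and $\|\bb\|_{n-2}\le\|\bb\|_{n+2}$ for $\bb\in\C\{\!\{z\}\!\}$, the product $(\|\ba\|_{n+2}+\|\ba\|_{n-2})(\|\bb\|_{n+2}+\|\bb\|_{n-2})$ is bounded by $4\|\ba\|_{n-2}\|\bb\|_{n+2}$. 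That is the clean derivation of \eqref{eq:20}, and it requires no re-examination of which convolution cases occur.
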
 
\begin{proof} 
  Observe first that:
  \[
  \left |
    \frac{\Gamma(j+\frac{1}{2})\Gamma(k+\frac{1}{2})}
    {\Gamma(j+k+\frac{1}{2})} \right|
  \le \pi e^{2|j|+2|k|} 
  \]
  for all $j,k\in \Z$.  Setting $\bc(z)= \sum_{l\in \Z} c_l z^l =
  \ba(z) \bb(z)$, we have:
  \begin{align*} 
    \left | \frac{c_l}{\Gamma(l+\frac{1}{2})} \right| 
    & \le  \sum_{j+k=l} \frac{|a_j b_k|}{|\Gamma(l + \frac{1}{2})|} 
    \le  \sum_{j+k=l} \frac{|a_j|}{|\Gamma(j+ \frac{1}{2})|} 
    \frac{|b_k|}{|\Gamma(k+\frac{1}{2})|} 
    \left| 
      \frac{\Gamma(j+\frac{1}{2})\Gamma(k+\frac{1}{2})}
      {\Gamma(j+k+\frac{1}{2})} \right |  \\
    & \le \pi e^{-nl} 
    \sum_{j+k=l} \frac{|a_j|}{|\Gamma(j+ \frac{1}{2})|} 
    e^{nj + 2|j|} 
    \frac{|b_k|}{|\Gamma(k+\frac{1}{2})|} e^{nk + 2 |k|} \\ 
    & \le \pi e^{-nl} (\|\ba(z)\|_{n+2} + \|\ba(z)\|_{n-2})
    (\|\bb(z)\|_{n+2} + \|\bb(z)\|_{n-2}) 
  \end{align*}   
  where we used the Cauchy--Schwarz inequality in the last step. The
  conclusion follows.
\end{proof} 

\begin{remark} 
  Let $\tau$ be the co-ordinate Laplace-dual to $z^{-1}$ and let $(j_*
  \cO_{\C_\tau})_\infty$ denote the space of germs of holomorphic
  functions $f$ defined on a small punctured neighbourhood:
  \[
  \{\tau \in \C : R_f<|\tau|<\infty\}
  \]
  of $\tau=\infty$. Here $j\colon \C_\tau \hookrightarrow
  \Proj^1_\tau$ is the natural inclusion.  A calculation similar to
  that in the proof of Lemma \ref{lem:product-estimate} shows that the
  ring $\C\{\!\{z,z^{-1}\}\!\}$ acts on $(j_* \cO_{\C_\tau})_\infty$
  as microdifferential operators:
  \begin{align*}
    f(\tau) \mapsto \tau^{1/2}\ba(\partial_\tau^{-1}) \tau^{-1/2}f(\tau), 
    && \text{ for $\ba(z) \in \C\{\!\{z,z^{-1}\}\!\}$} 
  \end{align*}
  Here $\tau^{-1/2}$ was put to make the action well-defined.  The
  positive part $\C\{\!\{z\}\!\}$ preserves the space of entire
  functions $\cO(\C_\tau) \subset (j_* \cO_{\C_\tau})_\infty$ and the
  negative part $\C\{\!\{z^{-1}\}\!\}$ preserves the space of germs of
  holomorphic functions $(\cO_{\Proj^1_\tau})_\infty \subset
  (j_*\cO_{\C_\tau})_\infty$ at $\tau=\infty$.
\end{remark} 

\begin{definition}[cf.~\cite{Givental:symplectic}]
  We now define a nuclear version of \emph{Givental's symplectic
    space}.  This is a vector space:
  \[
  \cH = H_X \otimes \C\{\!\{z,z^{-1}\}\!\}
  \]
  equipped with Givental's symplectic form:
  \begin{align*}
    \Omega\colon 
    \cH  \times \cH & \longrightarrow  \C \\
    (\bbf(z), \bg(z)) & \longmapsto 
    \Res_{z=0} \langle \bbf(-z), \bg(z)\rangle_{H_X} dz 
  \end{align*}
  It has the standard polarization $\cH = \cH_+ \oplus \cH_-$,
  where\footnote{$\cH_+$ here coincides with the previous formula
    \eqref{eq:H+}.}:
  \begin{align*}
    \cH_+ := H_X \otimes \C\{\!\{z\}\!\} &&
    \cH_- := H_X \otimes z^{-1} \C\{\!\{z^{-1}\}\!\}.  
  \end{align*}
  The symplectic form $\Omega$ identifies $\cH_-$ with the strong dual
  of $\cH_+$, and identifies $\cH_+$ with the strong dual of $\cH_-$.
  The spaces $\cH$, $\cH_\pm$ are fully nuclear; $\cH_+$ is
  Fr\'{e}chet and $\cH_-$ is (DF).
\end{definition} 

\begin{lemma} 
  \label{lem:ML-estimate} 
  Assume that the genus-zero non-descendant potential $F^0_X$
  converges as in \S\ref{sec:convergence}.  Let $L(t,z)$ be the
  fundamental solution \eqref{eq:fundamentalsolution}, and let
  $M(t,z)$ be the inverse fundamental solution
  \eqref{eq:inversefundsol}.  Then there exist $\epsilon>0$, $n\ge 0$,
  and $R = R(\alpha_1,\dots,\alpha_l,j_1,\dots,j_m)>0$ such that for
  $|t^\alpha|<\epsilon$, $|Q_i|<\epsilon$ the matrix entries of:
  \begin{align*}
    \parfrac{^{l+m}L(t,z)}{t^{\alpha_1} \cdots \partial t^{\alpha_l} 
      \partial Q_{j_1} \cdots \partial Q_{j_m}}
    && \text{and} && 
    \parfrac{^{l+m}M(t,z)}{t^{\alpha_1} \cdots \partial t^{\alpha_l} 
      \partial Q_{j_1} \cdots \partial Q_{j_m}} 
  \end{align*}
  lie in the bounded subset:
  \[
  \{ \ba(z) \in \C\{\!\{z^{-1}\}\!\} : \|\ba(z)\|_n < R\}.  
  \] 
\end{lemma}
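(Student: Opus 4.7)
The plan is to establish pointwise bounds of the form $\|L_k(t,Q)\|\le C^k/k!$ on the Taylor coefficients of the formal expansion $L(t,z)=\sum_{k\ge 0}L_k(t,Q)z^{-k}$, uniformly on a small polydisc $|t^\alpha|,|Q_j|<\epsilon$, and then to check that such bounds imply the required Hilbert-norm estimate in $\C\{\!\{z^{-1}\}\!\}$ via Stirling asymptotics. Morally this reflects the fact that $z=\infty$ is a regular singular point of the extended Dubrovin connection, so one expects the $z^{-1}$-expansion of the fundamental solution to converge.

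The recursion I would exploit comes from the flatness equation $\nabla_{z\partial_z}(Lz^{-\mu}z^{-\rho}v)=0$. Using $[\mu,\rho]=\rho$ (so that $z^{-\mu}\rho=z^{-1}\rho z^{-\mu}$), this rewrites as
\[
z\partial_z L+[\mu,L]+z^{-1}\bigl((E\ast)L-L\rho\bigr)=0,
\]
whose expansion in $z^{-1}$ yields $(k-\operatorname{ad}_\mu)L_k=(E\ast)L_{k-1}-L_{k-1}\rho$ for $k\ge 1$, with $L_0=\Id$. The eigenvalues of $\operatorname{ad}_\mu$ on $\End(H_X)$ are differences $\mu_i-\mu_j$, bounded in absolute value by $\dim_\C X$, so for $k>\dim_\C X$ the operator $k-\operatorname{ad}_\mu$ is invertible with norm $O(1/k)$. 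Genus-Zero Convergence \eqref{eq:genuszeroconvergence} ensures that $E\ast$ and $\rho\cup$ are uniformly bounded operators on a polydisc $|t^\alpha|,|Q_j|<\epsilon$. Iterating the recursion then gives $\|L_k\|\le C^k/k!$ for all large $k$ uniformly on the polydisc; the finitely many low-$k$ coefficients are bounded directly from \eqref{eq:fundamentalsolution}, each being a convergent series in $(t,Q)$. Derivatives $\partial^{l+m}L_k$ satisfy the analogous recursion with finitely many extra source terms from differentiating $(E\ast)$ and $\rho\cup$, giving a bound of shape $\|\partial^{l+m}L_k\|\le C(l,m)^k k^{l+m}/k!$ by the same iteration.

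The Hilbert-norm bound is then immediate. Since $|\Gamma(\tfrac12-k)|\sim \pi\sqrt{ek}/k!$ by Stirling,
\[
\|L(t,z)v\|_n^2=\sum_{k\ge 0}\frac{\|L_k v\|^2}{|\Gamma(\tfrac12-k)|^2}e^{-2nk}\;\lesssim\;\|v\|^2\sum_{k\ge 0}\frac{C^{2k}}{k}e^{-2nk}<\infty
\]
whenever $e^n>C$. For the inverse fundamental solution $M=L^{-1}$, the identity $LM=\Id$ yields the recursion $M_k=-\sum_{j=1}^k L_j M_{k-j}$, and a standard majorant argument propagates the factorial decay $\|L_k\|\le C^k/k!$ to $\|M_k\|\le C'^k/k!$; the Hilbert-norm bound for $M$ follows exactly as for $L$. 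Finally one chooses a single exponent $n$ large enough to absorb all the constants $C(l,m)$ arising for the specific derivative index $(\alpha_1,\dots,\alpha_l,j_1,\dots,j_m)$ under consideration, which gives the lemma.

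The main obstacle is the small-denominator issue at resonant values of $k$, where $k$ coincides with an integer eigenvalue difference of $\mu$ and $k-\operatorname{ad}_\mu$ fails to be invertible. Since this happens only for finitely many small $k$ and since $L_k$ is unambiguously defined by \eqref{eq:fundamentalsolution}, this obstruction is purely cosmetic: one bounds these finitely many $L_k$ directly from Genus-Zero Convergence, which controls the convergence of each constituent correlator in $(t,Q)$ uniformly on the polydisc.
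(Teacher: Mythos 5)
Your approach is a genuinely different and more self-contained route than the paper's, which simply cites Iritani's \cite{Iritani:convergence}*{Lemma~4.1} for the explicit coefficient estimate $\|M_{\bm,i}\|\le A C^{|\bm|+i}/i!$ and deduces the bound on $L$ by the same argument. Re-deriving the factorial decay from the $z\partial_z$-flatness of the Dubrovin connection is a nice idea and the skeleton is sound, but there are two concrete problems.

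First, the base case of your recursion has a gap. You bound the finitely many low-order or resonant coefficients $L_k$ ``directly from Genus-Zero Convergence, which controls the convergence of each constituent correlator.'' But Genus-Zero Convergence \eqref{eq:genuszeroconvergence} as formulated in the paper is a statement about the primary potential $F^0_X$ only, whereas $L_k$ is built from genus-zero descendant invariants $\corr{v\psi^{k-1},t,\ldots,t,\phi^\epsilon}_{0,n+2,d}$. Convergence of those does follow, but only after an argument: for instance one can integrate the $t$-flatness $\parfrac{L_k}{t^i}=\frac{1}{z}(\phi_i\ast)L_{k-1}$ (equivalently use the topological recursion relations/Dijkgraaf--Witten reconstruction) together with the initial condition $L_k(0,0)=0$ for $k\ge1$ to establish analyticity of each $L_k$ on a polydisc. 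Your proof invokes only the $z\partial_z$-equation, so the resonant $L_k$ are left underdetermined and unbounded a priori.

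Second, the passage to $M$ is wrong as written. The ``standard majorant argument'' does \emph{not} propagate factorial decay through power-series inversion. Take $L(z)=1+z^{-1}$: the coefficients satisfy $|L_k|\le C^k/k!$ trivially, yet $M(z)=L(z)^{-1}=\sum_k(-1)^kz^{-k}$ has $|M_k|=1$, with no decay. More generally, the majorant recursion $a_k=\sum_{j=1}^k (C^j/j!)\,a_{k-j}$ has generating function $1/(2-e^{Cx})$, which has a finite radius of convergence, so $a_k$ grows geometrically. Geometric decay alone is not enough for membership in $\C\{\!\{z^{-1}\}\!\}$, where the $\Gamma$-weight forces $|a_k|\lesssim e^{nk}/k!$. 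The conclusion for $M$ is nonetheless true and can be obtained immediately from the unitarity identity noted in \eqref{eq:inversefundsol}: $M(t,z)=L(t,-z)^\dagger$, so $M_k=(-1)^kL_k^\dagger$ and $\|M_k\|=\|L_k\|$; this should replace the majorant step. With these two repairs your Dubrovin-connection argument would indeed give an independent proof of the lemma.
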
 
\begin{proof} 
  Writing $q^\bm = (t^0)^{m_0} (Q_1 e^{t^1})^{m_1} \cdots (Q_r
  e^{t^r})^{m_r} (t^{r+1})^{m_{r+1}} \cdots (t^N)^{m_N}$, we can
  expand the inverse fundamental solution as:
  \[
  M(t,z) = e^{-t/z} \sum_{i = 0}^\infty \sum_{\bm} M_{\bm, i} 
  q^{\bm} z^{-i} 
  \]
  with $M_{\bm,i} \in \End(H_X)$.  It was shown\footnote{Note that
    what is denoted by $M(t,z)$ here is denoted by $L(t,-z)$ in
    \cite{Iritani:convergence}.} in
  \cite{Iritani:convergence}*{Lemma~4.1} that $M_{\bm,i}$ satisfies:
  \[
  \|M_{\bm,i} \| \le A C^{|\bm| + i}\frac{1}{i!}  
  \]
  for some $A, C>0$.  The conclusion about the partial derivatives of
  $M(t,z)$ follows from this.  The same argument as
  \cite{Iritani:convergence}*{Lemma 4.1} shows the same estimates for
  the coefficients of $L(t,z)$.  This implies the conclusion about the
  partial derivatives of $L(t,z)$.
\end{proof} 

Let $\C\{\!\{z\}\!\}_\infty^n$ be the local Banach space of the
Fr\'{e}chet space $\C\{\!\{z\}\!\}$ associated to the norm
$\|\ba(z)\|_{\infty, n} = \sup_j (|a_j| e^{nj}/j!)$, i.e.:
\[
\C\{\!\{z\}\!\}_\infty^n= \left \{ \ba(z) \in \C[\![z]\!] :
  \sup_j \left(\frac{|a_j| e^{nj}}{j!}\right) < \infty \right\} \supset \C\{\!\{z\}\!\}
\] 
Then $l_\infty^C(H_X) \cong H_X \otimes \C\{\!\{z\}\!\}_\infty^n$ 
for $C=e^n$: see \eqref{eq:l_infinityC}.  

\begin{lemma} 
\label{lem:Frechet-Banach} 
{\rm (1)} 
Let $\cF(\ba)$ be a holomorphic function defined on a
    neighbourhood of the origin of the Fr\'{e}chet space
    $\C\{\!\{z\}\!\}$.  Consider the monomial Taylor expansion:
    \begin{equation}
      \label{eq:monomialTaylorexpansion}
      \sum_{\bm = (m_0,m_1,m_2,\dots) } 
      \frac{1}{m_0! m_1! m_2! \cdots}
      \parfrac{^{|\bm|} \cF}
      {a_0^{m_0} \partial a_1^{m_1} \partial a_2^{m_2}\cdots}
      (0) a_0^{m_0} a_1^{m_1} a_2^{m_2}\cdots 
    \end{equation}
    where $\bm = (m_0,m_1,m_2,\dots) $ is a sequence of non-negative
    integers such that $m_i = 0$ for $i \gg 0$, and $|\bm| = \sum_i
    m_i$.  There exist $n\ge 0$ and $\epsilon>0$ such that the
    monomial Taylor expansion \eqref{eq:monomialTaylorexpansion}
    converges absolutely and uniformly on the $\epsilon$-ball:
    \[
    B_\epsilon (\C\{\!\{z\}\!\}_{\infty}^n) = \{\ba \in
    \C\{\!\{z\}\!\}_{\infty}^n : \|\ba\|_{\infty,n} < \epsilon\}
    \] 
    in the Banach space $\C\{\!\{z\}\!\}_{\infty}^n$ and coincides with
    $\cF(\ba)$ for $\ba \in \C\{\!\{z\}\!\}\cap
    B_\epsilon(\C\{\!\{z\}\!\}_{\infty}^n)$. 

{\rm (2)} If $\cF(\ba)$ is uniformly continuous with respect to the norm
    $\|\cdot\|_{\infty, p}$ and holomorphic on the ball $\{\ba(z) \in
    \C\{\!\{z\}\!\}:\|\ba (z) \|_{\infty, p} < \rho\}$, then the above
    monomial Taylor expansion \eqref{eq:monomialTaylorexpansion}
    converges absolutely and uniformly on the ball $B_{\rho/3}
    (\C\{\!\{z\}\!\}_{\infty}^{p+2})$.
\end{lemma}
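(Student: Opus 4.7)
The plan is to reduce both statements to an application of Cauchy's inequality on a product polydisc. For Part~(1), the holomorphy of $\cF$ on a neighbourhood of the origin in the Fr\'echet space $\C\{\!\{z\}\!\}$ implies continuity, so there exist $m \geq 0$, $r>0$, and $M>0$ with $|\cF(\ba)| \leq M$ whenever $\|\ba\|_m \leq r$. Stirling's formula gives $|\Gamma(j+\tfrac12)|^{-2} \leq C^{2}(j+1)/(j!)^{2}$ for some $C>0$, so for every $\ba$ in the product polydisc $\prod_{j}\{|a_{j}| \leq \epsilon_{0}\, j!/e^{(m+1)j}\}$ one has
\[
\|\ba\|_m^{2} \;\leq\; C^{2}\epsilon_{0}^{2}\sum_{j\geq 0}(j+1)\,e^{-2j},
\]
which is $<r^{2}$ once $\epsilon_{0}$ is chosen small enough. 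Hence the whole polydisc lies in the domain of $\cF$. Applying Cauchy's inequality to $\cF$ restricted to each finite-dimensional sub-polydisc $\{(a_{0},\ldots,a_{N}) \in \C^{N+1}\}$ (on which $\cF$ is a bona fide holomorphic function) and letting $N\to\infty$ yields the coefficient bound
\[
\frac{1}{\bm!}\bigl|\partial^{\bm}\cF(0)\bigr| \;\leq\; M\prod_{j\geq 0}\bigl(\epsilon_{0}\, j!/e^{(m+1)j}\bigr)^{-m_{j}}
\]
for every multi-index $\bm = (m_{0},m_{1},\ldots)$.

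Next I would take $n = m+2$ and $\epsilon < \epsilon_{0}$. For $\ba \in B_{\epsilon}(\C\{\!\{z\}\!\}_{\infty}^{n})$ each factor satisfies $|a_{j}| e^{(m+1)j}/(\epsilon_{0}\, j!) \leq (\epsilon/\epsilon_{0})\,e^{-j}<1$, and summing termwise gives
\[
\sum_{\bm}\frac{|\partial^{\bm}\cF(0)|}{\bm!}\prod_{j}|a_{j}|^{m_{j}} \;\leq\; M\prod_{j\geq 0}\bigl(1-(\epsilon/\epsilon_{0})\,e^{-j}\bigr)^{-1},
\]
which is finite, and uniformly bounded on the ball, because $\sum_{j} e^{-j}$ converges; this establishes absolute uniform convergence. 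To identify the sum with $\cF(\ba)$ for $\ba \in \C\{\!\{z\}\!\} \cap B_{\epsilon}(\C\{\!\{z\}\!\}_{\infty}^{n})$, I would approximate $\ba$ by its polynomial truncations $\ba_{N} = \sum_{j\leq N}a_{j}z^{j}$: because $\ba \in \C\{\!\{z\}\!\}$, the tails of the series defining each seminorm $\|\ba\|_{k}$ tend to zero, so $\ba_{N} \to \ba$ in the Fr\'echet topology and hence $\cF(\ba_{N}) \to \cF(\ba)$; similarly $|a_{j}|e^{nj}/j! \to 0$ forces $\ba_{N} \to \ba$ in $\|\cdot\|_{\infty,n}$ as well. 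Finite-dimensional Taylor's theorem identifies the monomial series at $\ba_{N}$ with $\cF(\ba_{N})$, and the uniform convergence of the series on $B_{\epsilon}$ lets us pass to the limit.

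For Part~(2), the ball $\{\|\ba\|_{\infty,p}<\rho\}$ is itself a product polydisc $\prod_{j}\{|a_{j}|<\rho\, j!/e^{pj}\}$, and uniform continuity gives a finite sup bound $M$ on it. Cauchy's inequality, again via finite-dimensional sub-polydiscs, produces $\tfrac{1}{\bm!}|\partial^{\bm}\cF(0)| \leq M\prod_{j}(\rho\, j!/e^{pj})^{-m_{j}}$, and for $\ba \in B_{\rho/3}(\C\{\!\{z\}\!\}_{\infty}^{p+2})$ the bound $|a_{j}|e^{pj}/(\rho\, j!)<\tfrac{1}{3}e^{-2j}$ yields
\[
\sum_{\bm}\frac{|\partial^{\bm}\cF(0)|}{\bm!}\prod_{j}|a_{j}|^{m_{j}} \;\leq\; M\prod_{j\geq 0}\bigl(1-\tfrac{1}{3}e^{-2j}\bigr)^{-1}<\infty,
\]
which is uniform on the ball and absolutely summable. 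The main obstacle is the norm comparison in Part~(1): the natural continuity bound lives on a Hilbert seminorm ball, whereas Cauchy's inequality demands a bound on a product polydisc, the natural shape of a $\|\cdot\|_{\infty,n}$-ball rather than a Hilbert ball. Stirling's asymptotics bridges this gap, and the small loss in exponent (taking $n = m+2$ instead of $n = m$) reflects precisely this geometric change.
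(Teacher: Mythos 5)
Your proof is correct and follows the same overall strategy as the paper's: sandwich the domain between an infinite-dimensional product polydisc and the given norm ball, extract coefficient bounds from Cauchy's inequality on finite-dimensional slices, and sum the bounds using the convergence of the infinite product. Two points where your route differs are worth noting. In Part~(1) you make the Hilbert-to-sup norm comparison explicit via the Stirling estimate $|\Gamma(j+\tfrac12)|^{-2}\lesssim (j+1)/(j!)^2$; the paper simply asserts that the domain of holomorphy contains a $\|\cdot\|_{\infty,n}$-ball, which rests on exactly this kind of estimate, so your argument is more self-contained on this point. In Part~(2) you observe that uniform continuity on a bounded convex set forces boundedness by a standard chain argument; this is simpler than the paper's argument, which first extends $\cF$ by uniform continuity to a Banach ball and then derives boundedness by contradiction from the compactness of the inclusion $\C\{\!\{z\}\!\}_\infty^{p+1}\hookrightarrow\C\{\!\{z\}\!\}_\infty^p$. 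Your version avoids the compactness discussion entirely, which is an improvement. One small imprecision: the open ball $\{\|\ba\|_{\infty,p}<\rho\}$ is strictly contained in (not equal to) the polydisc $\prod_j\{|a_j|<\rho\,j!/e^{pj}\}$, since the sup of the quantities $|a_j|e^{pj}/j!$ can equal $\rho$ even if each term is strictly below it; this does not affect the argument, because Cauchy's inequality should be applied to tori of slightly smaller radii and the conclusion obtained in the limit, but the bound $M$ lives on the ball rather than on the polydisc, and you should phrase it that way.
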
 
\begin{proof} 
  Let us write the monomial Taylor expansion
  \eqref{eq:monomialTaylorexpansion} as:
  \[
  \sum_{\bm} \frac{1}{\bm !} \cF^{(\bm)}(0) \ba^{\bm}.
  \]
  There exist $\eta>0$ and $n\ge 0$ such that $\cF$ is holomorphic
  on:
  \[
  \{\ba\in \C\{\!\{z\}\!\} : \|\ba\|_{\infty,n}<2\eta\}
  \]
  Decreasing $\eta$ and increasing $n$ if necessary, we can assume
  that $\cF$ is bounded on: 
  \[
  \{\ba \in\C\{\!\{z\}\!\} : \|\ba\|_{\infty,n} \le \eta\}
  \]
  since $\cF$ is continuous.  Set $R_j = j! e^{-n j} \eta $.  By
  the Cauchy integral formula applied to $\cF(a_0,\dots,a_l,
  0,0,\dots)$, we obtain for $\bm = (m_0,\dots,m_l,0,0,\dots)$:
  \begin{align*} 
    \left | \frac{1}{\bm!} \cF^{(\bm)}(0) \right| 
    & = 
    \left | 
      \frac{1}{(2\pi\iu)^{l+1}}
      \int_{\text{$|a_j|= R_j$, $0 \leq j \leq l$}} 
      \frac{\cF(a_0,\dots,a_l,0,\dots)}{a_0^{m_0+1} 
        \cdots a_l^{m_l + 1}} da_0 \cdots da_l  \right| \\ 
    & \le  \frac{M e^{n \sum_{j} j m_j}}{\eta^{|\bm|} \prod_i (i!)^{m_i}} 
  \end{align*} 
  where $M$ is the supremum of $|\cF(\ba)|$ over $\{
  \|\ba\|_{\infty,n} \le \eta\}$.  Set $\eta = e \epsilon$.  Then if
  $\|\ba\|_{\infty,n+1} < \epsilon$, we have:
  \[
  \left | \frac{1}{\bm!} \cF^{(\bm)}(0) \ba^{\bm} \right| 
  \le M e^{-\sum_i (i+1) m_i} 
  \]
  The right-hand side is absolutely convergent because 
  \[
  \sum_{\bm} e^{-\sum_i (i+1) m_i}  = \prod_{i=0}^\infty 
  \frac{1}{1- e^{-i-1}} < \infty. 
  \]
  Hence the monomial Taylor expansion
  \eqref{eq:monomialTaylorexpansion} converges absolutely and
  uniformly in the ball $B_{\epsilon}(\C\{\!\{z\}\!\}_\infty^{n+1})$.
  The Taylor series and $\cF(\ba)$ match for $\ba \in
  \C\{\!\{z\}\!\}$ with $\|\ba\|_{\infty,n+1} < \epsilon$
  as both are continuous and they match on the dense subset $\{\ba \in
  \C[z] : \|\ba\|_{\infty,n+1}< \epsilon\}$. 
 This proves Part~(1). 
 
 Part~(2) can be proved by a small modification of the above argument.
 Because $\cF$ is uniformly continuous with respect to
 $\|\cdot\|_{\infty, p}$, it extends uniquely to a $\|\cdot\|_{\infty,
   p}$-continuous function on the ball
 \[
 B= \left\{\ba(z) \in \C\{\!\{z\}\!\}_{\infty}^p 
   : \|\ba(z)\|_{\infty,p} < \rho \right\} 
 \]
 In view of the above it suffices to show, under the hypotheses of
 Part~(2), that $\cF$ is bounded on:
 \[
 B' = \left\{\ba(z) \in \C\{\!\{z\}\!\} : 
   \|\ba(z) \|_{\infty, p+1} \le e \rho/3 \right\} 
 \]
 Suppose on the contrary that $\cF$ is not bounded on $B'$. Then there
 exists a sequence $(\ba_l)_{l=1}^\infty$ in $B'$ such that
 $\lim_{l\to \infty} |\cF(\ba_l)| =\infty$.  Because
 $(\ba_l)_{l=1}^\infty$ is bounded in the norm
 $\|\cdot\|_{\infty,p+1}$, one can find a subsequence
 $(\ba_{l_n})_{n=1}^\infty$ which converges to an element in $B$ in
 the norm $\|\cdot\|_{\infty,p}$.  But $\cF$ extends to a continuous
 function on $B$, so this is a contradiction.
\end{proof} 

\begin{remark} 
  In infinite dimension there are two different Taylor expansions:
  monomial expansion as above and the expansion $\sum_{m=0}^\infty
  P_m(\ba,\dots,\ba)$ by $m$-linear forms $P_m = \frac{1}{m!} D^m_0
  \cF$.  For a holomorphic function on $\C\{\!\{z\}\!\}_{\infty}^n$,
  the monomial expansion does not necessarily converges whereas
  $\sum_{m=0}^\infty P_m(\ba,\dots,\ba)$ always does: see \cite{DMP}
  and references therein.  On the other hand, Boland--Dineen
  \cite{Boland-Dineen} showed that monomials form an absolute basis of
  the space of holomorphic functions on the open set $\{\ba \in
  \C\{\!\{z\}\!\} : \|\ba\|_{\infty,n} < \epsilon\}$ in
  $\C\{\!\{z\}\!\}$ with respect to a certain topology $\tau_\omega$.
\end{remark} 

\subsection{NF-Convergence of the Genus-Zero Descendant Potential}
\label{subsec:NF-conv-genuszero}

In this section we prove Theorem~\ref{thm:convergence-genuszero}: that
Genus-Zero Convergence \eqref{eq:genuszeroconvergence}, which is a
convergence assumption on the non-descendant genus-zero potential
$F^0_X$, implies the NF-convergence of the descendant genus-zero
potential $\cF^0_X$.  The main ingredients are the Nash--Moser inverse
function theorem and the reconstruction theorem of Dubrovin and
Dijkgraaf--Witten
\citelist{\cite{Dubrovin:2DTFT}\cite{Dijkgraaf-Witten:meanfield}},
which determines descendant genus-zero invariants from primary
genus-zero invariants.

We introduce a sequence of variables $\bp=(p_0,p_1,p_2,\dots)$ in
$H_X$ with $p_i = \sum_{\alpha=0}^N p_{i,\alpha} \phi^\alpha$, and a
generating function
\[
\bp(z) = \sum_{i=0}^\infty \sum_{\alpha=0}^N 
p_{i,\alpha} \frac{\phi^\alpha}{(-z)^{i+1}}  
\]
taking values in $z^{-1}H_X[\![z^{-1}]\!]$.  Let $M(t,z)$ denote the
inverse fundamental solution \eqref{eq:inversefundsol}.  Consider the
ancestor variable $\bx =(0,x_1,x_2,\dots)$ with $x_0=0$ and set:
\begin{equation} 
\label{eq:bq+bp}
\bq(z) + \bp(z) = M(t,z) \bx(z) 
\end{equation} 
(cf.~equation~\ref{eq:anc-dec-variables}) where $\bq(z)$ is the
non-negative part and $\bp(z)$ is the strictly negative part.  Recall
that $\bx$ and $\by$ are related by the dilaton shift
\eqref{eq:Dilaton:anc-dec}.  Because the $0$th ancestor variable $x_0
= y_0$ is now set to equal zero, the map:
\begin{equation} 
\label{eq:tbx-bq} 
(t,\bx(z) ) \mapsto \bq(z) = [M(t,z) \bx(z)]_+ 
\end{equation} 
defines an isomorphism between the formal neighbourhoods 
of $\by_{\ge 1}=t=0$ and $\bt=0$: 
\[
\Lambda[\![\bt]\!] \overset{\cong}{\longrightarrow} 
\Lambda[\![\by_{\ge 1}]\!][\![t]\!]. 
\]
(This is clear from equation
\ref{eq:expansion-changeofvar}.)\phantom{.}  Equation
\eqref{eq:bq+bp} determines $\bp$, $\bq$ as formal power series in
$\by_{\ge 1}$ and $t$.  Via the isomorphism above we can regard
$p_{i,\alpha}$ as a formal power series in $\bt$, i.e.~as an element
of $\Lambda[\![\bt]\!]$.

\begin{theorem}[Dubrovin \cite{Dubrovin:2DTFT}, Dijkgraaf--Witten
  \cite{Dijkgraaf-Witten:meanfield}]
  \label{thm:reconstruction} 
  The descendant Gromov--Witten potential $\cF_X^0$ is given by
  \[
  \cF^0_X = \frac{1}{2}\Omega(\bp(z),\bq(z)) 
  = \frac{1}{2} \sum_{i=0}^\infty \sum_{\alpha=0}^N 
  p_{i,\alpha} q_i^\alpha. 
  \]
  Here $\Omega$ is Givental's symplectic form defined in
  \eqref{eq:symplecticform}. 
\end{theorem}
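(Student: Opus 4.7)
The plan is to verify the identity as an equality of formal power series in the variables $(t, \by_{\ge 1})$ via the change of variables \eqref{eq:expansion-changeofvar} that identifies $\Lambda[\![\bt]\!] \cong \Lambda[\![\by_{\ge 1}]\!][\![t]\!]$. Since both sides lie in this ring, it suffices to show that their total $(t,\by_{\ge 1})$-differentials coincide and that they agree at the basepoint $Q_i = t = \by_{\ge 1} = 0$; at that basepoint $M(0,z)|_{Q=0} = \Id$ forces $\bx = -\phi_0 z$, $\bq = -\phi_0 z$, $\bt = 0$ and $\bp = 0$, and both sides vanish. (The extension to all $Q$ can be handled using the Divisor Equation, which constrains both sides to depend on $Q_j$ and $t^j$ only through the combinations $Q_j e^{t^j}$.)

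The crux is the identity
\begin{equation}
  \label{eq:key-reconstruction}
  \frac{\partial \cF^0_X}{\partial q_i^\alpha}(\bq) = p_{i,\alpha}
\end{equation}
under the parametrization \eqref{eq:bq+bp}. This is the differentiated form of the theorem and is essentially Givental's description \cite{Givental:symplectic} of the Lagrangian cone $\mathcal{L}_X \subset \cH$: the graph of $d\cF^0_X$ over $\cH_+$ is swept out by $\{M(t,z)\bx(z) : x_0 = 0\}$ as $(t,\bx)$ vary. I would prove \eqref{eq:key-reconstruction} by comparing the explicit expression \eqref{eq:inversefundsol} for $M(t,z)$ as a generating function for primary genus-zero correlators with the iterated application of the genus-zero topological recursion relation, which reduces any descendant correlator $\corr{\phi_\alpha \psi^i, \bt(\psi), \ldots}^X_{0,n+1,d}$ to primary correlators contracted with the inverse fundamental solution.

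Granting \eqref{eq:key-reconstruction}, I would then show that the parametrized submanifold
\[
\Sigma = \bigl\{ (\bq, \bp) \in \cH : \bq + \bp = M(t,z)\bx(z), \ x_0 = 0 \bigr\}
\]
is a Lagrangian cone in $(\cH, \Omega)$. The Lagrangian property follows from a short direct calculation on tangent vectors using (a) the $\Omega$-unitarity $\Omega(M(t,z)f_1, M(t,z)f_2) = \Omega(f_1, f_2)$, equivalent to $(L(t,-z)v, L(t,z)w) = (v,w)$ stated after \eqref{eq:inversefundsol}, and (b) the Dubrovin flatness equation $z \partial_{t^\alpha} M(t,z) = -M(t,z)(\phi_\alpha \ast_t)$. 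The cone property is immediate since $\lambda\bx$ satisfies $x_0 = 0$ whenever $\bx$ does, so the radial vector $(\bq, \bp)$ is itself tangent to $\Sigma$ at $(\bq, \bp)$. Pairing it against any other tangent vector via $\Omega$ and using that $\cH_\pm$ are Lagrangian yields the crucial symmetry $\Omega(d\bp, \bq) = \Omega(\bp, d\bq)$ along $\Sigma$. Combining with \eqref{eq:key-reconstruction} gives
\[
d\!\left(\tfrac{1}{2}\Omega(\bp, \bq)\right) = \tfrac{1}{2}\bigl(\Omega(d\bp, \bq) + \Omega(\bp, d\bq)\bigr) = \Omega(\bp, d\bq) = \sum_{i,\alpha} p_{i,\alpha}\, dq_i^\alpha = d\cF^0_X,
\]
which combined with the basepoint check yields the theorem.

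The main obstacle is establishing the key identity \eqref{eq:key-reconstruction}, which is the essential content of the Dubrovin--Dijkgraaf--Witten reconstruction theorem. Translating between the cohomological definition of $M(t,z)$ and the derivatives of the descendant potential requires a careful iterative application of the genus-zero TRR, with $M(t,z)$ encoding precisely the combinatorics of reducing arbitrary $\psi$-powers to primary insertions. By contrast, once \eqref{eq:key-reconstruction} is in hand, verifying the Lagrangian cone property of $\Sigma$ and carrying out the integration step are routine symplectic-geometric computations.
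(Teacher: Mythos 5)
Your proposal is correct in outline but takes a genuinely different route from the paper. The paper's proof is a short algebraic reduction: it observes that $\tfrac{1}{2}\Omega(\bp,\bq) = \tfrac{1}{2}\Omega\big([L(t,z)\bq(z)]_+,\,L(t,z)\bq(z)\big)$, which is literally Givental's quadratic form $W_t(\bq,\bq)/2$, and then cites \cite{Givental:quantization}*{Proposition~5.3} (proved in Appendix~2 of \cite{Coates-Givental}) for $W_t(\bq,\bq)/2 = \cF^0_X$. You instead propose to start from the differentiated identity $\partial\cF^0_X/\partial q_i^\alpha = p_{i,\alpha}$ -- which is the statement that the graph of $d\cF^0_X$ is the Lagrangian cone $\mathcal{L}_X$ swept out by $\{M(t,z)\bx : x_0 = 0\}$ -- and re-integrate it using the cone property of $\Sigma$. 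The integration step you carry out is sound (the radial vector is tangent, $\cH_\pm$ are isotropic, unitarity of $M$ plus Dubrovin flatness give the Lagrangian property, and the basepoint check closes the argument), but it is worth noticing that it can be short-circuited: after the dilaton shift, the Dilaton Equation says $\cF^0_X$ is homogeneous of degree $2$ in $\bq$, so Euler's identity gives $2\cF^0_X = \sum_{i,\alpha} q_i^\alpha\, \partial\cF^0_X/\partial q_i^\alpha = \sum_{i,\alpha} q_i^\alpha p_{i,\alpha} = \Omega(\bp,\bq)$ directly from your key identity, without needing to verify that $\Sigma$ is Lagrangian. The more substantive observation is that your key identity \eqref{eq:key-reconstruction} is not a stepping stone toward the Dubrovin--Dijkgraaf--Witten theorem so much as a logically equivalent restatement of the same result the paper cites (it is precisely the content of the Coates--Givental Lagrangian cone theorem); so your plan to derive it ``by comparing $M(t,z)$ with iterated applications of the genus-zero TRR'' is not an auxiliary lemma but is the entire proof, and you leave it as a sketch. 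The paper's approach is more economical for this reason: it isolates exactly the cited input ($W_t(\bq,\bq)/2=\cF^0_X$) and shows the quadratic form identity in a three-line manipulation using only $[M\bx]_+ = M[L\bq]_+$, unitarity, and the definition of $t(\bq)$.
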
 

\begin{proof} 
  Note that the right-hand side converges in the adic topology of
  $\Lambda[\![\bt]\!]$, because $v(q_i^\alpha) = i+1$ for $i\ge 2$.
  We use a reformulation by Givental
  \cite{Givental:quantization}*{\S5}, proven in Appendix~2 of
  \cite{Coates-Givental}.  The inverse of the co-ordinate change
  \eqref{eq:tbx-bq} is given by the fundamental solution $L(t,z)$ in
  \eqref{eq:fundamentalsolution} as:
  \begin{align*}
    &0 = [L(t,z) \bq(z)]_0 &
    &\bx(z) = [L(t,z) \bq(z)]_{\ge 1} 
  \end{align*}
  where $[\cdots]_0$ means the coefficient of $z^0$ and $[\cdots]_{\ge
    1}$ means the strictly positive truncation of a power series in
  $z$.  The first equation implicitly determines $t$ as a function
  $t(\bq)$ of $\bq$.  We have:
  \begin{align*} 
    \frac{1}{2} \Omega(\bp(z),\bq(z)) & = \frac{1}{2}
    \Omega \left( [M(t,z) \bx(z)]_- , [M(t,z) \bx(z)]_+\right) \\
    & = \frac{1}{2} \Omega\left(
      M(t,z) [L(t,z) \bq(z)]_+, \bq(z)\right)  \\
    & = \frac{1}{2} \Omega\left( 
      [L(t,z) \bq(z)]_+, L(t,z) \bq(z)\right)
  \end{align*} 
  with $t = t(\bq)$.  This coincides with $W_t(\bq,\bq)/2$ in
  \cite{Givental:quantization}*{Proposition~5.3}.
\end{proof} 

\subsection*{Proof of Theorem~\ref{thm:convergence-genuszero}}
We set:
\begin{align*}
  & \cH_{\ge 1}= \left\{ \bx(z) \in \cH_+ :x_0 = 0 \right \}
  \\
  &\Delta_\epsilon = \{ a\in \C : |a|<\epsilon\}
\end{align*}
By Lemma~\ref{lem:ML-estimate} and our convergence assumption for
$F_X^0$, there exist $n\ge 0$ and $\epsilon>0$ such that all the
matrix entries of $M(t,z)$ with $|t^\alpha|<\epsilon$,
$|Q_i|<\epsilon$ are bounded with respect to the norm $\|\cdot \|_n$.
Therefore if $\bx\in \cH_{\ge 1}$, $|t^\alpha|<\epsilon$, and
$|Q_i|<\epsilon$, $(\bp,\bq)$ defined by the equation \eqref{eq:bq+bp}
lies in $\cH_- \times \cH_+$ by Lemma~\ref{lem:product-estimate}, and
the sum $\sum_{i=0}^\infty \sum_{\alpha=0}^N p_{i,\alpha} q_i^\alpha$
converges.  Moreover the map:
\begin{align*}
  \cH_{\ge 1}\times \Delta_{\epsilon}^{N+1+r} & \to \cH_-\times \cH_+ \\
  (\bx,t,Q) & \mapsto (\bp,\bq)
\end{align*}
given by \eqref{eq:bq+bp} is continuous, because:
\begin{multline*} 
\left\|M(t,z;Q) \bx(z) - M(t',z;Q') \bx'(z)\right \|_n 
\le  A 
\left \| (t,Q)- (t',Q') \right\|  \|\bx(z) \|_{n+3} \\ 
+ B \|\bx(z) - \bx'(z)\|_{n+3} 
\end{multline*}
for $n\gg 0$ and some $A, B>0$.  This follows from the estimate
\eqref{eq:20} and the uniform estimate of the derivatives of
$M(t,z;Q)$ in Lemma~\ref{lem:ML-estimate}.  The map:
\[
(\bx,t,Q) \mapsto \frac{1}{2}\Omega\big(\bp(z),\bq(z)\big)
\]
is obviously G\^{a}teaux-differentiable, and therefore defines a
holomorphic function of $(\bx,t,Q) \in \cH_{\ge 1}\times
\Delta_{\epsilon}^{N+1+r}$ (see Remark~\ref{rem:holomorphy}).  This
gives the genus-zero descendant potential $\cF_X^0$ by
Theorem~\ref{thm:reconstruction}.

In view of Lemma~\ref{lem:Frechet-Banach}, it now suffices to show
that the map $(\bx, t, Q) \mapsto (\bq,Q)$ given by \eqref{eq:tbx-bq}
defines a local isomorphism between a neighbourhood of $(\bx, t, Q) =
(-\phi_0 z, 0, 0)$ in $\cH_{\ge 1} \times \C^{N+1+r}$ and a
neighbourhood of $(\bq,Q) = (-\phi_0 z, 0)$ in $\cH_+\times \C^r$.  We
apply the Nash--Moser inverse function theorem
\cite{Hamilton}*{Part~III,~Theorem 1.1.1}.  We need to show that there
exists a neighbourhood $U\subset \cH_{\ge 1}\times \C^{N+1+r}$ of
$(\bx,t,Q) = (- \phi_0 z, 0,0)$ such that:
\begin{itemize} 
\item the map $(\bx,t,Q) \mapsto (\bq,Q)$ is smooth tame  
  \cite{Hamilton}*{Part~II,~\S2.1} on $U$; 
\item the linearized operator $D_{\bx,t,Q}(\bq,Q)$ is invertible 
  at every $(\bx,t,Q)\in U$; 
\item the inverse $(D_{\bx,t,Q}(\bq,Q))^{-1}$ is continuous and 
  tame as a map $U\times (\cH_+\times \C^r) 
  \to \cH_{\ge 1}\times \C^{N+1+r}$ 
  (see \cite{Hamilton}*{Part~II,~Theorem~3.1.1}).  
\end{itemize} 
The proof of smoothness is similar to the proof of continuity
above and is omitted.  Because $\bq$ is linear in $\bx$, the
smooth-tameness of $(\bx,t,Q)\mapsto \bq$ follows from the inequality:
\begin{equation}
  \label{eq:inequality}
  \left \| \left[\partial_{v_1}\cdots \partial_{v_l} 
      M(t,z) \bx(z)\right ]_+\right \|_n
  \le A  \|\bx\|_{n+3}   
\end{equation}
for all $(t,Q)\in \Delta_\epsilon^{N+1+r}$, $n\gg 0$, and some $A>0$
($A$ can depend on $v_1,\dots,v_l$).  Here $\partial_{v_j}$ denotes
the partial derivative along $\Delta_\epsilon^{N+1+r}$.  The
inequality \eqref{eq:inequality} follows directly from the estimate
\eqref{eq:20} and Lemma~\ref{lem:ML-estimate}.

The linearized operator is given by:
\[
(D_{\bx,t,Q} (\bq,Q)) (d \bx, d t, dQ) = 
\left( 
  \left[M(t,z) (-z^{-1} dt * \bx + d\bx )\right]_+ 
  + \sum_{i=1}^r dQ_i
  \left[\parfrac{M}{Q_i}(t,z) \bx\right]_+, 
  dQ \right) 
\]
where $*$ denotes the analytic quantum product depending on $(t,Q)$.
Equating this with $(d\bq, dQ)$, we get:
\[
-z^{-1} dt * \bx + d\bx  
= \left[L(t,z) \left( 
d\bq - \sum_{i=1}^r dQ_i
\left[\parfrac{M}{Q_i}(t,z) \bx\right]_+\right) \right]_+
\]
The right-hand side is continuous and tame as a map from $((\bx,t,Q),
(d\bq, dQ) ) \in (\cH_{\ge 1} \times \Delta_\epsilon^{N+1+r})\times
(\cH_+ \times \C^r)$ to $\cH_+$ for the same reason as before.
Equating the left-hand side with $\bv =(v_0,\bv_{\ge 1})$ yields:
\begin{align*}
  & v_0 = - dt* x_1 &&
  \bv_{\ge 1} = z^{-1} dt* \bx_{\ge 2} + d\bx. 
  \intertext{When $x_0$ is sufficiently close to $-\phi_0$, the first equation can
    be inverted and we obtain the inverse map $((\bx,t,Q), \bv) \mapsto
    (d\bx,dt)$ given by:}
  & dt = - (x_1*)^{-1} v_0 &&
  d\bx = \bv_{\ge 1} + z^{-1} ((x_1*)^{-1} v_0)* \bx_{\ge 2}. 
\end{align*}
This map is continuous and tame. 
Hence the linearized operator admits a continuous and 
tame inverse in a neighbourhood of $(-\phi_0 z, 0,0)$. 
The Nash--Moser theorem now applies. \qed

\subsection*{An Analytic Version of Theorem~\ref{thm:reconstruction}}
We saw in \S\ref{sec:specializationmakesense} that whenever $\cF^0_X$
converges, we can define the specialization $\cF^0_{X,\rm an}$ of
$\cF^0_X$ to $Q_1 = \cdots = Q_r = 1$.  We now show that the
Dubrovin--Dijkgraaf--Witten reconstruction theorem
(Theorem~\ref{thm:reconstruction}) holds for this $\cF^0_{X,\rm an}$.
Genus Zero Convergence \eqref{eq:genuszeroconvergence} implies that
$M(t,z)|_{Q_1= \cdots =Q_r = 1}$ depends analytically on $(t,z)$ where
$z \in \Cstar$ and $t$ lies in a neighbourhood \eqref{eq:LRLnbhd} of
the large-radius limit point.  Set $\cH_{\ge 1}= \{\bx(z)\in \cH_+ :
x_0 = 0\}$. We define $\bq$ and $\bp$ by a formula analogous to
\eqref{eq:bq+bp}:
\begin{equation} 
\label{eq:bq+bp:Q=1} 
\bq(z) + \bp(z) = M(t,z)\Big|_{Q_1=\cdots=Q_r=1} \bx(z) 
\end{equation} 
where $\bx(z) \in \cH_{\ge 1}$ and:
\[
\bq(z) = \left[M(t,z)\Big|_{Q_1= \cdots =Q_r = 1} \bx(z)\right]_+
\]
Arguing as in the proof of Theorem \ref{thm:convergence-genuszero}
shows that one gets a continuous mapping $(t,\bx) \mapsto (\bp,\bq)
\in \cH_- \times \cH_+$, and that the map:
\begin{align*}
  \cH_{\ge 1}\times H_X & \to \cH_+ \\
  (\bx, t) & \mapsto \bq(z)
\end{align*}
gives a local isomorphism between $\cH_{\ge 1}\times H_X$ and $\cH_+$,
for $t$ in a neighbourhood \eqref{eq:LRLnbhd} of the large-radius
limit point and $\bx$ in a neighbourhood of $-\phi_0 z$.  Therefore,
for any chosen point $t\in H_X\otimes \C$ in a neighbourhood
\eqref{eq:LRLnbhd} of the large-radius limit point, the sum
$\frac{1}{2} \sum_{i=0}^\infty \sum_{\alpha=0}^\infty p_{i,\alpha}
q_i^\alpha$ can be regarded as a holomorphic function on a
neighbourhood of $\bq (z) = [M(t,z) (-\phi_0 z)]_+ = t-\phi_0 z$ in
$\cH_+$.

\begin{theorem}[analytic version of Theorem \ref{thm:reconstruction}] 
  Assume that the genus-zero descendant Gromov--Witten potential
  $\cF^0_X$ is NF-convergent in the sense of Definition
  \ref{def:descendant-converge}.  Then its specialization
  $\cF^0_{X,\rm an}$ to $Q_1=\cdots =Q_r=1$ (see
  \eqref{eq:analyticdescendantpot}) is given by:
  \[
  \cF_{X,\rm an}^0 = \frac{1}{2} 
  \Omega(\bp,\bq)= \frac{1}{2} 
  \sum_{i=0}^\infty \sum_{\alpha=0}^N p_{i,\alpha} q_i^\alpha 
  \]
  The right-hand side here is, as discussed in the preceding
  paragraph, a holomorphic function defined on a neighbourhood of
  $\bq(z) = t - \phi_0 z$ in $\cH_+$, where $t$ is a point in the
  neighbourhood \eqref{eq:LRLnbhd} of the large-radius limit point.
\end{theorem}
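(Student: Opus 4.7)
My plan is to transport the formal identity of Theorem~\ref{thm:reconstruction} to the analytic setting by the identity principle, using the local analytic isomorphism established in the paragraphs preceding the theorem statement. As noted there, the map $\Phi\colon \cH_{\ge 1}\times U \to \cH_+$ given by $(\bx,t)\mapsto [M(t,z)|_{Q_1=\cdots=Q_r=1}\bx(z)]_+$ is a local analytic isomorphism onto a neighbourhood of $\bq(z)=t-\phi_0 z$, where $U$ is a neighbourhood of the large-radius limit point \eqref{eq:LRLnbhd}. Pulling the analytic descendant potential back along $\Phi$ yields a holomorphic function $F(\bx,t):=\cF^0_{X,\rm an}(\Phi(\bx,t))$ on $\cH_{\ge 1}\times U$.

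Next I would show that the second function $G(\bx,t):=\tfrac{1}{2}\Omega(\bp,\bq)=\tfrac{1}{2}\sum_{i,\alpha}p_{i,\alpha}q_i^\alpha$, where $\bp+\bq = M(t,z)|_{Q_1=\cdots=Q_r=1}\bx(z)$, is holomorphic on the same domain. This follows from the product estimate of Lemma~\ref{lem:product-estimate} together with the uniform bounds on $M(t,z)|_{Q_1=\cdots=Q_r=1}$ supplied by Lemma~\ref{lem:ML-estimate}, by the very same continuity argument already carried out in the proof of Theorem~\ref{thm:convergence-genuszero}; the estimate \eqref{eq:20} guarantees that the Laurent product and pairing $\Omega(\bp,\bq)$ converge continuously in $(\bx,t)$, and G\^ateaux-differentiability is immediate since $\bp,\bq$ depend holomorphically on $(\bx,t)$.

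It then remains to match $F$ and $G$. For this I would compare their monomial Taylor expansions at a suitable base point. The formal identity of Theorem~\ref{thm:reconstruction} reads $\cF^0_X = \tfrac{1}{2}\Omega(\bp,\bq)$ in $\Lambda[\![\bt]\!]$; transported through the formal change of variables \eqref{eq:anc-dec-variables}, this becomes an equality in $\Lambda[\![\by_{\ge 1}]\!][\![t]\!]$. By the Divisor Equation (as exploited in \S\ref{sec:specializationmakesense} and Remark~\ref{rem:ancestordivisor}), both sides in fact have coefficients in $\cS[\![\by_{\ge 1}]\!]$, so the specialization $Q_1=\cdots=Q_r=1$ produces a genuine equality of formal power series in $\by_{\ge 1}$ whose coefficients are holomorphic on $U$. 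By construction, this specialized formal series is precisely the monomial Taylor expansion at $\bx = -\phi_0 z$ of both $F$ (via Definition-Proposition~\ref{def-pro:specialization}) and $G$ (by direct expansion of the bilinear pairing). Lemma~\ref{lem:Frechet-Banach} then guarantees that on a common smaller neighbourhood of $(\bx,t)=(-\phi_0 z, t_0)$, both $F$ and $G$ equal the sum of this convergent monomial Taylor series. The identity principle upgrades this to $F=G$ on the connected open set $\cH_{\ge 1}\times U$, which is the desired equality.

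The main obstacle is the compatibility step, namely verifying that the specialization $Q_1=\cdots=Q_r=1$ of the formal identity of Theorem~\ref{thm:reconstruction} reproduces the Taylor expansion of the analytic $G$ rather than just a symbolic one. This is where one must carefully combine the Divisor Equation, the Dubrovin connection identities satisfied by $M(t,z)$, and the isomorphism $\Lambda[\![\bt]\!]\cong\Lambda[\![\by_{\ge 1}]\!][\![t]\!]$ from \eqref{eq:expansion-changeofvar}; the rest of the proof is a fairly routine application of the estimates already developed in \S\ref{subsec:NF-conv-genuszero} together with Lemma~\ref{lem:Frechet-Banach} to convert formal equality of Taylor coefficients into equality of holomorphic functions.
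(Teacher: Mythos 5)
Your strategy — pull both sides back to $\cH_{\ge 1}\times U$, show each is holomorphic, match monomial Taylor expansions at a base point, and invoke Lemma~\ref{lem:Frechet-Banach} plus the identity principle — is a genuinely different route from the paper's, which proves the identity pointwise by a direct change of base using the Divisor Equation. Concretely, the paper writes $M(t-\delta,z)|_{Q=e^\delta}=e^{\delta/z}M(t,z)|_{Q=1}$, sets $(\tbp,\tbq)$ to be the Laurent data at the shifted parameter $(t-\delta,e^\delta)$ which lies inside the formal convergence domain, identifies $\tfrac{1}{2}\Omega(\tbp,\tbq)$ with $\cF^0_X(\tbq,e^\delta)$ by Theorem~\ref{thm:reconstruction}, relates $\Omega(\bp,\bq)$ to $\Omega(\tbp,\tbq)$ via the symplectic transformation $e^{\delta/z}$, and finishes with the defining property~\eqref{eq:def-cFg-an}. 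This avoids any formal-to-analytic Taylor comparison entirely.

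The gap in your proposal is exactly the step you flag as ``the main obstacle'': the claim that the specialization $Q=1$ of the formal identity is \emph{by construction} the monomial Taylor expansion of $F=\cF^0_{X,\rm an}\circ\Phi$ at a finite base point $(\bx,t)=(-\phi_0 z,t_0)$. This is not a bookkeeping matter that can be deferred. The formal identity of Theorem~\ref{thm:reconstruction} is an equality of power series expanded at $(\bt,Q)=(0,0)$, whereas $\cF^0_{X,\rm an}$ is only defined via~\eqref{eq:def-cFg-an} on the image of $[e^{-\delta/z}(-\phi_0 z + B_\epsilon)]_+$ with $\Re\delta_i<\log\epsilon$, i.e.\ near the large-radius limit and well outside the formal polydisc $|Q_j|<\epsilon$ once $\epsilon<1$. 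To relate the Taylor expansion of $F$ at $t_0$ to the specialized formal series you must re-base from $(\bt,Q)=(0,0)$ to $Q=e^\delta$, $\bt=\tbt$, which is precisely the Divisor-Equation transport of the paper's proof combined with the polarization-compatibility of the $e^{\delta/z}$-action. Your claim for $G$ is unproblematic because $G$ is given by an explicit bilinear formula in the matrix entries of $M(t,z)|_{Q=1}$, whose coefficients converge by Lemma~\ref{lem:ML-estimate}; but the corresponding claim for $F$ is the theorem, not a consequence of its setup, so as written the argument is circular. If you carry out the Divisor Equation transport needed to justify the claim for $F$, you will essentially have reproduced the paper's proof, at which point the Taylor expansion/identity-principle scaffolding becomes superfluous.
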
 

\begin{proof} 
  We write the right-hand side as:
  \[
  C^{(0)}(\bq) := \frac{1}{2} \Omega(\bp,\bq).
  \]
  The Divisor Equation shows that (cf.~equation~\ref{eq:L-divisoreq}):
  \[
  M(t-\delta,z)\Big|_{Q_1=e^{\delta_1},\dots, Q_r=e^{\delta_r}}  
  = e^{\delta/z} M(t,z) \Big|_{Q_1=\cdots=Q_r=1}    
  \]
  where $\delta = \sum_{\alpha=1}^r \delta_\alpha \phi_\alpha$.  Therefore by
  \eqref{eq:bq+bp:Q=1} for $\bx =(0,x_1,x_2,\dots)$:
  \[
  M(t-\delta,z)\bx(z) \Big|_{Q_1=e^{\delta_1},\dots, Q_r=e^{\delta_r}} 
  = e^{\delta/z} (\bp(z) + \bq(z))
  \]
  Assume now that $\Re(\delta_i)\ll 0$ and that $(t-\delta, \bx)$ is
  sufficiently close to $(0,-\phi_0 z)$.  Setting:
  \begin{align*} 
    \tbq(z) & = \left[M(t-\delta,z)\bx(z) \right]_+ 
    \Big|_{Q_1=e^{\delta_1},\dots, Q_r=e^{\delta_r}} 
    = \left[e^{\delta/z} \bq(z)\right]_+ \\
    \tbp(z) & = \left[
      M(t-\delta,z)\bx(z)\right]_- 
    \Big|_{Q_1=e^{\delta_1},\dots, Q_r=e^{\delta_r}} 
  \end{align*} 
  we have from the definition of $C^{(0)}$ and the the original
  reconstruction Theorem~\ref{thm:reconstruction} that:
  \begin{align*}
    C^{(0)}(\bq) & = \frac{1}{2} \Omega(\bp,\bq) 
= \frac{1}{2} \Omega(e^{\delta/z} \bp, e^{\delta/z} \bq)  
= \frac{1}{2} \Omega(\tbp - [e^{\delta/z} \bq]_-, \tbq + 
[e^{\delta/z} \bq]_- )  \\
& = \frac{1}{2} \Omega(\tbp,\tbq) - \frac{1}{2} 
\Omega([e^{\delta/z} \bq]_-, \tbq)  
 = \frac{1}{2} \Omega(\tbp,\tbq) 
 - \frac{1}{2} \Omega(e^{\delta/z} [e^{-\delta/z} \tbq]_+, \tbq) 
\\
&= \cF_X^0(\tbq, e^{\delta_1},\dots,e^{\delta_r}) 
+ \frac{1}{2} 
\Omega( e^{-\delta/z} \tbq, [e^{-\delta/z} \tbq]_+)
  \end{align*}
  Applying \eqref{eq:def-cFg-an}, we see that the right-hand side
  coincides with $\cF_{X,\rm an}^0(\bq)$.
\end{proof} 

\subsection{The Proof of Theorem~\ref{thm:descendant-convergence}} 
\label{subsec:proof-descendant-convergence} 

The genus-zero ancestor potential $\bar\cF^0_X$ contains as a subseries:
\[
\sum_{d \in \NE(X)} \sum_{n=0}^\infty \sum_{0\le \alpha,\beta,\gamma \le N} 
\frac{Q^d}{3! n!} 
\corr{y_0^\alpha \phi_\alpha, y_0^\beta \phi_\beta, 
y_0^\gamma \phi_\gamma; \overbrace{t,\dots,t}^{n} 
}_{0,3+n,d}^X 
\]
and our convergence assumption implies that the coefficient of
$y_0^\alpha y_0^\beta y_0^\gamma$ converges as a power series in $t$
and $Q$.  This shows that all third derivatives of the non-descendant
genus zero potential $F^0_X$ are convergent, and thus that $F^0_X$
itself is convergent.  Theorem~\ref{thm:convergence-genuszero} then
implies that the genus-zero descendant potential $\cF^0_X$ is
NF-convergent.

A similar argument shows that the genus-one non-descendant potential
$F^1_X(t)$ converges.  All derivatives of $F^1_X(t)$ appear as
subseries of $\bar\cF^1_t$ and hence are convergent. Thus $F^1_X(t)$
is also convergent.  Now the Ancestor--Descendant Relation (Theorem
\ref{thm:ancestor-descendant}) leads to the NF-convergence of
higher-genus descendant potentials.  Under our convergence
assumptions, the ancestor potential $\bar\cF^g_t(\bx)$ with $x_0$ set
to equal zero depends:
\begin{itemize}
\item analytically on $t^i$, $Q_i$ in a region \eqref{eq:e-ball}, for
  some $\epsilon>0$;
\item rationally on $x_1$; and
\item polynomially on $x_2,x_3,\dots$.  
\end{itemize}
In particular it is holomorphic in a small neighbourhood of
$(\bx,t,Q)=(-\phi_0 z ,0 ,0)$ in the Fr\'{e}chet space 
$\cH_{\ge  1}\times \C^{N+1+r}$;  
moreover, for every $n\ge 0$, it is uniformly continuous with respect to 
the norm $\|\cdot \|_n$ in a $\|\cdot \|_n$-neighbourhood 
of $(-\phi_0 z, 0, 0)$. 
On the other hand, in the proof (\S \ref{subsec:NF-conv-genuszero}) of 
Theorem \ref{thm:convergence-genuszero}, 
we used the Nash--Moser inverse function theorem 
to show that the co-ordinate change
$(\bx,t, Q)\mapsto (\bq,Q)$ defined by \eqref{eq:tbx-bq} is an
isomorphism between a neighbourhood of $(-\phi_0 z,0, 0)$ 
in $\cH_{\ge  1}\times \C^{N+1+r}$ 
and a neighbourhood of $(-\phi_0 z, 0)$ in $\cH_+ \times \C^r$.  
The Nash--Moser theorem \cite{Hamilton}*{Part~III,~Theorem 1.1.1} 
moreover 
asserts that the inverse map $(\bq,Q) \mapsto (\bx,t,Q)$ 
is smooth tame. Therefore there exist $m\ge n\ge 0$ 
such that the inverse map is defined on a $\|\cdot\|_m$-neighbourhood 
of $(-z \phi_0,0)$ and is 
$(\|\cdot\|_m, \|\cdot\|_n)$-Lipschitz continuous\footnote
{To show this, we apply \cite{Hamilton}*{Part~II,~Lemma 2.1.7} 
to the derivative of the inverse map.} 
there, i.e.\ 
there exist $C>0$ such that 
\[
\|(\bx', t',Q') - (\bx,t,Q) \|_n \le C \|(\bq',Q') - (\bq,Q)\|_m  
\]
holds for any $(\bq,Q)$ and $(\bq',Q')$ in a 
$\|\cdot\|_m$-neighbourhood of $(-z\phi_0, 0)$, 
where $(\bx',t',Q')$ and $(\bx,t,Q)$ are the inverse images of 
$(\bq',Q')$ and $(\bq,Q)$ respectively. 
By the Ancestor-Descendant relation 
(Theorem \ref{thm:ancestor-descendant}), 
the descendant potentials $\cF^g(\bq)$, $g\ge 1$ are the pull-backs 
of the ancestor potentials $\bar\cF^g_t(\bx)+ \delta_{g,1} F^1(t)$
under the inverse map $(\bq,Q) \mapsto (\bx,t,Q)$. 
Therefore, $\cF^g(\bq)$, $g\ge 1$ are holomorphic 
and uniformly continuous with respect to $\|\cdot\|_m$ 
on a common (i.e.\ independent of $g$) 
$\|\cdot\|_m$-neighbourhood of $(-z\phi_0,0)$ in 
$\cH_+ \times \C^r$. 
Because $\|\cdot\|_m \le C \|\cdot\|_{\infty,m+1}$ 
for some $C>0$, the same holds for the norm $\|\cdot\|_{\infty,m+1}$. 
By Lemma \ref{lem:Frechet-Banach} (2), the monomial Taylor 
expansions of $\cF^g(\bq)$, $g \ge 1$ 
converge uniformly and absolutely 
on a common Banach ball for the norm $\|\cdot\|_{\infty, m+3}$. 
The conclusion follows.  
\qed

\subsection{The Proof of
  Theorem~\ref{thm:convergenceimpliesFrechetconvergenceancestor}}

By Theorem \ref{thm:descendant-convergence}, the total descendant
potential $\cZ_X$ is convergent in the sense of Definition
\ref{def:descendant-converge}.  In particular the genus-zero
non-descendant potential $F^0_X$ is convergent.  In this case the
co-ordinate change \eqref{eq:anc-dec-variables} appearing in the
Ancestor--Descendant relation (Theorem \ref{thm:ancestor-descendant})
is an isomorphism between a neighbourhood of $(\bx,t,Q)=(-\phi_0
z,0,0)$ in $\cH_+ \times \C^{N+1+r}$ and a neighbourhood of
$(\bq,t,Q)=(-\phi_0 z, 0, 0)$ in $\cH_+ \times \C^{N+1+r}$.  (See
equation~\ref{eq:H+} for $\cH_+$.)  This follows from the Nash--Moser
inverse function theorem, using almost the same argument as in the
proof of Theorem~\ref{thm:convergence-genuszero}.  Therefore the
genus-$g$ ancestor potentials $\bar\cF^g_t(\bx)$ for all $g\ge 1$ are
holomorphic on a common neighbourhood of $(\bx,t,Q) = (-\phi_0 z,
0,0)$, and thus are NF-convergent by Lemma \ref{lem:Frechet-Banach}.
(We will prove in the next paragraph that the constants $C$,
$\epsilon$ defining the radius of NF-convergence here can be taken to
be independent of $g$.)\phantom{.}  At genus zero, the
Ancestor--Descendant relation takes the form
\cite{Givental:quantization}*{Proposition~5.3}:
\[
\cF^0(\bq) = \bar{\cF}^0_t(\bx) + \frac{1}{2} W_t(\bq,\bq) 
\]
where the quadratic form $W_t(\bq,\bq)$ appeared in the proof of
Theorem \ref{thm:reconstruction}; it is convergent and holomorphic if
$\bq$ lies in $\cH_+$ and $|t^\alpha|$, $|Q_i|$ are sufficiently small
(cf.~the proof of Theorem \ref{thm:convergence-genuszero}).  The
NF-convergence of $\bar\cF^0_t(\bx)$ follows. 

For the uniformity of the constants $C$, $\epsilon>0$, we use an
argument similar to the proof (\S
\ref{subsec:proof-descendant-convergence}) of
Theorem~\ref{thm:descendant-convergence}.  We know that $\cF_g(\bq)$
is uniformly continuous for $\|\cdot\|_n$ on a common (i.e.\
independent of $g\ge 1$) $\|\cdot\|_n$-neighbourhood of
$(\bq,t,Q)=(-z\phi_0,0,0)$ for some $n\ge 0$ and that the map
$(\bx,t,Q) \mapsto (\bq,t,Q)$ is smooth tame; thus the pull-backs of
$\cF^g(\bq)$ by $(\bx,t,Q) \mapsto (\bq,t,Q)$ is uniformly continuous
for $\|\cdot\|_m$ on a common $\|\cdot\|_m$-neighbourhood of
$(\bx,t,Q) = (-\phi_0 z, 0,0)$ for some $m\ge n$.  Then we apply part
2 of Lemma~\ref{lem:Frechet-Banach} to find that the monomial Taylor
expansion of $\bar\cF^g_t(\bx)$ converges uniformly and absolutely on
a common $\|\cdot\|_{\infty, m+3}$-neighbourhood of $(-\phi_0 z, 0 ,
0)$. \qed

\section{Negative Line Bundles Over Compact Toric Varieties}
\label{sec:local}

We now prove Corollary~\ref{cor:local}.  Let $Y$ be a compact toric
variety and let $X$ be the total space of a direct sum $E =
\bigoplus_{j=1}^{j=r} E_j$ of line bundles $E_j$ over $Y$ such that
$c_1(E_j) \cdot d < 0$ whenever $d$ is the degree of a holomorphic
curve in $Y$.  In what follows we take $r=1$, leaving the proof of the
general case (which is very similar) to the reader.  Let
$\overline{X}$ denote the projectivization $\overline{X} =
\mathbb{P}(E \oplus \C)$, and let $X_\infty \subset \overline{X}$
denote the infinity section.  The inclusion $i:X \to \overline{X}$
induces a map:
\[
i_\star : H_2(X;\Z) \to H_2(\overline{X},\Z)
\]
and $e \in H_2(X,\Z)$ satisfies $e = i_\star(d)$ if and only if $e
\cdot [X_\infty] = 0$.  Thus, since $E \to X$ is negative, any non-constant
holomorphic curve in $\overline{X}$ in the class $i_\star(d)$ lies
entirely inside the zero section of $\overline{X}$.  It follows that:
\begin{align*}
  \corr{a_1 \psi_1^{i_1},\ldots,a_n \psi_n^{i_n}}^{\overline{X}}_{g,n,i_\star(d)}
  =
  \corr{i^\star(a_1) \psi_1^{i_1},\ldots, i^\star(a_n) \psi_n^{i_n}}^{X}_{g,n,d}
&&
d \ne 0
\end{align*}
where the right-hand side is defined as a local Gromov--Witten
invariant \cite{Chiang--Klemm--Yau--Zaslow}, and hence that the total
descendant potential $\cZ_X$ occurs as a subseries of the total
descendant potential $\cZ_{\overline{X}}$.  (Note that the degree-zero
Gromov--Witten invariants of $X$ are not defined, because the relevant
moduli spaces of stable maps are not compact, and hence degree-zero
terms are omitted in the definition of $\cZ_X$.)\phantom{.}
Corollary~\ref{cor:toricflag} implies that the total descendant
potential $\cZ_{\overline{X}}$ is NF-convergent in the sense of
Definition~\ref{def:descendant-converge}, and it follows that $\cZ_X$
is NF-convergent too.  The proof of Corollary~\ref{cor:local} is complete.\qed

\begin{proposition}
  \label{pro:toricflagancestor}
  Let $X$ be a compact toric variety or a complete flag variety.  The
  total ancestor potential $\cA_X$ is convergent in the sense of
  Definition~\ref{def:ancestor-converge}, and is NF-convergent in the
  sense of Definition~\ref{def:Frechet-convergence-ancestor}.
  
\end{proposition}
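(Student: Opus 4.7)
The plan is to reduce immediately to Theorem~\ref{thm:maintheoremancestors}, which asserts that whenever a smooth projective variety satisfies Formal Semisimplicity~\eqref{eq:formalsemisimplicity}, Genus-Zero Convergence~\eqref{eq:genuszeroconvergence}, and Analytic Semisimplicity~\eqref{eq:analyticsemisimplicity}, its total ancestor potential $\cA_X$ is simultaneously convergent in the sense of Definition~\ref{def:ancestor-converge} and NF-convergent in the sense of Definition~\ref{def:Frechet-convergence-ancestor}. Thus it suffices to verify these three conditions when $X$ is either a compact toric variety or a complete flag variety.

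This verification has already been carried out, implicitly, in the proof of Corollary~\ref{cor:toricflag}. In the compact toric case, Genus-Zero Convergence and Analytic Semisimplicity both follow from toric mirror symmetry \citelist{\cite{Givental:toric}\cite{Hori--Vafa}\cite{Iritani:convergence}}, which produces an explicit convergent, generically semisimple, analytic Frobenius manifold structure on the neighbourhood~\eqref{eq:LRLnbhd} of the large-radius limit point. The remark in \S\ref{sec:TelemanimpliesGivental} shows that Genus-Zero Convergence together with Analytic Semisimplicity implies Formal Semisimplicity, so all three hypotheses hold.

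For the case of a complete flag variety $X$, I would appeal to the mirror theorems of Givental \cite{Givental:flag} and Kim \cite{Kim}, together with the reconstruction theorems for logarithmic Frobenius manifolds \citelist{\cite{Reichelt}\cite{Iritani:convergence}*{Proposition~5.8}}, to establish Genus-Zero Convergence; and invoke Kostant's result \cite{Kostant} on the semisimplicity of the quantum cohomology of flag varieties to obtain Analytic Semisimplicity. Formal Semisimplicity again follows automatically. There is no real obstacle here: Proposition~\ref{pro:toricflagancestor} stands to Theorem~\ref{thm:maintheoremancestors} exactly as Corollary~\ref{cor:toricflag} stands to Theorem~\ref{thm:maintheorem}, and the required verifications of hypotheses are identical in the two cases.
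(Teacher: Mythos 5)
Your proof is correct and follows exactly the route the paper takes: the paper's proof of Proposition~\ref{pro:toricflagancestor} is simply ``Combine the argument in the proof of Corollary~\ref{cor:toricflag} with Theorem~\ref{thm:maintheoremancestors},'' which is precisely the reduction you carry out, including the observation that Genus-Zero Convergence plus Analytic Semisimplicity implies Formal Semisimplicity.
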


\begin{proof}
  Combine the argument in the proof of Corollary~\ref{cor:toricflag}
  with Theorem~\ref{thm:maintheoremancestors}.
\end{proof}

\begin{proposition}
  Let $Y$ be a compact toric variety and let $X$ be the total space of
  a direct sum $E = \bigoplus_{j=1}^{j=r} E_j$ of line bundles $E_j$
  over $Y$ such that $c_1(E_j) \cdot d < 0$ whenever $d$ is the degree
  of a holomorphic curve in $Y$. The total ancestor potential $\cA_X$
  is convergent in the sense of
  Definition~\ref{def:ancestor-converge}, and is NF-convergent in the
  sense of Definition~\ref{def:Frechet-convergence-ancestor}.
\end{proposition}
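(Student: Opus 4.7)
The plan is to follow exactly the compactification argument of Section~\ref{sec:local} that established Corollary~\ref{cor:local}, but now at the level of ancestor potentials, and with Proposition~\ref{pro:toricflagancestor} in place of Corollary~\ref{cor:toricflag}. Let $\overline{X} = \Proj(E \oplus \C)$ as in Section~\ref{sec:local}. Since every line bundle on a smooth complete toric variety admits a toric structure, we may equip $\overline{X}$ itself with the structure of a compact toric variety. Proposition~\ref{pro:toricflagancestor} then shows that $\cA_{\overline{X}}$ is convergent in the sense of Definition~\ref{def:ancestor-converge}, and by Theorem~\ref{thm:convergenceimpliesFrechetconvergenceancestor} it suffices to prove convergence of $\cA_X$: NF-convergence will follow automatically.

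Second, I would lift the local-global comparison of Gromov--Witten invariants used in Section~\ref{sec:local} to the ancestor setting. The forgetful-stabilization map $p_m\colon \overline{X}_{g,m+n,i_\star d} \to \cMbar_{g,m}$ restricts to the corresponding map on the moduli of stable maps into the zero section, and the negativity hypothesis on $E$ forces every non-constant stable map of class $i_\star d$ to lie in that zero section. The argument of Section~\ref{sec:local} therefore yields the ancestor analogue
\begin{multline*}
  \corr{a_1 \bar\psi_1^{i_1},\ldots,a_m \bar\psi_m^{i_m} : b_1,\ldots,b_n}^{\overline{X}}_{g,m+n,i_\star d} \\
  = \corr{i^\star a_1\, \bar\psi_1^{i_1},\ldots,i^\star a_m\, \bar\psi_m^{i_m} : i^\star b_1,\ldots,i^\star b_n}^{X}_{g,m+n,d}
\end{multline*}
for every $d \ne 0$, with the right-hand side the local ancestor invariant. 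Consequently $\cA_X$ is obtained from $\cA_{\overline{X}}$ by setting to zero the insertion variables $y_j^\beta$ and the cohomology parameter $t^\beta$ dual to classes in $\ker i^\star$, by setting to zero the Novikov variable indexing the fiber curve class, and by discarding the undefined degree-zero contributions.

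The main obstacle will be to check that this specialisation preserves convergence in the sense of Definition~\ref{def:ancestor-converge}. Tameness \eqref{eq:tameness} and polydisc convergence \eqref{eq:e-ball} for the coefficient functions are preserved termwise by such a substitution, so the specialisation is a well-defined element of the ancestor Fock space $\Fock(H_X \otimes \cS_\epsilon, \phi_0)$ for some possibly smaller $\epsilon > 0$. Rationality also transfers: if the derivatives of $\cF^g_{\overline{X}}$ at $\by(z) = y_1 z$ have the form $f_{I,\alpha}(q_1)/P(q_1)^{K(I,\alpha)}$, their specialisations are of the same form, and the normalisation $P(-\phi_0) = 1$ required by the rationality definition is preserved because $\phi_0$ is not among the basis vectors whose dual coefficients have been set to zero. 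Once convergence of $\cA_X$ has been established in this way, Theorem~\ref{thm:convergenceimpliesFrechetconvergenceancestor} upgrades it to NF-convergence and the proof is complete.
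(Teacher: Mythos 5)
Your proof follows essentially the same route as the paper's own one-sentence proof: compactify $X$ to the toric variety $\overline{X}=\Proj(E\oplus\C)$, invoke Proposition~\ref{pro:toricflagancestor} to get convergence of $\cA_{\overline{X}}$, and transfer to $\cA_X$ via the local--global comparison of ancestor invariants. You have also correctly observed that only convergence in the sense of Definition~\ref{def:ancestor-converge} needs to be checked, since NF-convergence then follows from Theorem~\ref{thm:convergenceimpliesFrechetconvergenceancestor}. So you are on the paper's intended path, with more detail filled in.

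The one place your write-up is slightly looser than it should be is the rationality check. You correctly note that setting the $\ker i^\star$-dual variables and the fiber Novikov variable to zero sends $f_{I,\alpha}(q_1)/P(q_1)^{K(I,\alpha)}$ to an expression of the same shape with $P(-\phi_0)=1$ preserved. But obtaining $\bar\cF^g_X$ from $\bar\cF^g_{\overline{X}}$ is not a pure specialization: you also need to \emph{subtract} the degree-zero contribution, which is not a substitution of variables but a truncation of the $Q$-expansion. Subtracting the $Q$-independent part of $f(q_1)/P(q_1)^{K}$ does still produce a function of the required rational form (one may use the discriminant $P\cdot P|_{Q=0}$, and the normalisation at $q_1=-\phi_0$ is unchanged), but as written your argument silently treats the degree-zero discard as if it were part of the specialization. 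This should be spelled out, since without it the claim that "their specialisations are of the same form" does not directly apply to $\bar\cF^g_X$ itself. To be fair, the paper's own proof is a single sentence and does not address this either; if you flag the degree-zero subtraction explicitly and note why rationality is still inherited, your argument would be complete.
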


\begin{proof}
  Argue as in the proof of Corollary~\ref{cor:local}, but use
  Proposition~\ref{pro:toricflagancestor} in place of
  Corollary~\ref{cor:toricflag}.
\end{proof}

\begin{bibdiv}
\begin{biblist}

    \bib{AGV}{article}{
      author={Abramovich, Dan},
      author={Graber, Tom},
      author={Vistoli, Angelo},
      title={Gromov-Witten theory of Deligne-Mumford stacks},
      journal={Amer. J. Math.},
      volume={130},
      date={2008},
      number={5},
      pages={1337--1398},
      issn={0002-9327},
      review={\MR{2450211 (2009k:14108)}},
      doi={10.1353/ajm.0.0017},
    }
    
    \bib{Aroca}{article}{
      author={Aroca, Fuensanta},
      title={Puiseux parametric equations of analytic sets},
      journal={Proc. Amer. Math. Soc.},
      volume={132},
      date={2004},
      number={10},
      pages={3035--3045 (electronic)},
      issn={0002-9939},
      review={\MR{2063125 (2005c:32032)}},
      doi={10.1090/S0002-9939-04-07337-X},
    }
    
   \bib{Behrend--Fantechi}{article}{
      author={Behrend, K.},
      author={Fantechi, B.},
      title={The intrinsic normal cone},
      journal={Invent. Math.},
      volume={128},
      date={1997},
      number={1},
      pages={45--88},
      issn={0020-9910},
      review={\MR{1437495 (98e:14022)}},
      doi={10.1007/s002220050136},
    }
    
    \bib{Boland-Dineen}{article}{
      author={Boland, Philip J.},
      author={Dineen, S{\'e}an},
      title={Holomorphic functions on fully nuclear spaces},
      language={English, with French summary},
      journal={Bull. Soc. Math. France},
      volume={106},
      date={1978},
      number={3},
      pages={311--336},
      issn={0037-9484},
      review={\MR{515406 (81b:46060)}},
    }

    \bib{Chae}{book}{
      author={Chae, Soo Bong},
      title={Holomorphy and calculus in normed spaces},
      series={Monographs and Textbooks in Pure and Applied Mathematics},
      volume={92},
      note={With an appendix by Angus E. Taylor},
      publisher={Marcel Dekker Inc.},
      place={New York},
      date={1985},
      pages={xii+421},
      isbn={0-8247-7231-8},
      review={\MR{788158 (86j:46044)}},
    }

    \bib{Chiang--Klemm--Yau--Zaslow}{article}{
      author={Chiang, T.-M.},
      author={Klemm, A.},
      author={Yau, S.-T.},
      author={Zaslow, E.},
      title={Local mirror symmetry: calculations and interpretations},
      journal={Adv. Theor. Math. Phys.},
      volume={3},
      date={1999},
      number={3},
      pages={495--565},
      issn={1095-0761},
      review={\MR{1797015 (2002e:14064)}},
    }
    
   \bib{CCIT:forthcoming}{article}{
      author={Coates, Tom},
      author={Corti, Alessio},
      author={Iritani, Hiroshi},
      author={Tseng, Hsian-Hua},
      note={In preparation}
    }
    
    \bib{CCLT}{article}{
      author={Coates, Tom},
     author={Corti, Alessio},
      author={Lee, Yuan-Pin},
      author={Tseng, Hsian-Hua},
      title={The quantum orbifold cohomology of weighted projective spaces},
      journal={Acta Math.},
      volume={202},
      date={2009},
      number={2},
      pages={139--193},
      issn={0001-5962},
      review={\MR{2506749 (2010f:53155)}},
      doi={10.1007/s11511-009-0035-x},
    }
    
   \bib{Coates-Givental}{article}{
   author={Coates, Tom},
   author={Givental, Alexander},
   title={Quantum Riemann-Roch, Lefschetz and Serre},
   journal={Ann. of Math. (2)},
   volume={165},
   date={2007},
   number={1},
   pages={15--53},
   issn={0003-486X},
   review={\MR{2276766 (2007k:14113)}},
   doi={10.4007/annals.2007.165.15},
}

    \bib{DMP}{article}{
      author={Defant, Andreas},
      author={Maestre, Manuel},
      author={Prengel, Christopher},
      title={Domains of convergence for monomial expansions of holomorphic
        functions in infinitely many variables},
      journal={J. Reine Angew. Math.},
      volume={634},
      date={2009},
      pages={13--49},
      issn={0075-4102},
      review={\MR{2560405 (2011b:46070)}},
      doi={10.1515/CRELLE.2009.068},
    }

  \bib{Dijkgraaf-Witten:meanfield}{article}{
   author={Dijkgraaf, Robbert},
   author={Witten, Edward},
   title={Mean field theory, topological field theory, and multi-matrix
   models},
   journal={Nuclear Phys. B},
   volume={342},
   date={1990},
   number={3},
   pages={486--522},
   issn={0550-3213},
   review={\MR{1072731 (92i:81271)}},
   doi={10.1016/0550-3213(90)90324-7},
}

    \bib{Dineen}{book}{
      author={Dineen, Se{\'a}n},
      title={Complex analysis on infinite-dimensional spaces},
      series={Springer Monographs in Mathematics},
      publisher={Springer-Verlag London Ltd.},
      place={London},
      date={1999},
      pages={xvi+543},
      isbn={1-85233-158-5},
      review={\MR{1705327 (2001a:46043)}},
    }

    \bib{Dubrovin:2DTFT}{article}{
      author={Dubrovin, Boris},
      title={Geometry of $2$D topological field theories},
      conference={
        title={Integrable systems and quantum groups},
        address={Montecatini Terme},
        date={1993},
      },
      book={
        series={Lecture Notes in Math.},
        volume={1620},
        publisher={Springer},
        place={Berlin},
      },
      date={1996},
      pages={120--348},
      review={\MR{1397274 (97d:58038)}},
      doi={10.1007/BFb0094793},
    }

    \bib{Dubrovin:Painleve}{article}{
      author={Dubrovin, Boris},
      title={Painlev\'e transcendents in two-dimensional topological field
        theory},
      conference={
        title={The Painlev\'e property},
      },
      book={
        series={CRM Ser. Math. Phys.},
        publisher={Springer},
        place={New York},
      },
      date={1999},
      pages={287--412},
      review={\MR{1713580 (2001h:53131)}},
    }
    
\bib{FJR}{article}{ 
   author={Fan, Huijun}, 
   author={Jarvis, Tyler}, 
   author={Ruan, Yongbin}, 
   title ={The Witten equation, mirror symmetry and 
           quantum singularity theory},  
   eprint={arXiv:0712.4021v3},
}

    \bib{Givental:flag}{article}{
      author={Givental, Alexander B.},
      title={Stationary phase integrals, quantum Toda lattices, flag manifolds
        and the mirror conjecture},
      conference={
        title={Topics in singularity theory},
      },
      book={
        series={Amer. Math. Soc. Transl. Ser. 2},
        volume={180},
        publisher={Amer. Math. Soc.},
        place={Providence, RI},
      },
      date={1997},
      pages={103--115},
      review={\MR{1767115 (2001d:14063)}},
    }
    
       \bib{Givental:toric}{article}{
      author={Givental, Alexander B.},
      title={A mirror theorem for toric complete intersections},
      conference={
        title={Topological field theory, primitive forms and related topics
          (Kyoto, 1996)},
      },
      book={
        series={Progr. Math.},
        volume={160},
        publisher={Birkh\"auser Boston},
        place={Boston, MA},
      },
      date={1998},
      pages={141--175},
      review={\MR{1653024 (2000a:14063)}},
    }
    
    \bib{Givental:semisimple}{article}{
      author={Givental, Alexander B.},
      title={Semisimple Frobenius structures at higher genus},
      journal={Internat. Math. Res. Notices},
      date={2001},
      number={23},
      pages={1265--1286},
      issn={1073-7928},
      review={\MR{1866444 (2003b:53092)}},
      doi={10.1155/S1073792801000605},
    }

    \bib{Givental:quantization}{article}{
      author={Givental, Alexander B.},
      title={Gromov-Witten invariants and quantization of quadratic
        Hamiltonians},
      language={English, with English and Russian summaries},
      note={Dedicated to the memory of I. G.\ Petrovskii on the occasion of his
        100th anniversary},
      journal={Mosc. Math. J.},
      volume={1},
      date={2001},
      number={4},
      pages={551--568, 645},
      issn={1609-3321},
      review={\MR{1901075 (2003j:53138)}},
    }

    \bib{Givental:An}{article}{
      author={Givental, Alexander B.},
      title={$A_{n-1}$ singularities and $n$KdV hierarchies},
      language={English, with English and Russian summaries},
      note={Dedicated to Vladimir I. Arnold on the occasion of his 65th
        birthday},
      journal={Mosc. Math. J.},
      volume={3},
      date={2003},
      number={2},
      pages={475--505, 743},
      issn={1609-3321},
      review={\MR{2025270 (2005d:14079)}},
    }
    
    \bib{Givental:symplectic}{article}{
   author={Givental, Alexander B.},
   title={Symplectic geometry of Frobenius structures},
   conference={
      title={Frobenius manifolds},
   },
   book={
      series={Aspects Math., E36},
      publisher={Vieweg},
      place={Wiesbaden},
   },
   date={2004},
   pages={91--112},
   review={\MR{2115767 (2005m:53172)}},
}

    \bib{Hamilton}{article}{
      author={Hamilton, Richard S.},
      title={The inverse function theorem of Nash and Moser},
      journal={Bull. Amer. Math. Soc. (N.S.)},
      volume={7},
      date={1982},
      number={1},
      pages={65--222},
      issn={0273-0979},
      review={\MR{656198 (83j:58014)}},
      doi={10.1090/S0273-0979-1982-15004-2},
    }
    
\bib{HMT}{article}{
   author={Hertling, C.},
   author={Manin, Yu. I.},
   author={Teleman, C.},
   title={An update on semisimple quantum cohomology and $F$-manifolds},
   journal={Tr. Mat. Inst. Steklova},
   volume={264},
   date={2009},
   number={Mnogomernaya Algebraicheskaya Geometriya},
   pages={69--76},
   issn={0371-9685},
   translation={
      journal={Proc. Steklov Inst. Math.},
      volume={264},
      date={2009},
      number={1},
      pages={62--69},
      issn={0081-5438},
   },
   review={\MR{2590836 (2011f:14094)}},
   doi={10.1134/S0081543809010088},
}

    \bib{Hori--Vafa}{article}{
      author = {Hori, Kentaro},
      author = {Vafa, Cumrun},
      title= {Mirror Symmetry},
      date={2000},
      eprint={hep-th/0002222},
    }

    \bib{Iritani:convergence}{article}{
      author={Iritani, Hiroshi},
      title={Convergence of quantum cohomology by quantum Lefschetz},
      journal={J. Reine Angew. Math.},
      volume={610},
      date={2007},
      pages={29--69},
      issn={0075-4102},
      review={\MR{2359850 (2008k:14104)}},
      doi={10.1515/CRELLE.2007.067},
    }
    
    \bib{Iritani:integral}{article}{
      author={Iritani, Hiroshi},
      title={An integral structure in quantum cohomology and mirror symmetry
        for toric orbifolds},
      journal={Adv. Math.},
      volume={222},
      date={2009},
      number={3},
      pages={1016--1079},
      issn={0001-8708},
      review={\MR{2553377 (2010j:53182)}},
      doi={10.1016/j.aim.2009.05.016},
    }
    
    \bib{Iritani:localization}{misc}{
      author={Iritani, Hiroshi},
      date={2004},
      status={unpublished},
    }
      
    \bib{Kim}{article}{
      author={Kim, Bumsig},
      title={Quantum cohomology of flag manifolds $G/B$ and quantum Toda
        lattices},
      journal={Ann. of Math. (2)},
      volume={149},
      date={1999},
      number={1},
      pages={129--148},
      issn={0003-486X},
      review={\MR{1680543 (2001c:14081)}},
      doi={10.2307/121021},
    }

    \bib{Kontsevich--Manin}{article}{
      author={Kontsevich, M.},
      author={Manin, Yu.},
      title={Gromov-Witten classes, quantum cohomology, and enumerative
        geometry},
      journal={Comm. Math. Phys.},
      volume={164},
      date={1994},
      number={3},
      pages={525--562},
      issn={0010-3616},
      review={\MR{1291244 (95i:14049)}},
    }

    \bib{Kontsevich--Manin:relations}{article}{
      author={Kontsevich, M.},
      author={Manin, Yu.},
      title={Relations between the correlators of the topological sigma-model
        coupled to gravity},
      journal={Comm. Math. Phys.},
      volume={196},
      date={1998},
      number={2},
      pages={385--398},
      issn={0010-3616},
      review={\MR{1645019 (99k:14040)}},
      doi={10.1007/s002200050426},
    }

    \bib{Kostant}{article}{
      author={Kostant, Bertram},
      title={Flag manifold quantum cohomology, the Toda lattice, and the
        representation with highest weight $\rho$},
      journal={Selecta Math. (N.S.)},
      volume={2},
      date={1996},
      number={1},
      pages={43--91},
      issn={1022-1824},
      review={\MR{1403352 (97e:17029)}},
      doi={10.1007/BF01587939},
    }
    
    \bib{Li--Tian}{article}{
      author={Li, Jun},
      author={Tian, Gang},
      title={Virtual moduli cycles and Gromov-Witten invariants of algebraic
        varieties},
      journal={J. Amer. Math. Soc.},
      volume={11},
      date={1998},
      number={1},
      pages={119--174},
      issn={0894-0347},
      review={\MR{1467172 (99d:14011)}},
      doi={10.1090/S0894-0347-98-00250-1},
    }

    \bib{Manin}{book}{
      author={Manin, Yuri I.},
      title={Frobenius manifolds, quantum cohomology, and moduli spaces},
      series={American Mathematical Society Colloquium Publications},
      volume={47},
      publisher={American Mathematical Society},
      place={Providence, RI},
      date={1999},
      pages={xiv+303},
      isbn={0-8218-1917-8},
      review={\MR{1702284 (2001g:53156)}},
    }
    
    \bib{Milanov--Tseng}{article}{
      author={Milanov, Todor E.},
      author={Tseng, Hsian-Hua},
      title={Equivariant orbifold structures on the projective line and
        integrable hierarchies},
      journal={Adv. Math.},
      volume={226},
      date={2011},
      number={1},
      pages={641--672},
      issn={0001-8708},
      review={\MR{2735771}},
      doi={10.1016/j.aim.2010.07.004},
    }

    \bib{Reichelt}{article}{
      author={Reichelt, Thomas},
      title={A construction of Frobenius manifolds with logarithmic poles and
        applications},
      journal={Comm. Math. Phys.},
      volume={287},
      date={2009},
      number={3},
      pages={1145--1187},
      issn={0010-3616},
      review={\MR{2486676 (2009m:53232)}},
      doi={10.1007/s00220-008-0699-7},
    }

    \bib{Teleman}{misc}{
      author={Teleman, Constantin},
      title={The Structure of 2D Semi-simple Field Theories},
      date={2010-02-08},
      status={preprint},
    }

    \bib{Witten:algebraic}{article}{
      author={Witten, Edward},
      title={Algebraic geometry associated with matrix models of
        two-dimensional gravity},
      conference={
        title={Topological methods in modern mathematics},
        address={Stony Brook, NY},
        date={1991},
      },
      book={
        publisher={Publish or Perish},
        place={Houston, TX},
      },
      date={1993},
      pages={235--269},
      review={\MR{1215968 (94c:32012)}},
    }

  \end{biblist}
\end{bibdiv}

\end{document}